\newcommand{\red}{{}}
\DeclareMathOperator{\dm}{dm}
\theoremstyle{plain}
\newtheorem{theorem}{Theorem}[section]
\newtheorem{lemma}[theorem]{Lemma}
\newtheorem{corollary}[theorem]{Corollary}
\newtheorem{proposition}[theorem]{Proposition}
\theoremstyle{definition}
\newtheorem{definition}[theorem]{Definition}
\theoremstyle{remark}
\newtheorem{remark}[theorem]{Remark}
\newtheorem{example}[theorem]{Example}
\newtheorem*{jjj}{Remark}
\newtheorem*{remark111}{Remark 1.1.1}
\newtheorem*{141}{Remark 1.4.1}
\DeclareSymbolFont{AMSb}{U}{msb}{m}{n}
\DeclareMathSymbol{\N}{\mathalpha}{AMSb}{"4E}
\DeclareMathSymbol{\R}{\mathalpha}{AMSb}{"52}
\DeclareMathSymbol{\Z}{\mathalpha}{AMSb}{"5A}
\DeclareMathSymbol{\D}{\mathalpha}{AMSb}{"44}
\DeclareMathSymbol{\s}{\mathalpha}{AMSb}{"53}
\DeclareMathOperator{\OptGeo}{OptGeo}
\newcommand{\Y}{{\red Y}}
\DeclareMathOperator{\sgn}{sgn}
\DeclareMathOperator{\lip}{Lip}
\DeclareMathOperator{\supp}{supp}
\DeclareMathOperator{\m}{m}
\DeclareMathOperator{\de}{d}
\DeclareMathOperator{\ric}{ric}
\DeclareMathOperator{\diam}{diam}
\DeclareMathOperator{\Hess}{Hess}
\newcommand{\T}{\mathcal{T}}
\DeclareMathOperator{\sign}{sign}
\newcommand{\CD}{\mathrm{CD}}
\newcommand{\RCD}{\mathrm{RCD}}
\title[Rigidity of mean convex subsets in nonnegatively curved spaces]{Rigidity of  mean convex subsets in non-negatively curved $\RCD$ spaces and stability of  mean curvature bounds}\thanks{\textit{2010 Mathematics Subject classification}. Primary 53C21 30L99, Keywords: curvature-dimension condition, mean curvature, optimal transport, comparison geometry}
\author{Christian Ketterer}
\address{University of Freiburg, Mathematical Institute, Ernst-Zermelo-Str. 1, 79104, Germany}
\email{christian.ketterer@math.uni-freiburg.de}
\begin{document}
\maketitle
\begin{abstract} 
We prove splitting theorems for mean convex open subsets in $\RCD$ (Riemannian curvature-dimension) spaces that extend results by Kasue, Croke and Kleiner for Riemannian manifolds with boundary to a non-smooth setting.  A corollary is for instance Frankel's theorem. Then, we prove that our notion of mean curvature bounded from below for the boundary of an open subset is stable w.r.t. to uniform convergence of the corresponding boundary distance function. We apply this to prove  almost rigidity theorems for uniform domains whose boundary has a lower mean curvature bound. 
\end{abstract}
\tableofcontents
\section{Introduction}
By the Cheeger-Gromoll splitting theorem a Riemannian manifold with non-negative Ricci curvature which contains a geodesic line splits off a factor $\R$. In \cite{Kasue83} Kasue proved a version of  this result in the presence of boundary components: A Riemannian manifold with mean convex and compact boundary and nonnegative Ricci curvature that contains a geodesic ray with initial point in the boundary splits off  $[0, \infty)$. Kasue also proved that a  Riemannian manifold with more than one compact mean convex boundary component and non-negative Ricci curvature is isometric to a product $[0,D]\times N$. In particular, there are exactly two boundary components and the mean curvature vanishes. Croke and Kleiner \cite{crokekleiner} showed that this is the special case of a more general splitting principle for Riemannian manifolds with boundary. Generalisations for Bakry-Emery Ricci curvature bounds have been obtained by Sakurai \cite{sakurai} and Moore-Woolgar \cite{mw}.

In this article one of our main  goals is to generalize Kasue's rigidity theorems to the nonsmooth context of $\RCD$ spaces. The latter is the celebrated synthetic notion of Ricci curvature bounded from below for metric measure spaces.  The class of $\RCD$ spaces includes Riemannian manifolds with convex boundary. However Riemannian manifolds that admit boundary with only mean curvature bounded from below are in general not in this class: In the presence of boundary components the interior of a Riemannian manifold may not be geodesically convex and therefore will not satisfy any $\RCD(K,N)$ condition. 
Hence,  for a generalization of Kasue's theorem we consider open subsets inside $\RCD$ spaces whose boundary admits a lower mean curvature bound in a generalized sense. 

In \cite{kettererHK} and in \cite{bkmw}  synthetic notions of lower mean curvature bounds for an open subset $\Omega$ inside an $\RCD$ space $(X,\de,\m)$ were introduced. A similar definition of lower mean curvature bounds in the context of Lorentzian length spaces with synthetic lower Ricci curvature bounds was used in \cite{CavallettiMondino20+}.  Geometric consequences that were derived in \cite{bkmw} are estimates on the inscribed radius of $\Omega$ and rigidity theorems for the corresponding equality cases. 
One of the key steps in the proof of these rigidity theorems is a  comparison estimate for the  Laplacian of the boundary distance function $\de_{\Omega^c}= \inf_{y\in \Omega^c} \de(y, \cdot)$ 
 \cite[Corollary 4.11]{bkmw}:
\begin{align}\label{ineq:laplesti}
{\bf \Delta}_{\Omega} (-\de_{\Omega^c})\geq - (N-1) \frac{s'_{\frac{K}{N-1}, \frac{H}{N-1}}(\de_{\Omega^c})}{s_{\frac{K}{N-1}, \frac{H}{N-1}}(\de_{\Omega^c})}\m|_{\Omega}.
\end{align}
Here ${\bf \Delta}_{\Omega}$ is the distributional Laplacian in $\Omega$, $\m|_\Omega$ is the reference measure $\m$ restricted to $\Omega$, $H$ is the synthetic lower mean curvature bound and $$s_{\frac{K}{N-1}, \frac{H}{H-1}}(r)=\cos\left(\scriptstyle{\sqrt{\frac{K}{N-1}}} r\right)- \textstyle{\frac{H}{N-1}}\sin\left(\scriptstyle{\sqrt{\frac{K}{N-1}}} r\right)$$
for $K>0$ and appropriately modified for $K\leq 0$.
In particular, for $K=0$ and $H=\delta(N-1)$ \eqref{ineq:laplesti} becomes $${\bf \Delta}_{\Omega}(-\de_{\Omega^c})\geq {\delta}{(1-\delta\de_{\Omega^c})^{-1}}$$ and by one of the results in \cite{bkmw} one has $\de_{\Omega^c}\leq \frac{1}{\delta}$.
Moreover in \cite{ms21} this Laplace estimate for $H=0$ was  derived for perimeter minimizing sets of finite perimeter in an $\RCD$ space.

In Section \ref{subsec:meancurvature} we will show that under general assumptions on $\partial \Omega$ the Laplace estimate \eqref{ineq:laplesti} is equivalent to the notions of mean curvature bounded from below used in \cite{kettererHK, bkmw}. This is well-known for Riemannian manifolds and justifies the following definition. We will say that the boundary of a general open  subset $\Omega\neq \emptyset$ inside some $\RCD(K,N)$ space $(X,\de,\m)$ has \textit{Laplace mean curvature bounded from below by $H\in \mathbb R$} if the corresponding distance function to the complement $\de_{\Omega^c}$ satisfies \eqref{ineq:laplesti}.
The advantage of this notion for lower mean curvature bounds is that it will work for all open subets $\Omega$ in $\RCD$ spaces without any other a priori assumptions on $\partial \Omega$. Moreover it has nice stability properties.

The first  result of this paper is the following theorem. 
\begin{theorem}\label{main1} 
Let $X$ be an $\RCD(0,N)$ space for $N\geq 1$, and  let $\Omega_\alpha\subset X$, $\alpha=1,\dots, m$ with $m\geq 2$, be open and connected such that $\Omega^c_\alpha\neq \emptyset$ and $\Omega_\alpha^c \cap \Omega_\beta^c= \emptyset$ for $\alpha\neq \beta$. Assume $\partial \Omega_\alpha$  has Laplace mean curvature bounded from below by $0$ for every $\alpha$ and assume that $\partial \Omega_2$ is compact.

Then,  $m=2$ and there exists a metric measure space space $Y$ such that $(\tilde \Omega, \tilde \de_{\Omega}, \m|_{\Omega})$ is isomorphic to $[0, D]\otimes Y$ where $D:=\inf_{x\in \Omega_1^c, y\in \Omega_2^c}\de_X(x, y)$ and $\Omega=\Omega_1\cap \Omega_2$. If $N\geq 2$, then $Y$ is $\RCD(0, N-1)$. If $N\in [1,2)$, then $Y\simeq \{pt\}$.
\end{theorem}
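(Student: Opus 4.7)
The plan is to mimic the classical Kasue--Croke--Kleiner splitting strategy, adapted to the $\RCD$ setting using \eqref{ineq:laplesti} as the main analytic input, the strong minimum principle, and a boundary version of Gigli's splitting theorem.

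Setting $b_\alpha := \de_{\Omega_\alpha^c}$ and specializing \eqref{ineq:laplesti} to $K=H=0$, the estimate becomes ${\bf \Delta}_{\Omega_\alpha}(-b_\alpha)\ge 0$, so each $b_\alpha$ is distributionally superharmonic on $\Omega_\alpha$. I would then consider
\[
u := b_1 + b_2 \quad \text{on}\quad \tilde \Omega := \Omega_1 \cap \Omega_2,
\]
which is superharmonic as a sum and satisfies $u \ge D$ by the triangle inequality. To locate an interior minimum, I would exploit $\Omega_1^c\cap\Omega_2^c=\emptyset$ (forcing $\Omega_1^c\subset\Omega_2$) together with compactness of $\partial\Omega_2$ and properness of $X$ to find $x_*\in\Omega_1^c$ and $y_*\in\partial\Omega_2$ with $\de(x_*,y_*)=D$, joined by a minimizing geodesic $\gamma:[0,D]\to X$. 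A short argument using minimality and disjointness of the complements shows $\gamma((0,D))\subset\tilde\Omega$ and $u\circ\gamma\equiv D$.

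With this, the strong minimum principle for distributionally superharmonic functions in $\RCD$ spaces forces $u\equiv D$ on the connected component of $\tilde\Omega$ containing $\gamma$; by connectedness of the $\Omega_\alpha$ and iteration, $u\equiv D$ on all of $\tilde\Omega$. Rigidity in $b_1 + b_2 = D$ with each summand superharmonic then upgrades both $b_\alpha$ to being harmonic on $\tilde\Omega$, and $|\nabla b_\alpha|=1$ almost everywhere since $b_\alpha$ is a distance function. This puts one in position to apply an adaptation of Gigli's splitting theorem to the open subset $\tilde\Omega$ with $b_1$ as the Busemann-type function: the Bochner identity forces $\Hess b_1\equiv 0$, and the gradient flow of $b_1$ delivers the isomorphism $\tilde\Omega\cong[0,D]\otimes Y$ with $Y := b_1^{-1}(D/2)$ inheriting the $\RCD(0,N-1)$ condition.

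To rule out $m\ge 3$: any additional $\Omega_3^c$ would sit in the interior of the product $[0,D]\otimes Y$ (being disjoint from $\Omega_1^c$ and $\Omega_2^c$, which occupy the endpoint slices). Either re-running the splitting argument on the pair $(\Omega_1,\Omega_3)$ yields an incompatible second splitting, or one computes directly on the product: near a point of $\Omega_3^c$, the distance $\de_{\Omega_3^c}$ has ${\bf \Delta}(-\de_{\Omega_3^c})<0$, contradicting \eqref{ineq:laplesti} for $\Omega_3$.

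The main obstacle will be the splitting step. Gigli's theorem is formulated for a globally defined Busemann function associated to a geodesic line in the ambient $\RCD$ space, whereas $b_1$ is only defined on the open subset $\tilde\Omega$, and its behaviour must be tracked at $\partial\tilde\Omega$ in order to identify the endpoint slices $\{0\}\otimes Y$ and $\{D\}\otimes Y$ with the corresponding parts of $\Omega_1^c$ and $\Omega_2^c$. Verifying the Bochner calculus, the Sobolev-to-Lipschitz property, and measurability of the gradient flow on $\tilde\Omega$ --- presumably by combining the analytic framework of \cite{bkmw,kettererHK} with Gigli's arguments --- is where the principal technical content lies.
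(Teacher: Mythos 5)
Your proposal follows essentially the same strategy as the paper: use the $K=H=0$ specialization of the Laplace estimate \eqref{ineq:laplesti} to get sub-/super-harmonicity of the boundary distance functions, add two of them to obtain a function that attains its extremum $D$ along a minimizing geodesic between $\partial\Omega_1$ and $\partial\Omega_2$, invoke the strong maximum principle (Theorem \ref{thm:mp}) to force the sum constant, deduce that both distance functions are harmonic with unit gradient, and then split. The paper packages the ``sum is constant, hence both harmonic'' step as Lemma \ref{lem:harmonic}, and for the splitting it does not re-derive a boundary Gigli-type theorem from scratch but cites the vanishing Hessian result (Theorem \ref{th:hessian}, from \cite{kkl}) and then the product construction carried out in Sections 5--6 of \cite{kkl} --- precisely the technical work you flag as ``the main obstacle.'' The only genuine divergence is the $m\ge 3$ exclusion: the paper derives from harmonicity of $u_1,u_2,u_3$ the pointwise relation $u_1-u_3=D_{2,3}-D_{1,2}$ on $\Omega_1\cap\Omega_2\cap\Omega_3$ and reads off a point of $\partial\Omega_1\cap\overline{\Omega_3^c}$, contradicting $\Omega_1^c\cap\Omega_3^c=\emptyset$; this is a bit cleaner than re-running the splitting on the pair $(\Omega_1,\Omega_3)$ or computing on the product, but both routes work and stem from the same observation.
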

The distance $\tilde \de_\Omega$ is the completion of the  induced intrinsic distance on $\Omega$ and $(\tilde \Omega, \tilde \de_\Omega, \m|_{\Omega})$ is the corresponding metric measure space. 
\begin{remark111} We emphasize that $\tilde \de_\Omega$ cannot be replaced with $\de_X|_\Omega$. A simple counterexample is the $\RCD(0,2)$ space $X$ that is constructed by gluing two copies of a disk $\overline B_1(0)=D$  to the ends of the  cylinder $\mathbb S^1\times [0,1]$. For two points in $\mathbb S^1\times (0,1)=: \Omega$ that are close to $\mathbb S^1 \times \{0\}$ the shortest path w.r.t. $\de_X$ goes through $D$. But $\Omega$ splits w.r.t. the intrinsic distance.
\end{remark111}
As a corollary we obtain 
\begin{corollary}\label{cor1}
Let $X$ be a compact $\RCD(0,N)$ space with $N\geq  2$. There are no open, connected subsets $\Omega_1$ and $\Omega_2$  such that  $\partial \Omega_1$ and $\partial \Omega_2$ are disjoint and have Laplace mean curvature bounded from below by $\delta>0$. 
\end{corollary}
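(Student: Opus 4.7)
The plan is to derive Corollary~\ref{cor1} by contradiction from Theorem~\ref{main1}, exploiting the fact that the Laplacian of the interval coordinate vanishes on a product $[0,D]\otimes Y$ while the strict mean curvature hypothesis forces it to be strictly positive. As in Theorem~\ref{main1}, I read ``disjoint'' here as $\Omega_1^c\cap\Omega_2^c=\emptyset$, i.e.\ $\Omega_1\cup\Omega_2=X$; otherwise two small disjoint geodesic balls in a flat torus would already give an obvious counterexample. Suppose such $\Omega_1,\Omega_2$ exist. Compactness of $X$ forces compactness of $\partial\Omega_2$, and the strict bound $\delta>0$ a fortiori gives the mean curvature lower bound $\geq 0$, so all hypotheses of Theorem~\ref{main1} are in place. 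Applying it produces an $\RCD(0,N-1)$ space $Y$ and an isomorphism $(\tilde\Omega,\tilde\de_\Omega,\m_\Omega)\cong[0,D]\otimes Y$ with $D=\inf_{x\in\Omega_1^c,\,y\in\Omega_2^c}\de_X(x,y)\in(0,\infty)$.

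Under this isomorphism, $\Omega_1^c$ and $\Omega_2^c$ correspond to the fibers $\{0\}\times Y$ and $\{D\}\times Y$, the intersection $\Omega_1\cap\Omega_2$ to the open slab $(0,D)\times Y$, and the boundary-distance function $\de_{\Omega_1^c}$ to the first (interval) coordinate $r$ on $[0,D]$. On the product, the distributional Laplacian of $r$ vanishes on the open slab, so ${\bf\Delta}(-r)\equiv 0$ there. On the other hand, specialising \eqref{ineq:laplesti} to $K=0$ and $H=\delta(N-1)$ as recalled in the introduction yields
$$
{\bf\Delta}_{\Omega_1}(-\de_{\Omega_1^c})\;\geq\;\frac{\delta}{1-\delta\,\de_{\Omega_1^c}}\,\m|_{\Omega_1}
$$
on $\Omega_1$, and since $\Omega_1\cap\Omega_2\subset\Omega_1$, the right-hand side is strictly positive on the slab. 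This contradicts the vanishing of ${\bf\Delta}(-r)$ there, and the corollary follows.

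The main thing to verify is that the boundary-distance function and the distributional Laplacian transform as claimed under the splitting isomorphism. The distance identification should come from the construction of the splitting (namely that $\tilde\de_\Omega$ recovers the distance to $\Omega_\alpha^c$ on $\tilde\Omega$, and that under the product structure $\Omega_1^c$ is the $r=0$ fiber), and the distributional Laplacian is a metric measure invariant, so both points should reduce to unpacking the definitions from Section~\ref{subsec:meancurvature}. Once these identifications are in hand, the contradiction is immediate.
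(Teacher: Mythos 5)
Your proposal is correct and follows essentially the paper's own route: apply Theorem \ref{main1} (after observing, as you do, that ``disjoint'' must be read as $\Omega_1^c\cap\Omega_2^c=\emptyset$ and that compactness of $X$ supplies compactness of $\partial\Omega_2$), and then contradict the vanishing of ${\bf \Delta}_{\Omega}(-\de_{\Omega_1^c})$ on the slab with the strictly positive lower bound that the hypothesis $H=\delta>0$ forces near $\partial\Omega_1$. The only real difference is that the paper extracts ${\bf \Delta}_{\Omega}(-\de_{\Omega^c})=0$ directly from the harmonicity established in Lemma \ref{lem:harmonic} in the course of proving Theorem \ref{main1}, so the identification of $\de_{\Omega_1^c}$ with the interval coordinate of $[0,D]\otimes Y$ (and the invariance of the distributional Laplacian under the splitting isomorphism), which you flag as ``the main thing to verify,'' can be bypassed entirely.
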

The corollary can be seen as a mean curvature version in context of $\RCD$ spaces of the non-existence result of positive scalar curvature metrics on a torus by Schoen-Yau-Gromov-Lawson \cite{schoenyaustructure, schoenyau79, Gromov-Lawson-1980}.

Another corollary is a Frankel-type theorem for mean convex subsets in positively curved $\RCD$ spaces. 
\begin{corollary}\label{cor2}
Let $X$ be an $\RCD(\delta, N)$ space for $\delta>0$ and $N\geq 2$. Let $\Omega_1$ and $\Omega_2$ be open  connected subsets in $X$ such that $\partial \Omega_1$ and $\partial \Omega_2$ are Laplace mean convex. 
Then $ \Omega^c_1\cap  \Omega_2^c\neq \emptyset$.
\end{corollary}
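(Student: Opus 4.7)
The plan is to argue by contradiction. Suppose $\Omega_1^c\cap\Omega_2^c=\emptyset$. By the generalized Bonnet-Myers theorem, an $\RCD(\delta,N)$ space with $\delta>0$ is compact, so $\Omega_1^c$ and $\Omega_2^c$ are disjoint compact sets with
$D:=\de_X(\Omega_1^c,\Omega_2^c)>0$ realized by some $p\in\Omega_1^c$, $q\in\Omega_2^c$ joined by a minimizing unit-speed geodesic $\gamma:[0,D]\to X$.

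The central auxiliary function is $u:=\de_{\Omega_1^c}+\de_{\Omega_2^c}$. The triangle inequality gives $u\geq D$ on $X$, while $u(\gamma(t))=D$ for all $t\in[0,D]$, so $u$ attains its global minimum along $\gamma$. If $\gamma(t)\in\Omega_1^c$ for some $t\in(0,D)$, then $D\leq\de_X(\gamma(t),q)=D-t<D$, a contradiction; symmetrically $\gamma(t)\notin\Omega_2^c$. Hence $\gamma((0,D))\subset\Omega:=\Omega_1\cap\Omega_2$, and at each such point $\de_{\Omega_i^c}>0$ for $i=1,2$.

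Next I would apply the Laplace comparison \eqref{ineq:laplesti} with $K=\delta$, $H=0$, so that $s_{\delta/(N-1),0}(r)=\cos(\sqrt{\delta/(N-1)}\,r)$; on each $\Omega_i$ this reads
\begin{equation*}
\mathbf{\Delta}_{\Omega_i}(-\de_{\Omega_i^c})\;\geq\;\sqrt{\delta(N-1)}\,\tan\!\left(\sqrt{\tfrac{\delta}{N-1}}\,\de_{\Omega_i^c}\right)\m|_{\Omega_i},
\end{equation*}
whose right-hand side is strictly positive wherever $\de_{\Omega_i^c}>0$. Restricting both inequalities from $\Omega_i$ to the smaller open set $\Omega$ and summing, linearity of the distributional Laplacian yields $\mathbf{\Delta}_\Omega u\leq-\rho\,\m|_\Omega$ for a continuous function $\rho>0$ on $\Omega$.

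Finally, $u$ attains its global minimum on $\gamma((0,D))\subset\Omega$, so the strong minimum principle for distributionally superharmonic functions on $\RCD$ spaces forces $u$ to be locally constant on a neighborhood of $\gamma(D/2)$. Constancy would make $\mathbf{\Delta}_\Omega u$ vanish there, contradicting the strict negativity above. Hence $\Omega_1^c\cap\Omega_2^c\neq\emptyset$. The main technical obstacle is to invoke the strong minimum principle in this nonsmooth distributional framework and to verify that restricting the two Laplace comparisons from $\Omega_i$ to $\Omega$ preserves the inequalities; both are by now standard for the distributional Laplacian $\mathbf{\Delta}_\Omega$ used in \eqref{ineq:laplesti}, but the conventions must be matched carefully.
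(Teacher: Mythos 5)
Your argument is correct, but it follows a genuinely different route from the paper's. The paper deduces Corollary \ref{cor2} from the full splitting Theorem \ref{main1}: using $\RCD(\delta,N)\Rightarrow\RCD(0,N)$ it obtains the product structure $\tilde\Omega\simeq[0,D]\otimes Y$ and then contradicts the strict positivity of the curvature. Your proof is more elementary and self-contained: it bypasses the splitting machinery entirely and derives the contradiction directly from the strict-positivity of the Laplace comparison combined with the strong maximum principle. In effect you are replaying, in this special case, the core computation of Lemma \ref{lem:harmonic} (which underlies Theorem \ref{main1}), but the short-circuit is clean and worth noting since it avoids invoking the Hessian/rigidity results of \cite{kkl}.

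Two small points to tighten. First, the strong maximum principle (Theorem \ref{thm:mp}) is formulated for a \emph{connected} open set with compact closure, and $\Omega=\Omega_1\cap\Omega_2$ need not be connected; you should apply it on the connected component of $\Omega$ that contains $\gamma(D/2)$, which is enough to reach the contradiction. Second, you implicitly need $\sqrt{\delta/(N-1)}\,\de_{\Omega_i^c}<\pi/2$ near $\gamma(D/2)$ so that the comparison function $s_{\delta/(N-1),0}=\cos\bigl(\sqrt{\delta/(N-1)}\,\cdot\bigr)$ is positive and the right-hand side of the Laplace comparison is finite and strictly positive. This follows from the inscribed-radius bound for Laplace-mean-convex domains (as quoted in the introduction from \cite{bkmw}): the in-radius of $\Omega_1$ is at least $D=\de_{\Omega_1^c}(q)$, so $D\le \frac{\pi}{2}\sqrt{(N-1)/\delta}$, and hence $\de_{\Omega_i^c}(\gamma(D/2))=D/2<\frac{\pi}{2}\sqrt{(N-1)/\delta}$. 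With these two remarks your argument is complete.
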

{The proof that is presented in Section \ref{subsec:isometric} is close to a proof in the Riemannian setting (see \cite{pewifrankel}). A similar result appears in \cite{ms21} for perimeter minimizing sets.}

Putting the boundary  of $\Omega_2$ at infinity in Theorem \ref{main1}, we also get the following theorem.
\begin{theorem}\label{main2}
Let $X$ be an $\RCD(0,N)$ space with $N\geq 1$ and let $\Omega\subset X$ be open and connected with mean curvature bounded from below by $0$. Assume there exists a geodesic ray $\gamma: (0, \infty) \rightarrow \Omega$ with $\lim_{r\downarrow 0} \gamma(r)=x_0\in \partial \Omega\neq \emptyset$ and $\de_X(x_0, \gamma(r))= \de_{\Omega^c}(\gamma(r))$.

Then, there exists a metric measure space $Y$ such that $(\tilde \Omega, \tilde \de_\Omega, \m_{\Omega})$ is isomorphic to $[0,\infty) \otimes Y$. If $N\geq 2$, then $Y$ is $\RCD(0,N-1)$. If $N\in [1,2)$, then $Y\simeq\{pt\}$. 
\end{theorem}
\begin{141}
The assumption $\de_X(x_0, \gamma(r))= \de_{\Omega^c}(\gamma(r))$ for the geodesic ray $\gamma$ cannot be omitted. A counterexample is $X=\R^2$ with $\Omega= \{ (x,y): y=x^2\}$. 
\end{141}
{The proof of Theorem \ref{main1} has two parts.  In Section \ref{subsec:meas} we show that $\Omega$ equipped with the reference measure $\m$ restricted to $\Omega$ splits as measure space.  In Section \ref{subsec:isometric} we  then see that  this implies an isometric splitting for the induced intrinsic geometry of $\Omega$. This part  applies methods developped in \cite{kkl} and we omit details since the steps are  identical with the ones in \cite{kkl}. The proof of Theorem \ref{main2} follows the same roadmap with obvious modifications where we only provide the details of the first part. }

These rigidity results  raise the question for corresponding  almost rigidity theorems: given a Riemannian manifold that satisfies the assumption of the  theorems up to an error $\epsilon$ are we close (and in which sense) to the rigidity case? In absence of extrinsic boundary, that is $\Omega= X$,  these questions can be  answered by $\RCD$ rigidity theorems, stability of $\RCD$ curvature bounds w.r.t. measured Gromov-Hausdorff convergence and Gromov's precompactness theorem. 

For domains with lower mean curvature bounds inside of a Riemannian manifold with Ricci curvature bounded from below the problem is more delicate \cite{peralesheka, wong}.
%
A sequence of closed domains may not subconverge in Gromov Hausdorff sense to a metric space. This behavior is  similar to the one of closed Riemannian manifolds with lower scalar curvature bounds (for instance, see \cite{sormanisurvey, gromovmean}). 
%

Our solution to this problem is as follows.
Since we study spaces with boundary as subsets of $\RCD$ spaces, we consider the function $\de_{\Omega^c}$ that is  $1$-Lipschitz. Then we can apply Gromov's Arzela-Ascoli theorem as a  compactness theorem for this framework. For a family of $\RCD(K,N)$ spaces $X_i$ together with functions $\de_{\Omega_i^c}$ one obtains a subsequence of metric measure spaces and distance functions that converge in measured Gromov-Hausdorff sense and uniformily, respectively, to a $1$-Lipschitz function $\de_{\Omega^c}$ on a limit $\RCD$ space $X$. To quantify uniform convergence we introduce the uniform distance between continuous functions (Definition \ref{def:ud}). Applied to  distance functions to the boundary of subsets $\Omega$ and $\Omega'$ in $X$ and in $Y$, respectively, one can define a distance $\mathcal D(\Omega, \Omega')$.
Moreover Laplace mean curvature bounds are preserved under this convergence (Theorem \ref{th:meancurvaturestability}).  {The latter is essentially known to experts. For instance, in  \cite{bns} the authors prove a sharp Laplace mean curvature bound for the distance function of the intrinsic boundary of  Ricci limit spaces. }

{These notions yield a compactness statement for pairs $(X,\Omega)$ (Corollary \ref{cor:com})}, and our almost rigidity theorem in the class of subsets in smooth Riemannian manifolds reads as follows.
\begin{theorem}\label{main3} Let $L, c, C, \Gamma \in \R_+$, $N\geq 2$ and $m\in \mathbb N \backslash \{1\}$. For every $\epsilon >0$ there exists $\delta>0$ such that the following holds. 

Let $M$ be a Riemannian manifold with $\ric_M\geq -\delta$, $\dim_M\leq N$ and  $\diam_M\leq L$ and let $\Omega_\alpha\subset X$, $\alpha=1, \dots, m$,  be open subsets with smooth boundary $\partial \Omega_\alpha$ such that $\Omega_\alpha$ is $(c,C)$-uniform, $\partial \Omega_\alpha$ has mean curvature bounded from below by $-\delta$ and $\inf_{x\in \partial \Omega_\alpha, y\in\partial\Omega_\beta}\de_M(x,y)\geq \Gamma>0$ for  $\alpha\neq \beta$. 

Then, $m=2$ and there exist an $\RCD(0, N)$ space $Z$, an $\RCD(0,N-1)$ space $Y$ and  an open subset $\Omega'\subset Z$ such that
$(\tilde{\Omega}', \tilde \de_{\Omega'}, \m_Z|_{\Omega'}) \simeq Y\otimes [0, D]$ for some $D>0$ and 
\begin{align*}
\mathbb D(X, Z)\leq \epsilon \ \ \ \mbox{ and } \ \ \ \ \mathcal D({\Omega}_1\cap \Omega_2,  {\Omega'})\leq \epsilon.
\end{align*}
\end{theorem}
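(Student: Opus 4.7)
The strategy is a proof by contradiction leveraging the compactness/stability package developed earlier in the paper: Gromov's precompactness for $\RCD$ spaces, Arzelà--Ascoli compactness for $1$-Lipschitz boundary distance functions, the stability of Laplace mean curvature bounds (Theorem \ref{th:meancurvaturestability}), and the rigidity result Theorem \ref{main1} in the limit. Suppose the conclusion fails for some $\epsilon_0 > 0$: then there exist a sequence $\delta_i \downarrow 0$, Riemannian manifolds $M_i$ and open subsets $\Omega_{\alpha,i} \subset M_i$ satisfying all hypotheses with $\delta = \delta_i$, such that no admissible triple $(Z, Y, \Omega')$ approximates $(M_i, \Omega_{1,i} \cap \Omega_{2,i})$ within $\epsilon_0$ in the $(\mathbb D, \mathcal D)$-sense. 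The plan is to extract a suitable limit of this sequence, check the hypotheses of Theorem \ref{main1} in that limit, and derive a contradiction.

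From the uniform volume, diameter, and Ricci bounds together with $\delta_i \to 0$, Gromov's precompactness combined with stability of $\RCD(K,N)$ under measured Gromov--Hausdorff convergence yields a subsequence along which $M_i$ converges in mGH sense to a compact $\RCD(0,N)$ space $Z$. On a further diagonal subsequence in $\alpha$, the $1$-Lipschitz functions $\de_{\Omega_{\alpha,i}^c}$, uniformly bounded by $L$, converge in the uniform sense of Definition \ref{def:ud} to $1$-Lipschitz limits $f_\alpha \colon Z \to [0, L]$. Setting $\Omega'_\alpha := \{ f_\alpha > 0\}$, a nearest-point extraction argument---pick $z_i \to z$, take $w_i \in \Omega_{\alpha,i}^c$ realising $\de(z_i, w_i) = \de_{\Omega_{\alpha,i}^c}(z_i)$, and extract a convergent subsequence $w_i \to w \in \{ f_\alpha = 0\}$---identifies $f_\alpha = \de_{(\Omega'_\alpha)^c}$ on $Z$.

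By Theorem \ref{th:meancurvaturestability}, each $\partial \Omega'_\alpha$ inherits a Laplace mean curvature lower bound of $0$ in $Z$. The $(c,C)$-uniformity of the $\Omega_{\alpha,i}$, passed to the limit, ensures that each $\Omega'_\alpha$ is open and connected, with compact nonempty boundary and nonempty complement; the boundary-separation hypothesis passes to $\inf_{x \in \partial \Omega'_\alpha, y \in \partial \Omega'_\beta} \de_Z(x,y) \geq \Gamma$ for $\alpha \neq \beta$, and, combined with the uniform-domain structure (which precludes nested complements separated by a positive distance), yields $(\Omega'_\alpha)^c \cap (\Omega'_\beta)^c = \emptyset$. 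Theorem \ref{main1} then applies in $Z$ to the family $\{ \Omega'_\alpha \}$, forcing $m = 2$ and producing an $\RCD(0, N-1)$ space $Y$ together with some $D > 0$ such that $(\widetilde{\Omega'_1 \cap \Omega'_2}, \widetilde \de, \m_Z|_{\Omega'_1 \cap \Omega'_2}) \simeq Y \otimes [0, D]$. For $i$ large one has $\mathbb D(M_i, Z) < \epsilon_0$ and $\mathcal D(\Omega_{1,i} \cap \Omega_{2,i}, \Omega'_1 \cap \Omega'_2) < \epsilon_0$, contradicting the failure assumption.

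The main obstacle is verifying that the $(c,C)$-uniformity passes to the limit in a form strong enough both to make $\widetilde \Omega'_\alpha$ a nondegenerate metric measure space (so that $\mathcal D$ is applicable) and to preclude nested complements, since uniformity is not manifestly closed under mGH convergence and the exclusion of the nested configuration needs a quantitative argument using the uniform convergence of boundary distances. A secondary subtlety is the compatibility of $\mathcal D$ with intersections: one must verify that joint uniform convergence $\de_{\Omega_{1,i}^c} \to f_1$ and $\de_{\Omega_{2,i}^c} \to f_2$ indeed delivers $\Omega'_1 \cap \Omega'_2$ as the $\mathcal D$-limit of the $\Omega_{1,i} \cap \Omega_{2,i}$, i.e.\ that the boundary distance of an intersection can be controlled by the two individual boundary distances.
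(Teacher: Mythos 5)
Your argument reproduces the paper's proof in both structure and content: contradiction via a sequence of $\RCD(-1/i,N)$ spaces (the Riemannian case being absorbed into this class), passage to an mGH limit $Z$, Arzel\`a--Ascoli extraction of uniform limits of the boundary distance functions (essentially Lemma \ref{lem:AA}), stability of the Laplace mean curvature bound via Theorem \ref{th:meancurvaturestability}, persistence of $(c,C)$-uniformity, and then Lemma \ref{lem:harmonic}/Theorem \ref{main1} applied in $Z$. The first ``obstacle'' you flag at the end is in fact already disposed of in the paper: Lemma \ref{lem:uniformdomain} proves that $(c,C)$-uniformity survives the mGH limit (yielding connectedness and nondegeneracy of the limiting domains), so there is no gap to fill there.

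Your second concern---ruling out nested complements---is more substantive, but your proposed resolution (that the uniform-domain structure precludes it) does not work: concentric balls $B_1(0)\subset B_2(0)$ in $\R^n$ are both uniform domains with boundaries at distance $1$, yet $\Omega_1^c$ and $\Omega_2^c$ are nested and certainly not disjoint. The point is that Lemma \ref{lem:harmonic} and Theorem \ref{main1} require $\Omega_\alpha^c\cap\Omega_\beta^c=\emptyset$, and boundary separation alone does not deliver this. The body of the paper avoids the issue by assuming $\de(\Omega_\alpha^c,\Omega_\beta^c)\geq\Gamma$ directly, which immediately forces the complements to be pairwise disjoint (and that property then passes cleanly to the limit by uniform convergence of the $\de_{\Omega_{\alpha,i}^c}$). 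So to run the argument under the hypothesis as phrased in Theorem \ref{main3}, you should either interpret the separation condition as being imposed on the complements rather than just the boundaries, or add pairwise disjointness of the $\Omega_\alpha^c$ as an explicit assumption; with that adjustment your proof is complete and matches the paper's.
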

Here $\mathbb D$ is the Sturm's transportation distance \cite{stugeo1}. We actually will prove the theorem in the class of $\RCD$ spaces.

The main result in \cite{bkmw} is that a subset $\Omega$ with mean curvature bounded from below by $N-1$ inside an $\RCD(0,N)$ space $X$ which attains the inscribed radius bound $1$, is isomorphic to a truncated cone w.r.t. its intrinsic geometry. The following theorem is now the correponding almost rigidity theorem.
\begin{theorem}\label{main4} Let $L, c, C, \Gamma>0$ and $N\geq 2$. For every $\epsilon >0$ there exists $\delta>0$ such that the following holds. 

Let $M$ be a Riemannian manifold with $\dim_M\leq N$, $\ric_M\geq -\delta$ and $\diam_X\leq L$,  and let $\Omega$ be  open and $(c,C)$-uniform such that $\partial \Omega\neq \emptyset$ is smooth and has  mean curvature bounded from below by $N-1-\delta$.  Assume there exists $x\in \Omega$ such that $\de_{\Omega^c}(x)\geq 1-\delta$.

Then, there exists an $\RCD(0, N)$ space $Z$, an $\RCD(N-2,N-1)$ space $Y$ and  an open subset $\Omega'\subset Z$ such that
$(\tilde{\Omega}', \tilde \de_{\Omega'}, \m_Z|_{\Omega'})$ is isomorphic to $Y\times^{N-1}_{r} [0, 1]$ and 
\begin{align*}
\mathbb D(X,Z)\leq \epsilon \ \ \ \mbox{ and } \ \ \ \ \mathcal D(\Omega, \Omega')\leq \epsilon.
\end{align*}
\end{theorem}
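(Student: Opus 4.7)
\emph{Proof plan.} The plan is to argue by contradiction via a compactness argument, in parallel to the strategy underlying Theorem \ref{main3}. I proceed directly in the category of $\RCD$ spaces with Laplace mean curvature bounds, since the statement for smooth Riemannian manifolds then follows by noting that smooth mean curvature bounds imply Laplace mean curvature bounds in this framework. Assume the conclusion fails for some $\epsilon_0>0$; then there exist $\delta_i\downarrow 0$, $\RCD(-\delta_i,N)$ spaces $X_i$ with $\m_{X_i}(X_i)\leq V$ and $\diam X_i\leq L$, and open $(c,C)$-uniform subsets $\Omega_i\subset X_i$ with $\partial\Omega_i\neq\emptyset$, Laplace mean curvature bounded below by $N-1-\delta_i$, and points $x_i\in\Omega_i$ satisfying $\de_{\Omega_i^c}(x_i)\geq 1-\delta_i$, but for which no admissible pair $(Z,\Omega')$ realizes both closeness estimates with $\epsilon_0$.

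By Sturm's precompactness theorem and the stability of the $\RCD$ condition, a subsequence of $(X_i,\de_{X_i},\m_{X_i})$ converges in the transportation distance $\mathbb D$ to an $\RCD(0,N)$ space $Z$. The boundary distance functions $\de_{\Omega_i^c}$ are $1$-Lipschitz and uniformly bounded by $L$, so by the Arzela-Ascoli-type statement packaged in the uniform distance of Definition \ref{def:ud}, after passing to a further subsequence they converge uniformly to a $1$-Lipschitz function $f:Z\to[0,\infty)$. Setting $\Omega':=\{f>0\}$, the $(c,C)$-uniform property of the $\Omega_i$ persists in the limit and allows me to identify $f$ with $\de_{(\Omega')^c}$, whence $\mathcal D(\Omega_i,\Omega')\to 0$.

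By the stability result of Theorem \ref{th:meancurvaturestability}, the Laplace mean curvature of $\partial\Omega'$ in $Z$ is bounded below by $N-1$. A limit point $x_\infty\in\Omega'$ of the $x_i$ satisfies $\de_{(\Omega')^c}(x_\infty)\geq 1$, while the inscribed radius estimate from \cite{bkmw} applied to $\Omega'\subset Z$ forces the reverse inequality. Hence the inscribed radius bound is attained, and applying the rigidity theorem of \cite{bkmw} yields an $\RCD(N-2,N-1)$ space $Y$ with $(\tilde{\Omega}',\tilde\de_{\Omega'},\m_Z|_{\Omega'})\simeq Y\times^{N-1}_r[0,1]$. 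Since also $\mathbb D(X_i,Z)\to 0$, for $i$ large the pair $(Z,\Omega')$ would satisfy both closeness estimates with $\epsilon_0$, contradicting the choice of the sequence.

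The main obstacle is the limit step. First, identifying the uniform limit $f$ of the extrinsic boundary distances with $\de_{(\Omega')^c}$ for an honest open subset $\Omega'\subset Z$, and in particular preventing the two degenerations $\Omega'=Z$ (if the boundaries ``recede to infinity'') or $\Omega'=\emptyset$ (if they ``swallow'' the space), relies on the $(c,C)$-uniform property being preserved under this convergence. Second, upgrading mGH convergence of the ambient spaces together with uniform convergence of the boundary distance functions to convergence in the intrinsic distance $\mathcal D$ uses the quantitative two-sided comparison between intrinsic and extrinsic distances granted by uniformity. Once these limit issues are settled, the conclusion is a direct combination of the stability of Laplace mean curvature bounds with the exact rigidity theorem of \cite{bkmw}.
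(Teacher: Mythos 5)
Your proposal is correct and follows essentially the same route as the paper: the paper proves the $\RCD$ version of Theorem \ref{main4} by the identical contradiction/compactness scheme used for Theorem \ref{main3} (mGH precompactness and $\RCD$ stability, Arzela--Ascoli for the $1$-Lipschitz boundary distance functions via Lemma \ref{lem:AA}, persistence of $(c,C)$-uniformity via Lemma \ref{lem:uniformdomain}, stability of Laplace mean curvature bounds via Theorem \ref{th:meancurvaturestability}), and then invokes the inscribed-radius rigidity theorem of \cite{bkmw} in the limit instead of the splitting theorem. The only cosmetic difference is that in the paper the identification of the uniform limit with $\de_{(\Omega')^c}$ does not use uniformity (that is Lemma \ref{lem:AA}); uniformity is only needed to keep the limit domain connected and nondegenerate, exactly as you use it.
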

Here $Y\times^{N-1}_r [0,1]$ denotes the truncated $N$-Euclidean cone over $Y$.

The notion of $(c,C)$-uniform domain (Definition \ref{def:undo}) is well-known  in the study of elliptic and parabolic PDEs. In our theorem this property guarantees that connectedness of domains is preserved under uniform convergence of their distance functions to the boundary.  
In fact one can see that connectedness of the limit domain is  necessary to be able to apply the previous rigidity theorems and any assumption on the sequence $\Omega_i$ that preserves connected in the limit will be enough for  the theorem to hold.

Another application of stability of Laplace mean curvature bounds w.r.t. uniform convergence is stability of "constant mean curvature hypersurfaces", and in particular "minimal hypersurfaces", along a sequence of Riemannian manifold with lower Ricci curvature bounds that converge in measured Gromov-Hausdorff sense. We will discuss this in the Appendix \ref{53}.

The article is organized as follows. In Section 2 we recall the necessary background about $\CD$ spaces, first and second order calculus on metric measure spaces, $\RCD$ spaces, and the 1D localisation technique. 

In Section 3 we review several notions of mean curvature bounds for open subsets $\Omega$ in $\RCD$ spaces and show that they are equivalent under suitable regularity assumptions on $\partial \Omega$. In particular, we show  equivalence to the Laplace estimate and introduce Laplace mean curvature bounds. 

In Section 4 we first prove that  open subsets with disconnected boundary and mean curvature bounded from below in essentially non-branching $\CD$ spaces admit a measurable splitting. Then, we obtain the isometric splitting in the context of $\RCD$ spaces (Theorem \ref{main1}, Theorem \ref{main2}).

In Section 5 we first review uniform convergence of functions on a sequence of compact metric spaces, and define the uniform distance. Then we prove stability of mean curvature bounds under uniform convergence and deduce the almost rigidity theorems (Theorem \ref{main3} and Theorem \ref{main4}) in the context of $\RCD$ spaces. 

In the Appendix \ref{53} we prove  Theorem \ref{th:min} concerning "constant mean curvature hypersurfaces". 
\subsection{Acknowledgements}This work  started when the author was funded by the Deutsche Forschungsgemeinschaft (DFG) - Projektnummer 396662902, ``Synthetische Kr\"ummungsschranken durch Methoden des optimal Transports''. The author is grateful to  Daniele Semola for bringing Example \ref{examp} to his attention.  The auhtor also wants to thank the unknown referee for her or his important comments and valuable remarks that helped to improve this article, especially Remark \ref{rem:effective} and the second example in \ref{examp}. \smallskip

\section{Preliminaries.}
\subsection{Curvature-dimension condition}

Let $(X,\de)$ be a complete and separable metric space equipped with a locally finite Borel measure $\m$. We call the triple $(X,\de,\m)$ a metric measure space. Sometimes it will be convenient to simplify the notion and to denote a metric or  metric measure space just $X$ and the correponding distance function or reference measure $\de_X$ and $\m_X$ respectively. We will frequently use this notation in the following. 

Given a metric space $(X,\de)$ a geodesic is a length minimizing curve $\gamma:[a,b]\rightarrow X$.
We denote the set of constant speed geodesics $\gamma:[a,b]\rightarrow X$ with $\mathcal G^{[a,b]}(X)$ equipped with the topology of uniform convergence and set $\mathcal G^{[0,1]}(X)=:\mathcal G(X)$. For $t\in [a,b]$ the evaluation map $e_t:\mathcal G^{[a,b]}(X)\rightarrow X$ is defined as $\gamma\mapsto \gamma(t)$ and $e_t$ is continuous.
%
A set of geodesics $F\subset \mathcal{G}(X)$ is said to be {\it non-branching} if  $\forall\epsilon\in (0,1)$ the map $e_{[0,\epsilon]}|_{F}$ is one to one.


The set of (Borel) probability measures is $\mathcal P(X)$, the subset of probability measures with finite second moment is $\mathcal P^2(X)$,  the set of probability measures in $\mathcal P^2(X)$ that are $\m$-absolutely continuous is denoted with $\mathcal P^2(X,\m)$ and the subset of measures in $\mathcal P^2(X,\m)$ with bounded support is denoted with $\mathcal{P}_b^2(X,\m)$.

The space $\mathcal P^2(X)$ is equipped with the $L^2$-Wasserstein distance $W_2$ that is finite on $\mathcal P^2(X)$. 
%
%
A dynamical optimal coupling is a probability measure $\Pi\in \mathcal P(\mathcal G(X))$  such that $t\in [0,1]\mapsto (e_t)_{\#}\Pi$ is a $W_2$-geodesic in $\mathcal P^2(X)$. 
The set of  dynamical optimal couplings $\Pi\in \mathcal P(\mathcal G^{}(X))$ between $\mu_0,\mu_1\in \mathcal P^2(X)$ is denoted with $\OptGeo(\mu_0,\mu_1)$. 
%

A metric measure space $(X,\de,\m)$ is called \textit{essentially nonbranching} if for any pair $\mu_0,\mu_1\in \mathcal P^2(X,\m)$ every optimal dynamical plan $\Pi\in \OptGeo(\mu_0,\mu_1)$ is concentrated on a set of nonbranching geodesics.
\begin{definition}\label{def:gensin}
For $\kappa\in \mathbb{R}$ we define $\cos_{\kappa}:[0,\infty)\rightarrow \mathbb{R}$ as the solution of 
\begin{align*}
v''+\kappa v=0, \ \ \ v(0)=1 \ \ \& \ \ v'(0)=0.
\end{align*}
$\sin_{\kappa}$ is defined as solution of the same ODE with initial value $v(0)=0 \ \&\ v'(0)=1$. That is 
\begin{align*}
\cos_{\kappa}(x)=\begin{cases}
 \cosh (\sqrt{|\kappa|}x) & \mbox{if } \kappa<0\\
1& \mbox{if } \kappa=0\\
\cos (\sqrt{\kappa}x) & \mbox{if } \kappa>0
                \end{cases}
                \quad
   \sin_{\kappa}(x)=\begin{cases}
\frac{ \sinh (\sqrt{|\kappa|}x)}{\sqrt{|\kappa|}} & \mbox{if } \kappa<0\\
x& \mbox{if } \kappa=0\\
\frac{\sin (\sqrt{\kappa}x)}{\sqrt \kappa} & \mbox{if } \kappa>0
                \end{cases}                 
                \end{align*}
Let $\pi_\kappa$ be the diameter of a simply connected space form $\mathbb S^2_k$ of constant curvature $\kappa$, i.e.
\[
\pi_\kappa= \begin{cases}
 \infty \ &\textrm{ if } \kappa\le 0\\
\frac{\pi}{\sqrt \kappa}\ &  \textrm{ if } \kappa> 0.

\end{cases}
\]
In \cite{Kasue83} and \cite{sakurai} the authors define 
\begin{align}
\label{equ:ks}
s_{\kappa,\lambda}{\red (r)} = \cos_{\kappa}{\red (r)}- \lambda \sin_\kappa{\red (r)}
\end{align}
for $\kappa,\lambda\in \R$. The pair $(\kappa,\lambda)$ satisfies {\it  the ball condition} if the equation $s_{\kappa, \lambda}(r)=0$ has a positive solution. The latter happens if and only if one of the following three cases holds:  (1) $\kappa>0$ and $\lambda\in \R$, (2) $\kappa=0$ and $\lambda>0$ or (3) $\kappa\leq 0$ and $\lambda>\sqrt{|\kappa|}$.
{For $(\kappa,\lambda)=(\frac{K}{N-1},\frac{H}{N-1})$  let $r_{K,H,N}$ be  the smallest positive zero of $s_{\kappa,\lambda}= s_{K/(N-1), H/(N-1)}$ if any exists;  
moreover $s_{\kappa,\lambda}(r)<0$ for all $r>r_{K,H,N}$ if $\kappa \le 0$,   while $s_{\kappa,\lambda}$ oscillates sinusoidally with mean zero and period 
greater than $2 r_{K,H,N}$ if $\kappa>0$. In particular,}  
$r_{K,H,N}<\infty$ if and only if $(\frac{K}{N-1},\frac{H}{N-1})$ satisfies the ball-condition.

For $K\in \mathbb{R}$, $N\in (0,\infty)$ and $\theta\geq 0$ we define the \textit{distortion coefficient} as
\begin{align*}
t\in [0,1]\mapsto \sigma_{K,N}^{(t)}(\theta)=\begin{cases}
                                             \frac{\sin_{K/N}(t\theta)}{\sin_{K/N}(\theta)}\ &\mbox{ if } \theta\in [0,\pi_{K/N}),\\
                                             \infty\ & \ \mbox{otherwise}.
                                             \end{cases}
\end{align*}
Note that $\sigma_{K,N}^{(t)}(0)=t$.
Moreover, for $K\in \mathbb{R}$, $N\in [1,\infty)$ and $\theta\geq 0$ the \textit{modified distortion coefficient} is defined as
\begin{align*}
t\in [0,1]\mapsto \tau_{K,N}^{(t)}(\theta)=\begin{cases}
                                            \theta\cdot\infty \ & \mbox{ if }K>0\mbox{ and }N=1,\\
                                            t^{\frac{1}{N}}\left[\sigma_{K,N-1}^{(t)}(\theta)\right]^{1-\frac{1}{N}}\ & \mbox{ otherwise}
                                           \end{cases}\end{align*}
where our convention is $0\cdot \infty =0$.
\end{definition}
%
\begin{definition}[Curvature-Dimension Condition, \cite{stugeo2,lottvillani, bast}]
A metric measure space $(X,\de,\m)$ satisfies the \textit{curvature-dimension condition} $\CD(K,N)$, $K\in \mathbb{R}$, $N\in [1,\infty)$, if for all $\mu_0,\mu_1\in \mathcal{P}_b^2(X,\m)$ 
there exists an $L^2$-Wasserstein geodesic $(\mu_t)_{t\in [0,1]}$ and an optimal coupling $\pi$ between $\mu_0$ and $\mu_1$ such that 
\begin{align}\label{ineq:cd}
S_N(\mu_t|\m)\leq -\int \left[\tau_{K,N}^{(1-t)}(\theta)\rho_0(x)^{-\frac{1}{N}}+\tau_{K,N}^{(t)}(\theta)\rho_1(y)^{-\frac{1}{N}}\right]d\pi(x,y)
\end{align}
where $\mu_i=\rho_id\m$, $i=0,1$, and $\theta= \de(x,y)$.

{We say a metric measure space $(X,\de,\m)$ satisfies the \textit{reduced curvature-dimension condition} $\CD^*(K,N)$ for $K\in \mathbb{R}$ and $N\in (0,\infty)$ if we replace the coefficients $\tau^{(t)}_{K,N}(\theta)$ with $\sigma_{K,N}^{(t)}(\theta)$.}

If $(X,\de,\m)$ is complete and satisfies the condition $\CD(K,N)$ for $N<\infty$, then $(\supp \m, \de)$ is a geodesic space and $(\supp\m,  \de,\m)$ is 
$\CD(K,N)$. In the following we can always assume that $\supp\m=X$. The condition $\CD(K,N)$ implies the condition $\CD^*(K,N)$. 
\end{definition}

\subsection{First order calculus for metric measure spaces}
%

Let $(X,\de,\m)$ be a metric measure space. We denote with $\lip(X)$ the space of Lipschitz functions $f:X\rightarrow \R$, and with $\lip_c(\Omega)$ the space of Lipschitz function with support in $\Omega$ for an open set $\Omega\subset X$.
For $f\in \lip(X)$ the local slope is
\begin{align*}
\mbox{Lip}(f)(x)=\limsup_{y\rightarrow x}\frac{|f(x)-f(y)|}{d(x,y)}, \ \ x\in X.
\end{align*}
If $f\in L^2(\m)$, a function $g\in L^2(\m)$ is called \textit{relaxed gradient} if there exists a sequence of Lipschitz functions $f_n$ which converges in $L^2(\m)$ to $f$, and there exists $h$ such that 
$\mbox{Lip}f_n$ weakly converges to $h$ in $L^2(\m)$ and $h\leq g$ $\m$-a.e. A function $g\in L^2(\m)$ is called the \textit{minimal relaxed gradient} of $f$ and denoted by $|\nabla f|$ if it is a relaxed gradient and minimal w.r.t. the $L^2$-norm among all relaxed gradients.
The object $|\nabla f|$ is local in  the sense that 
\begin{align*}
|\nabla f|= 0 \ \m\mbox{-a.e. on } f^{-1}(\mathcal N) \ \forall \mathcal N\subset \R \mbox{ s.t. } \mathcal L^1(\mathcal N)
\end{align*}
and 
$
|\nabla f|=|\nabla g| \ \m\mbox{-a.e. on } \{f=g\}.$
The space of \textit{$L^2$-Sobolev functions} is $$W^{1,2}(X):= \left\{ f\in L^2(\m): \int |\nabla f|^2 d\m<\infty\right\}.$$
The set $W^{1,2}(X)$ equipped with the norm 
$
\left\|f\right\|_{W^{1,2}(X)}^2=\left\|f\right\|^2_{L^2}+\left\||\nabla f|\right\|_{L^2}^2
$
is a Banach space.
If $W^{1,2}(X)$ is a Hilbert space, we say the metric measure space $(X,\de,\m)$ is \textit{infinitesimally Hilbertian.}

For $f,g\in W^{1,2}(X)$ one defines functions $D^{\pm}f(\nabla g): X\rightarrow \R$ by
\begin{align*}
D^+f(\nabla g) = \inf_{\epsilon>0} \frac{ |\nabla (f+\epsilon g)|^2 - |\nabla f|^2}{2\epsilon}, \\ 
D^-f(\nabla g) = \sup_{\epsilon<0} \frac{ |\nabla (f+\epsilon g)|^2 - |\nabla f|^2}{2\epsilon}.
\end{align*}
If $(X,\de,\m)$ is infinitesimally Hilbertian, then  $D^+f(\nabla g)= D^-f(\nabla g)$ $\m$-a.e.
Moreover
\begin{align}\label{rcdinnerproduct}
 \langle \nabla f,\nabla g\rangle := D^+f(\nabla g)= D^+g(\nabla f)=\frac{1}{4}|\nabla (f+g)|^2-\frac{1}{4}|\nabla (f-g)|^2
\end{align}
and $\langle \nabla f, \nabla g\rangle\in L^1(\m)$.

\subsection{Distributional Laplacian and strong maximum principle}We recall the notion of {the} distributional Laplacian (cf. \cite{giglistructure, cav-mon-lapl-18}).

\begin{definition} Let $(X,\de,\m)$ be a locally compact metric measure  {space and $\Omega\subset X$ be open. Let $\lip_c(\Omega)$ denote the set of Lipschitz functions compactly supported in an open subset  $\Omega$.}
A {\em Radon functional} over $\Omega$  is a linear map $T: \lip_c(\Omega)\rightarrow \R$ such that for every compact subset $W$ in $\Omega$ there exists a constant $C_W\geq  0$ such that
\begin{align}\label{Radon}
|T(f)|\leq {C_W} \max_W|f| \qquad \forall f\in \lip_c(\Omega) \mbox{ with }\supp f\subset W.
\end{align}
One says $T$ is non-negative if {\red $T(f)\geq 0$ for all $f\in \lip_c(\Omega)$ satisfying $f\geq 0$.}
\end{definition}
\begin{remark}
The Riesz-Markov-Kakutani representation theorem says that for a non-negative Radon functional {$T$}
 there exists a  unique Radon measure $\mu_T$ such that $T(f)=\int f d\mu_T$ $\forall$ $f\in \lip_c(\Omega)$.
 \end{remark}
Recall that $u\in W_{loc}^{1,2}(\Omega)$ for an open set $\Omega\subset X$ if for any Lipschitz function $\phi$ with compact support in $\Omega$ we have $\phi \cdot u\in W^{1,2}(X)$. Thanks to the locality properties of $|\nabla f|$ for $f\in W^{1,2}(X)$ the object $|\nabla u|$ is well defined $\m$-a.e. for $u\in W^{1,2}_{loc}(\Omega)$. If $|\nabla u|\in L^2(\m)$, one writes $u\in W^{1,2}(\Omega)$. 
If $u\in \lip(X)$ then $u\in W_{loc}^{1,2}(\Omega)$. 

\begin{definition}[{Nonsmooth Laplacian}]
Let $\Omega\subset X$ be  open and  let $u\in W^{1,2}(\Omega)$. One says $u$ is in the domain of the distributional Laplacian on $\Omega$, writing $u\in D({\bf \Delta}, \Omega)$, provided there exists a Radon functional $T$ over $\Omega$ such that
\begin{align*}
\int D^-u(\nabla f) d\m \leq - T(f) \leq \int  D^+ u(\nabla f) d\m {\quad \forall f \in \lip_c(\Omega)}.
\end{align*}If $T$ is represented as a measure $\mu_T$, one writes $\mu_T\in {\bf \Delta}_{\Omega}u$. If there is only one such measure $\mu_T$ by abuse of notation we will identify $\mu_T$ with $T$ and write $\mu_T={\bf \Delta}_{\Omega} u$.

If $(X,\de,\m)$ is infinitesimally Hilbertian,   $u\in W^{1,2}(X)$ is in the domain of the $L^2$-Laplacian if there exists $h\in L^2(\m)$ such that 
\begin{align*}
\int \langle \nabla u, \nabla f\rangle d\m = \int hf d\m \ \ \forall f\in \lip(X).
\end{align*}
In this case we write $h=\Delta u$ and $u\in D_{L^2}(\Delta)$. For a linear subspace $\mathbb V\subset L^2(\m)$ we write $u\in D_{\mathbb V}(\Delta)$ whenever $\Delta u\in \mathbb V$. 
\end{definition}
\begin{remark}[{Locality and linearity}]
\begin{itemize}
\item[(i)] If $u\in D({\bf \Delta}, \Omega)$ and $\Omega'$ is open in $X$ with $\Omega'\subset \Omega$, then $u\in D({\bf \Delta},\Omega')$ and for $\mu\in {\bf \Delta}_{\Omega}u$ it follows that $\mu|_{\Omega'}\in {\bf \Delta}_{\Omega'}u$.
\item[(ii)] Assume $(X,\de,\m)$ is infinitesimally Hilbertian.  If $u,v\in D({\bf \Delta}, \Omega)$, then $u+v\in D({\bf \Delta},\Omega)$ and for $\mu_u\in {\bf \Delta}_\Omega u$ and $\mu_v\in {\bf \Delta}_{\Omega}v$ it follows that $\mu_u+\mu_v\in {\bf \Delta}_\Omega(u+v)$.
\end{itemize}
\end{remark}

Recall that $u\in W^{1,2}(\Omega)$ is {\it sub-harmonic} if
\begin{align*}
\int_{\Omega} |\nabla u|^2 d\m\leq \int_{\Omega} |\nabla (u+g)|^2 d\m \ \ \forall g\in W^{1,2}(\Omega) \mbox{ with } g\leq 0.
\end{align*}
One says $u$ is {\it super-harmonic} if $-u$ is sub-harmonic, and $u$ is {\it harmonic} if it is both sub- and super-harmonic.  


\begin{theorem}[{Characterizing super-harmonicity, \cite[Theorem 4.3]{giglimondino}}]\label{thm:sh}
Let $X$ be {\red an $\RCD(K,N)$ space with $K\in \R$ and $N\in [1,\infty)$,} let $\Omega \subset X$ be open and $u\in W_{loc}^{1,2}(\Omega)$.
Then $u$ is {super-harmonic} if and only if ${u} \in D({\bf \Delta}, \Omega)$ and there exists $\mu\in {\bf \Delta}_\Omega u$ such that $\mu \leq 0$.
\end{theorem}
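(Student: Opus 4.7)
Since $(X,\de,\m)$ is $\RCD(K,N)$ and hence infinitesimally Hilbertian, for every $u\in W^{1,2}_{loc}(\Omega)$ and $f\in \lip_c(\Omega)$ one has $D^+ u(\nabla f)=D^- u(\nabla f)=\langle\nabla u,\nabla f\rangle$, and $\langle\nabla u,\nabla f\rangle\in L^1(\m)$ is supported in $\supp f$. Hence $u\in D({\bf\Delta},\Omega)$ with $\mu\in {\bf\Delta}_\Omega u$ reduces to the integration-by-parts identity
\[
\int \langle\nabla u,\nabla f\rangle\, d\m \;=\; -\int f\, d\mu\qquad \forall f\in\lip_c(\Omega),
\]
and the sign condition $\mu\le 0$ is equivalent to $\int \langle\nabla u,\nabla f\rangle\, d\m\ge 0$ for all non-negative $f\in\lip_c(\Omega)$. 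The plan is to show super-harmonicity is equivalent to this one-sided Sobolev inequality, and then upgrade the linear functional $T(f):=-\int \langle\nabla u,\nabla f\rangle\, d\m$ on $\lip_c(\Omega)$ to a non-positive Radon measure via Riesz--Markov--Kakutani.

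The equivalence with super-harmonicity is elementary via the identity
\[
\int|\nabla(u+h)|^2\, d\m-\int|\nabla u|^2\, d\m \;=\; 2\int\langle\nabla u,\nabla h\rangle\, d\m+\int|\nabla h|^2\, d\m,
\]
valid whenever $h$ has compact support in $\Omega$. Since $u$ is super-harmonic iff $-u$ is sub-harmonic, the left-hand side is required to be non-negative for all admissible $h\ge 0$. For the forward implication I would test with $h=\varepsilon f$ for $f\in\lip_c(\Omega)$ non-negative, divide by $2\varepsilon$ and let $\varepsilon\downarrow 0$ to obtain $\int \langle\nabla u,\nabla f\rangle\, d\m\ge 0$. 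For the converse, given the one-sided inequality on $\lip_c(\Omega)$ and any non-negative compactly supported $h\in W^{1,2}(\Omega)$, one approximates $h$ by non-negative Lipschitz functions of compact support (using density of Lipschitz functions in Sobolev space on $\RCD$ spaces together with $W^{1,2}$-continuity of positive-part truncation); the cross term in the display passes to the limit and is non-negative, proving super-harmonicity.

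The main obstacle is the final step: converting $T$ into a Radon measure. Since $u\in W^{1,2}_{loc}(\Omega)$, Cauchy--Schwarz only gives $|T(f)|\le \||\nabla u|\|_{L^2(\supp f)}\cdot\||\nabla f|\|_{L^2}$, which controls $T$ by the Sobolev norm of $f$ rather than by $\max|f|$ as demanded by the Radon definition. However, sign-definiteness rescues this automatically. For any compact $W\subset\Omega$ pick a cutoff $\phi\in\lip_c(\Omega)$ with $0\le \phi\le 1$ and $\phi\equiv 1$ on $W$; then for $f\in\lip_c(\Omega)$ with $\supp f\subset W$ the functions $\|f\|_\infty\phi\pm f$ lie in $\lip_c(\Omega)$ and are non-negative, so
\[
0\ge T\bigl(\|f\|_\infty\phi\pm f\bigr)=\|f\|_\infty T(\phi)\pm T(f),
\]
yielding $|T(f)|\le -T(\phi)\,\|f\|_\infty=:C_W\|f\|_\infty$. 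Hence $-T$ is a non-negative Radon functional, and Riesz--Markov--Kakutani produces a non-negative Radon measure representing it; its negative is the desired element $\mu\le 0$ of ${\bf\Delta}_\Omega u$.
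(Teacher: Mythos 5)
The paper itself gives no proof of Theorem \ref{thm:sh} (it is quoted from Gigli--Mondino), and your argument is essentially the standard one from that source: infinitesimal Hilbertianity collapses the defining inequalities to the single identity $T(f)=-\int\langle\nabla u,\nabla f\rangle\,d\m$ on $\lip_c(\Omega)$, super-harmonicity is identified with positivity of $-T$ on non-negative test functions via the first-variation/energy identity, and the cutoff trick $\|f\|_\infty\phi\pm f\ge 0$ plus Riesz--Markov--Kakutani upgrades the sign-definite functional to a non-positive Radon measure. All three steps are correct; the automatic local boundedness of a positive functional, which you prove rather than assume, is indeed the only point where the Radon condition could have been an obstacle.

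One caveat: your converse verifies the energy inequality only against non-negative perturbations $h$ with compact support in $\Omega$, whereas the definition as literally written in this paper admits arbitrary non-negative $h\in W^{1,2}(\Omega)$. With that larger competitor class the stated equivalence would in fact fail (on $\Omega=(0,1)\subset\R$ the function $u(x)=-x$ has ${\bf \Delta}_\Omega u=0$, yet loses against $h(x)=\varepsilon x$ for small $\varepsilon>0$), so the definition must be read, as in Gigli--Mondino, with compactly supported perturbations --- which is exactly the class your approximation step produces, but you should say so explicitly. Relatedly, since $u$ is only assumed to lie in $W^{1,2}_{loc}(\Omega)$, the identity $\int|\nabla(u+h)|^2-\int|\nabla u|^2=2\int\langle\nabla u,\nabla h\rangle+\int|\nabla h|^2$ should be localized to a neighbourhood of $\supp h$, since both energies over all of $\Omega$ may be infinite; for compactly supported $h$ this is harmless because the integrand $|\nabla(u+h)|^2-|\nabla u|^2$ vanishes off $\supp h$. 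With these two readings made explicit, your proof is complete.
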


The following is \cite[Theorem 9.13]{bjoern} (see also \cite{gigli_rigoni}):
\begin{theorem}[Strong Maximum Principle]\label{thm:mp}
Let $X$ be an $\RCD(K,N)$ space with $K\in \R$ and $N\in [1,\infty)$, let $U\subset X$ be {\red a connected open set} with compact closure and let $u\in W_{loc}^{1,2}(\Omega)\cap C(\Omega)$ be sub-harmonic.
If there exists $x_0\in \Omega$ such that $u(x_0)=\max_{\bar \Omega} u$ then $u$ is constant.
\end{theorem}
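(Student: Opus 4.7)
I would reduce the statement to a topological argument. Set $M:=u(x_0)=\max_{\bar U} u$ and $A:=\{x\in U: u(x)=M\}$. Continuity of $u$ makes $A$ closed in $U$, and $A$ is nonempty by assumption. Since $U$ is connected, it would suffice to prove that $A$ is also open; so the whole proof reduces to showing that whenever $u$ attains its maximum at some $y\in U$, then $u\equiv M$ in a neighborhood of $y$.

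\textbf{Dualising.} Put $v:=M-u\in W^{1,2}_{loc}(U)\cap C(U)$. Since the distributional Laplacian is linear (this uses the infinitesimal Hilbertianity of an $\RCD$ space) and constants are harmonic, $v$ inherits super-harmonicity from the sub-harmonicity of $u$; moreover $v\geq 0$ on $\bar U$ and $v(y)=0$. By Theorem \ref{thm:sh} there is a non-positive Radon measure $\mu\in{\bf\Delta}_U v$, so $v$ is a genuine non-negative super-harmonic function in the distributional sense.

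\textbf{Weak Harnack.} The analytic heart of the argument is a weak Harnack inequality for non-negative super-harmonic functions on $\RCD(K,N)$ spaces: for every compact set $W$ with $W\subset U$ there exist $p>0$ and $C>0$ such that for all balls $B_{2r}(z)\subset U$ with $z\in W$,
\[
\Bigl(\meanint_{B_r(z)} v^{p}\, d\m\Bigr)^{1/p}\leq C\,\esinf_{B_r(z)} v.
\]
Such an inequality is available on $\RCD(K,N)$ because these spaces are locally doubling (Bishop-Gromov) and support a local $(1,2)$-Poincar\'e inequality (Rajala), so the classical De Giorgi-Nash-Moser iteration of Bj\"orn-Bj\"orn applies once distributional super-harmonicity is translated into its variational form via Theorem \ref{thm:sh}. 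Applying it at $z=y$, the continuity of $v$ and $v(y)=0$ force $\esinf_{B_r(y)}v=0$ for small $r$, hence $v\equiv 0$ $\m$-a.e.\ on $B_r(y)$; continuity then promotes this to $v\equiv 0$ on $B_r(y)$, so $B_r(y)\subset A$ and $A$ is open.

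\textbf{Main obstacle.} The hard part is the weak Harnack inequality itself. In the smooth Riemannian setting it is standard, but its extension to $\RCD(K,N)$ hinges on two non-trivial points: the equivalence between the distributional and variational notions of super-harmonicity (our Theorem \ref{thm:sh}) and the full PI-space structure of $\RCD(K,N)$ needed to run the Moser iteration. An alternative route, followed in the work of Gigli-Rigoni, avoids De Giorgi-Nash-Moser by smoothing $v$ via the heat semigroup $H_t$, using the Bakry-\'Emery gradient estimate $|\nabla H_tv|^2\leq e^{-2Kt}H_t|\nabla v|^2$ together with super-harmonicity of $v$ to propagate the zero set of $H_tv$, and then sending $t\downarrow 0$; this is more intrinsic to the $\RCD$ framework but trades the PDE iteration for heavier synthetic machinery.
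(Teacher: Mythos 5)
The paper gives no proof of Theorem \ref{thm:mp} at all: it is quoted directly from \cite[Theorem 9.13]{bjoern} (see also \cite{gigli_rigoni}), and your sketch is essentially a reconstruction of that cited argument — connectedness of $U$ plus the weak Harnack inequality for non-negative superharmonic functions, which is available because an $\RCD(K,N)$ space is locally doubling and supports a local $(1,2)$-Poincar\'e inequality. Your outline is correct; the only superfluous step is the appeal to Theorem \ref{thm:sh}, since the paper's definition of sub-harmonicity is already the variational (subminimizer) one used in the Bj\"orn--Bj\"orn machinery, so $v=M-u$ is a non-negative superminimizer directly, without passing through the distributional Laplacian.
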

%
%

\subsection{Riemannian curvature-dimension conditon} 
\begin{definition}\label{def:rcd}
A metric measure space $(X,\de,\m)$ satisfies the (reduced) \emph{Riemannian curvature-dimension condition} $\RCD(K,N)$ ($\RCD^*(K,N)$) for $K\in \mathbb{R}$ and {$N\in [1,\infty)$} if it satisfies the (reduced) curvature-dimension
condition $\CD(K,N)$ ($\CD^*(K,N)$) and is infinitesimally Hilbertian. 
\end{definition}
For a brief overview on the history of this definition we refer  the reader to the preliminary section of \cite{Kap-Ket-18}.  {For $N\in [1,\infty)$ an $\RCD^*(K,N)$ space  $X$ with $\m_X$ finite satisfies the condition $\RCD(K,N)$ \cite{cavmil} and  the converse direction holds without any assumption. }

Let $(X,\de,\m)$ be a metric measure space that is infinitesimally Hilbertian but does not necessarily satisfy a curvature-dimension condition. For $f\in D_{W^{1,2}(X)}(\Delta)$ and $\phi\in D_{L^{\infty}}(\Delta)\cap L^{\infty}(\m)$ the \textit{carr\'e du champ operator} is defined as
\begin{align*}
\Gamma_2(f;\phi):=\int \frac{1}{2}|\nabla f|^2\Delta \phi d\m - \int\langle\nabla f,\nabla \Delta f\rangle \phi d\m.
\end{align*}
A metric measure space $(X,\de,\m)$ satisfies the \textit{Bakry-\'Emery condition} $BE(K,N)$ for $K\in \mathbb{R}$, $N\in (0,\infty]$ if it satisfies the weak Bochner inequality
\begin{align*}
\Gamma_2(f;\phi)\geq \frac{1}{N}\int (\Delta f)^2 \phi d\m + K\int |\nabla f|^2 \phi d\m.
\end{align*}
for any $f\in D_{W^{1,2}(X)}(\Delta)$ and $\phi\in D_{L^{\infty}}(\Delta)\cap L^{\infty}(\m), \, \phi\ge 0$.

A metric measure space satisfies the \textit{Sobolev-to-Lipschitz} property if every
$f\in W^{1,2}(X)$ with $|\nabla f|\in L^\infty(\m)$ admits a Lipschitz representative $\tilde f\in \lip(X)$ such that the local Lipschitz constant is bounded from above $\left\| |\nabla f|\right\|_{L^\infty}$.
For $\RCD$ spaces the Sobolev-to-Lipschitz property was proved in \cite[Theorem 6.2]{agsriemannian}.
\begin{theorem}[\cite{erbarkuwadasturm, agsbakryemery, amsnonlinear}]\label{th:be}
Let $(X,\de,\m)$ be a metric measure space. The reduced Riemannian curvature-dimension condition \linebreak[4] $\RCD^*(K,N)$ for $K\in \mathbb{R}$ and $N\in [1,\infty]$ holds if and only if $(X,\de,\m)$ is infinitesimally Hilbertian, satisfies the 
Sobolev-to-Lipschitz property and the exponential growth condition $\int e^{-C\de(x_0, \cdot)^2} d\m$ for some $x_0\in X$,  and satisfies the Bakry-Emery condition $BE(K,N)$.
\end{theorem}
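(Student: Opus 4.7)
The plan is to establish the equivalence by passing through equivalent reformulations of the curvature-dimension condition at both the infinitesimal (Bochner) level and the Wasserstein (EVI/contraction) level, following Ambrosio-Gigli-Savar\'e (for $N=\infty$) together with Erbar-Kuwada-Sturm and Ambrosio-Mondino-Savar\'e (for $N<\infty$).

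For the direction $\RCD(K,N) \Rightarrow$ (infinitesimally Hilbertian $+$ Sobolev-to-Lipschitz $+$ BE(K,N)): infinitesimal Hilbertianity is built into the definition of $\RCD$. The Sobolev-to-Lipschitz property is obtained by combining $\CD(K,N)$, which identifies the Cheeger energy with the local-slope energy on a dense class of Lipschitz functions, with Hilbertianity, which via linearity allows one to upgrade $f\in W^{1,2}(X)$ with $|\nabla f|\in L^\infty(\m)$ to a Lipschitz representative of constant $\le \|{|\nabla f|}\|_{L^\infty}$. For BE(K,N), the identification of the $L^2$-gradient flow of the Cheeger energy with the Wasserstein gradient flow of $S_N$ (due to Ambrosio-Gigli-Savar\'e) converts $\CD(K,N)$ into the EVI$_{K,N}$ property of $S_N$; this in turn yields $L^2$-Wasserstein contraction of the heat semigroup, and Kuwada's duality translates the contraction back into the pointwise Bochner inequality BE(K,N).

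For the converse direction I would run the chain in reverse. Starting from BE(K,N), self-improvement via the $\Gamma_2$ chain rule applied to suitable nonlinear functions of $f$ (regularised through the heat flow) produces Bakry-Ledoux-type dimensional gradient estimates for the heat semigroup $P_t$, of the form $|\nabla P_t f|^2 \le e^{-2Kt}\, P_t(|\nabla f|^2)$ together with a negative dimensional correction involving $(\Delta P_t f)^2/N$. The Sobolev-to-Lipschitz property converts these $\m$-a.e.\ estimates into pointwise Lipschitz bounds for a continuous representative of $P_t f$, and Kuwada duality turns them into $L^2$-Wasserstein contraction and action estimates for the dual heat semigroup on $\mathcal P^2(X)$. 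The characterisation of Daneri-Savar\'e and its dimensional refinement by Erbar-Kuwada-Sturm identify such estimates with the EVI$_{K,N}$ property of $S_N$, which implies $K$-displacement semiconvexity of $S_N$ along all $W_2$-geodesics, that is, $\CD(K,N)$.

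The main obstacle is the self-improvement step on the converse side: one must justify Bochner-type identities for $\Gamma_2$ on a sufficiently rich algebra, for instance $D_{W^{1,2}(X)}(\Delta)\cap L^\infty(\m)$, work around the absence of classical smoothness by regularising along $P_t$, and push nonlinear chain-rule manipulations through to the level of Radon measures in order to reach the sharp dimensional refinement. A second critical subtlety is that the Sobolev-to-Lipschitz property is precisely what couples the analytic Bochner inequality to the metric displacement structure on $\mathcal P^2(X)$; without it, Kuwada duality still delivers Wasserstein contraction for the dual semigroup, but displacement semiconvexity of $S_N$ along $W_2$-geodesics, and hence $\CD(K,N)$, cannot be recovered.
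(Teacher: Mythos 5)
The paper itself contains no proof of Theorem \ref{th:be}: it is quoted from \cite{erbarkuwadasturm,agsbakryemery,amsnonlinear}, and your sketch reproduces the architecture of those references (gradient-flow identification, EVI, Kuwada duality, self-improvement of the Bochner inequality), so at the level of strategy there is nothing in the paper to compare against. Still, two steps in your outline are genuinely weaker than what that strategy requires. First, the Sobolev-to-Lipschitz property in the forward direction does not follow from ``$\CD(K,N)$ identifies the Cheeger energy with the slope energy, plus linearity''; in \cite[Theorem 6.2]{agsriemannian} (the result the paper itself invokes) it is deduced from the Bakry--\'Emery gradient estimate $|\nabla P_t f|^2\le e^{-2Kt}P_t\bigl(|\nabla f|^2\bigr)$ and the regularising properties of the heat semigroup, i.e.\ it lives on the Bochner side of the equivalence, not on the displacement-convexity side, and your two-line justification would not survive scrutiny.

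Second, and more seriously, the final step of your converse direction has a gap. The $\mathrm{EVI}_{K,N}$ property (equivalently, $(K,N)$-convexity of the Boltzmann entropy) characterises the \emph{entropic} condition $\CD^{e}(K,N)$, which under infinitesimal Hilbertianity (whence essential non-branching, by Rajala--Sturm) coincides with the \emph{reduced} condition $\CD^{*}(K,N)$. That is not the condition $\CD(K,N)$ used in this paper, which is formulated with the distortion coefficients $\tau^{(t)}_{K,N}$ and the R\'enyi entropies $S_N$. For $K\neq 0$ the implication $\CD^{*}(K,N)\Rightarrow\CD(K,N)$ is not a reformulation but the globalization theorem of Cavalletti--Milman \cite{cavmil}, valid for essentially non-branching spaces. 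Your sentence ``$\mathrm{EVI}_{K,N}$ implies $K$-displacement semiconvexity of $S_N$ along all $W_2$-geodesics, that is, $\CD(K,N)$'' conflates the two entropies and the two curvature-dimension conditions; as written, the converse direction only delivers $\RCD^{*}(K,N)$, and an additional (deep) ingredient is needed to reach the theorem as stated.
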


An important class of functions on an $\RCD$ space $(X,\de,\m)$ is the family $\mathbb D_\infty$ of test functions that is defined by
\begin{align*}
\mathbb D_\infty= \big\{ f\in D_{W^{1,2}(X)} (\Delta)\cap L^\infty(\m): |\nabla f|\in L^{\infty}(\m)\big\}.
\end{align*}
For $f\in \mathbb D_\infty$ one can define a Hessian $\mbox{Hess}(f)$ via the formula 
\begin{align*}
&2\mbox{Hess}f(\nabla g,\nabla h)=\\
&  \langle \nabla g, \nabla \langle \nabla h, \nabla f\rangle \rangle + \langle \nabla h, \nabla \langle \nabla f, \nabla g\rangle \rangle - \langle \nabla f, \nabla \langle \nabla g, \nabla h\rangle \rangle\mbox{ for $g, h\in \mathbb D_\infty$.}
\end{align*}
One can extend the operator $\mbox{Hess}$ to the bigger class $H^{2,2}(X)$ that contains $\mathbb{D}_\infty$ and $D_{L^2}(\Delta)$. For $f\in H^{2,2}(X)$ the Hessian is a tensorial object and admits a Hilbert-Schmidt norm $|\mbox{Hess} f|_{HS}\in L^2(\m)$. 
\begin{theorem}[\cite{savareself, giglinonsmooth, sturmconformal}]If the metric measure space $(X,\de,\m)$ satisfies the  Riemannian curvature-dimension condition $\RCD(K,\infty)$, and $f\in \mathbb{D}_{\infty}$, then $|\nabla f|^2\in W^{1,2}(X)\cap D({\bf \Delta})$ and
an improved Bochner formula holds in the sense of measures involving the Hilbert-Schmidt norm of the Hessian of $f$:
\begin{align*}
{\bf \Gamma}_2(f):=\frac{1}{2}{\bf \Delta}|\nabla f|^2- \langle\nabla f,\nabla \Delta f\rangle \m \geq \left[K |\nabla f|^2 + |\Hess f|_{HS}^2 \right]\m
\end{align*}
where ${\bf\Delta}|\nabla f|^2$ is given by unique measure, and ${\bf \Gamma}_2$ is called {\it measure valued $\Gamma_2$-operator}. In particular, the singular part of the left hand side in previous inequality is non-negative.
\end{theorem}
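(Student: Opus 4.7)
The plan is to derive everything from the Bakry--\'Emery condition $BE(K,\infty)$, which by Theorem~\ref{th:be} is equivalent to $\RCD(K,\infty)$; fix $f\in \mathbb{D}_\infty$ throughout.

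First, I would promote the weak Bochner inequality to a measure-valued Bochner inequality. Testing $BE(K,\infty)$ against nonnegative $\phi\in D_{L^\infty}(\Delta)\cap L^\infty(\m)$ and rearranging,
\[
\int \tfrac12|\nabla f|^2\Delta\phi\,d\m-\int\langle\nabla f,\nabla\Delta f\rangle\phi\,d\m-K\int|\nabla f|^2\phi\,d\m\ge 0.
\]
After a density argument extending this inequality to $\phi\in \lip_c(X)$ with $\phi\ge 0$, the left-hand side defines a nonnegative Radon functional on $\lip_c(X)$; by the Riesz--Markov--Kakutani representation theorem it corresponds to a nonnegative Radon measure $\mu$. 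Unwinding definitions yields $|\nabla f|^2\in D(\mathbf\Delta,X)$ together with the unimproved Bochner inequality $\boldsymbol\Gamma_2(f)\ge K|\nabla f|^2\m$ as signed measures.

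The harder step is to self-improve this bound by the Hessian term, following the Bakry--Ledoux--Sav\'ar\'e strategy. I would apply the weak Bochner inequality to the affine perturbations $f+\epsilon g$ for $g\in \mathbb{D}_\infty$ and expand in $\epsilon\in\R$, obtaining by polarisation a bilinear signed Radon measure
\[
\boldsymbol\Gamma_2(f,g):=\tfrac12\mathbf\Delta\langle\nabla f,\nabla g\rangle-\tfrac12\bigl(\langle\nabla f,\nabla\Delta g\rangle+\langle\nabla g,\nabla\Delta f\rangle\bigr)\m.
\]
The arbitrariness of the sign of $\epsilon$ forces, on the absolutely continuous parts with respect to $\m$, the pointwise Cauchy--Schwarz inequality
\[
\bigl(\boldsymbol\Gamma_2(f,g)-K\langle\nabla f,\nabla g\rangle\m\bigr)^2 \le \bigl(\boldsymbol\Gamma_2(f)-K|\nabla f|^2\m\bigr)\bigl(\boldsymbol\Gamma_2(g)-K|\nabla g|^2\m\bigr).
\]
Integration by parts, using the Leibniz rule for $\langle\nabla\cdot,\nabla\cdot\rangle$, identifies the absolutely continuous density of $\boldsymbol\Gamma_2(f,g)-K\langle\nabla f,\nabla g\rangle\m$ with $\Hess f(\nabla g,\nabla g)$, consistent with the polarisation formula defining $\Hess f$ in the excerpt.

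To reconstruct $|\Hess f|_{HS}^2$ pointwise I would vary $g$ over a countable family $\{g_i\}\subset \mathbb{D}_\infty$ whose gradients generate the tangent $L^\infty(\m)$-module $L^2(TX)$ in Gigli's sense. Taking suprema of the Cauchy--Schwarz inequalities over finite linear combinations of the $g_i$ realises $|\Hess f|_{HS}^2$ as a pointwise supremum of quadratic expressions in $\Hess f(\nabla g_i,\nabla g_j)$, delivering the desired bound $\boldsymbol\Gamma_2(f)-K|\nabla f|^2\m\ge |\Hess f|_{HS}^2\m$ as measures. Non-negativity of the singular part is then automatic since the dominating term is absolutely continuous, and $|\nabla f|^2\in W^{1,2}(X)$ is recovered from the a posteriori estimate $|\nabla|\nabla f|^2|\le 2|\Hess f|_{HS}|\nabla f|$ combined with $|\nabla f|\in L^\infty(\m)$ and $|\Hess f|_{HS}\in L^2(\m)$. \emph{The main obstacle} is precisely this last step: passing from a parametrised family of measure inequalities to a single pointwise bound involving the Hilbert--Schmidt norm of a tensor requires a measurable selection inside Gigli's tangent module and uses Hilbertianity in an essential way, since without the parallelogram identity the polarisation would not close into a symmetric Hessian and the whole argument would fail outside the $\RCD$ framework.
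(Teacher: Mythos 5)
There is a genuine gap in the central self-improvement step. Expanding the Bochner inequality along the affine perturbations $f+\epsilon g$ only yields the Cauchy--Schwarz inequality between the measures ${\bf \Gamma}_2(\cdot)-K\Gamma(\cdot)\m$; it cannot, by itself, produce any Hessian term. Moreover, the identification you claim -- that the absolutely continuous density of ${\bf \Gamma}_2(f,g)-K\langle\nabla f,\nabla g\rangle\m$ equals $\Hess f(\nabla g,\nabla g)$ -- is false: already in the smooth case that density is $\langle \Hess f,\Hess g\rangle_{HS}+(\ric-K)(\nabla f,\nabla g)$, a symmetric expression in the two Hessians, not a quadratic form of $\Hess f$ evaluated on $\nabla g$. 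The mechanism that actually brings $\Hess f(\nabla g,\nabla h)$ into play in the proofs of Savar\'e and Gigli (following Bakry's $\Gamma$-calculus) is to insert \emph{nonlinear} combinations of test functions, e.g.\ $f+\lambda\, gh$ or $\Phi(f,g_1,\dots,g_n)$ with polynomial $\Phi$, into the measure-valued Bochner inequality and to exploit the Leibniz and chain rules $\Delta(gh)=g\Delta h+h\Delta g+2\langle\nabla g,\nabla h\rangle$ and $\Gamma(f,gh)=g\Gamma(f,h)+h\Gamma(f,g)$; it is precisely the term $\langle\nabla f,\nabla\langle\nabla g,\nabla h\rangle\rangle$ arising from this expansion that, after optimizing the resulting nonnegative quadratic form in the parameters, reconstructs $|\Hess f|_{HS}^2$ and yields ${\bf \Gamma}_2(f)\geq\left[K|\nabla f|^2+|\Hess f|_{HS}^2\right]\m$. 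Without this nonlinear input your argument never closes, Hilbertianity or not.

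A secondary, but still real, problem is the order of the regularity assertions. The paper's definition of $D({\bf \Delta})$ presupposes $|\nabla f|^2\in W^{1,2}(X)$, so you cannot first build the measure ${\bf \Delta}|\nabla f|^2$ by Riesz representation and only ``recover'' $|\nabla f|^2\in W^{1,2}(X)$ a posteriori from $|\nabla|\nabla f|^2|\leq 2|\Hess f|_{HS}|\nabla f|$; that chain-rule bound itself relies on the second-order calculus the theorem is meant to establish. In the cited proofs the Sobolev regularity of $\Gamma(f)=|\nabla f|^2$ for test functions is proved first, via the Bakry--\'Emery gradient estimate and heat-semigroup regularization, and only then is the measure-valued ${\bf \Gamma}_2$ defined and self-improved. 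You should restructure the argument accordingly: (1) semigroup estimates give $|\nabla f|^2\in W^{1,2}(X)$ and the measure ${\bf \Gamma}_2(f)\geq K|\nabla f|^2\m$; (2) the multilinear, product-based self-improvement gives the Hessian term; (3) nonnegativity of the singular part then follows as you say.
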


\subsection{$1D$-localization}\label{subsec:1D}

\noindent
In this section we will recall  basic facts about the localization technique introduced by Cavalletti and Mondino {for 1-Lipschitz
functions as a nonsmooth analogue of Klartag's needle decomposition: needle refers to any geodesic along
which the Lipschitz function attains its maximum slope, {\red also called transport rays here and by Klartag and others \cite{EvansGangbo99,FeldmanMcCann02,Klartag17}.}}
 The presentation follows Sections 3 and 4 in \cite{cavmon}. We assume familiarity with basic concepts in optimal transport (for instance \cite{viltot}).

Let $(X,\de,\m)$ be a proper metric measure space with $\supp\m =X$ as we always assume.

Let $u:X\rightarrow \mathbb{R}$ be a $1$-Lipschitz function. Then  {\red the \it transport ordering}
\begin{align*}
\Gamma_u:=\{(x,y)\in X\times X : u(y)-u(x)=\de(x,y)\}
\end{align*}
is a  $\de$-cyclically monotone set, and one defines 
$\Gamma_u^{-1}=\{(x,y)\in X\times X: (y,x)\in \Gamma_u\}$.

Note that we switch orientation in comparison to \cite{cavmon} where Cavalletti and Mondino define $\Gamma_u$ as $\Gamma_u^{-1}$.

The union $\Gamma_u \cup \Gamma_u^{-1}$ defines a relation $R_u$ on $X\times X$, and $R_u$ induces the {\it transport set with endpoints and branching points}
$$\mathcal T_{u,e}:= P_1(R_u\backslash \{(x,y):x=y\in X\})\subset X$$
where $P_1(x,y)=x$. For $x\in \T_{u,e}$ one defines $\Gamma_u(x):=\{y\in X:(x,y)\in \Gamma_u\}, $
and similarly  $\Gamma_u^{-1}(x)$ and $R_u(x)$. Since $u$ is $1$-Lipschitz,
 $\Gamma_u, \Gamma_u^{-1}$ and $R_u$ are closed{\red ,  as are} $\Gamma_u(x), \Gamma_u^{-1}(x)$ and $R_u(x)$.

The sets of {\it forward} and {\it backward branching points}, $A_+ \ \&\ A_-$, are defined respectively as
\begin{align*}
A_{+/-}\!:=\!\{x\in \mathcal T_{u,e}: \exists z,w\in \Gamma_u(x)/\Gamma_u^{-1}(x) \mbox{ \& } (z,w)\notin R_u\}.
%
\end{align*}
Then one considers the {\it (nonbranched) transport set} as $\mathcal T_u:=\mathcal T_{u,e}\backslash (A_+ \cup A_-)$ and the {\it (nonbranched) transport relation} as the restriction of $R_u$ to $\mathcal T_u\times \mathcal T_u$.

The sets
$\T_{u,e}$, $A_{+}$ and $A_-$ are $\sigma$-compact (\cite[Remark 3.3]{cavmon} and \cite[Lemma 4.3]{cavom} respectively), and $\T_u$ is a  Borel set.
In \cite[Theorem 4.6]{cavom} Cavalletti shows that the restriction of $R_u$ to $\mathcal T_u\times \mathcal T_u$ is an equivalence relation.
Hence, from $R_u$ one obtains a partition of $\mathcal T_u$ into a disjoint family of equivalence classes $\{X_{\alpha}\}_{\alpha\in Q}$. A section is a map $s:\T_u\rightarrow \T_u$  such that if $(x,s(x)) \in R_u$ and $(y,x)\in R_u$ then $s(x)=s(y)$. By  \cite[Proposition 5.2]{cavom} there exists a measurable section $s$, and the quotient space $Q$ can be identified with the image of $\T_u$ under this map $s$.  Hence,  we  can and will  consider $Q$ as a subset of $X$, namely the image of $s$, equipped with the induced measurable structure

The quotient map $\mathfrak Q:\T_u\rightarrow Q$  given by the measurable section $s$ is measurable, and we set $\mathfrak q:= \mathfrak Q_{\#}\left[\m|_{\T_u}\right]$. {Hence $\mathfrak q$ is a Borel measure on $X$. By inner regularity we replace $Q$ with a Borel set $Q'\subset Q$ such that $\mathfrak q(Q\backslash Q')=0$ and in the following we denote $Q'$ by $Q$}  {(compare with \cite[Proposition 3.5]{cavmon} and the following remarks).}

Every $X_{\alpha}$, $\alpha\in Q$, is isometric to an interval
$I_\alpha\subset\mathbb{R}$ (c.f. \cite[Lemma 3.1]{cavmon} and the comment after Proposition 3.7 in \cite{cavmon}) via a distance preserving map $\gamma_{\alpha}:I_\alpha \rightarrow X_{\alpha}$ where $\gamma_\alpha$ is parametrized such that $d(\gamma_\alpha(t), s(\gamma_\alpha(t)))=\sgn(\gamma_\alpha(t))t$, $t\in I_\alpha$, and  where $\sgn x$ is the sign of $u(x)-u(s(x))$. The map $\gamma_{\alpha}:I_\alpha\rightarrow X$ extends to a geodesic also  denoted $\gamma_{\alpha}$ and defined on the closure $\overline{I}_{\alpha}$ of $I_{\alpha}$. We set $\overline{I}_{\alpha}=[a(X_{\alpha}),b(X_{\alpha})]$.


In \cite[Theorem 3.3]{cav-mon-lapl-18}, Cavalletti and Mondino 
%
prove:

\begin{theorem}[Disintegration into needles/transport rays]\label{T:CM disintegration}
Let $(X,\de,\m)$ be a  geodesic metric measure space with $\supp\m =X$ and $\m$ $\sigma$-finite. Let $u:X\rightarrow \mathbb{R}$ be a $1$-Lipschitz function,  let $\{X_{\alpha}\}_{\alpha\in Q}$ be the induced partition of $\mathcal T_u$ via $R_u$, and let $\mathfrak Q: \T_u\rightarrow Q$ be the induced quotient map as above.
Then, there exists a unique {strongly consistent disintegration} $\{\m_{\alpha}\}_{\alpha\in Q}$ of $\m|_{\T_u}$ {with respect to} $\mathfrak Q$.
\end{theorem}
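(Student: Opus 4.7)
The plan is to reduce the statement to an application of the classical measurable disintegration theorem for $\sigma$-finite Borel measures on standard Borel spaces (as in Fremlin's Measure Theory, Vol.~4, or Ambrosio--Gigli--Savar\'e), once the quotient structure has been identified as standard Borel. First I would observe that $\mathcal{T}_u$ is a Borel subset of the Polish space $X$, as recalled above, and that $\m|_{\mathcal{T}_u}$ is $\sigma$-finite since $\m$ is. The measurable section $s:\mathcal{T}_u\to \mathcal{T}_u$ of $R_u|_{\mathcal{T}_u\times\mathcal{T}_u}$ from \cite{cavom} realizes the quotient $Q$ as a Borel subset of $X$, and the associated quotient map $\mathfrak{Q}=s$ is then Borel measurable. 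Pushing forward produces the $\sigma$-finite Borel measure $\mathfrak{q}=\mathfrak{Q}_{\#}[\m|_{\mathcal{T}_u}]$ on $Q$.

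Next, I would apply the disintegration theorem to the Borel measurable map $\mathfrak{Q}:(\mathcal{T}_u,\m|_{\mathcal{T}_u})\to (Q,\mathfrak{q})$ between standard Borel spaces. This yields an essentially unique measurable family $\{\m_\alpha\}_{\alpha\in Q}$ of $\sigma$-finite Borel measures such that for every Borel set $B\subset \mathcal{T}_u$ the function $\alpha\mapsto \m_\alpha(B)$ is $\mathfrak{q}$-measurable and
\[
\m|_{\mathcal{T}_u}(B)=\int_Q \m_\alpha(B)\, d\mathfrak{q}(\alpha).
\]
Uniqueness up to $\mathfrak{q}$-null modification is part of the abstract theorem.

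Finally, I would verify strong consistency, namely that $\m_\alpha$ is concentrated on $\mathfrak{Q}^{-1}(\alpha)=X_\alpha$ for $\mathfrak{q}$-almost every $\alpha\in Q$. Since $s$ is constant on each equivalence class of $R_u|_{\mathcal{T}_u\times\mathcal{T}_u}$, the fibers of $\mathfrak{Q}$ coincide with the transport rays $X_\alpha$, which are themselves Borel. Standard Borelness of $(Q,\mathfrak{q})$ together with countable generation of its $\sigma$-algebra then yields strong consistency, i.e.\ $\m_\alpha(\mathcal{T}_u\setminus X_\alpha)=0$ for $\mathfrak{q}$-a.e.\ $\alpha$. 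This is precisely the point where standard Borel-ness of the quotient is used; for non-standard quotients of Borel equivalence relations only weak consistency is available in general. The main technical obstacle is therefore not in the disintegration step itself but in the prior construction of the Borel section $s$, ensuring the quotient carries a standard Borel structure; with that in hand the disintegration theorem applies directly.
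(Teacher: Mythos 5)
Your proposal is correct and follows essentially the same route as the proof this paper relies on (the theorem is quoted from Cavalletti--Mondino, whose argument is exactly: Borel transport set, Cavalletti's measurable section making the quotient standard Borel up to a null set, the abstract disintegration theorem for $\sigma$-finite measures, and strong consistency from the countably generated quotient structure). The only small imprecision is that $s(\T_u)$ is a priori only analytic rather than Borel, which is handled--as the paper itself notes--by replacing $Q$ with a Borel subset of full $\mathfrak q$-measure via inner regularity.
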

The following is {\cite[Lemma 3.4]{cav-mon-lapl-18}}.
\begin{lemma}[Negligibility of branching points]
\label{somelemma}
Let $(X,d,\m)$ be an essentially nonbranching $MCP(K,N)$ space, $K\in \R$, $N\in (1,\infty)$, with $\supp \m=X$.
Then, for any $1$-Lipschitz function $u:X\rightarrow \R$, it follows $\m(\T_{u,e}\backslash \T_u)=0$.
\end{lemma}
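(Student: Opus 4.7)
The plan is to argue by contradiction: if the branching set has positive measure, then by transporting its points along two genuinely different forward (or backward) transport rays one produces two optimal dynamical plans whose intermediate marginals are absolutely continuous with respect to $\m$ (thanks to $\MCP(K,N)$) but which share a common initial portion, contradicting essentially non-branching. By symmetry under replacing $u$ by $-u$ (which swaps $A_+$ and $A_-$), it suffices to prove $\m(A_+)=0$.

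\textbf{Step 1 (measurable selection of divergent endpoints).} Since $\Gamma_u$ is closed and $R_u$ is Borel, the set
\[
B:=\{(x,z,w)\in A_+\times X\times X:z,w\in \Gamma_u(x),\ (z,w)\notin R_u\}
\]
is Borel. As $\sigma$-compact Polish spaces admit Borel measurable selection, I can choose Borel maps $z,w:A_+\to X$ with $(x,z(x),w(x))\in B$ for every $x\in A_+$. By standard inner regularity and by restricting further to sets where $\de(x,z(x))$, $\de(x,w(x))$, $\de(z(x),w(x))$ are bounded away from $0$ and from $\infty$, and where $\de(z(x),w(x))\geq\eta$ for some fixed $\eta>0$, I reduce to a compact subset $A\subset A_+$ with $0<\m(A)<\infty$ on which $z,w$ are continuous and the three distances are uniformly comparable.

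\textbf{Step 2 (two optimal transport plans starting from $A$).} Let $\mu_0:=\m(A)^{-1}\m|_A$, $\mu_1^z:=z_\#\mu_0$, $\mu_1^w:=w_\#\mu_0$. Because $z(x),w(x)\in\Gamma_u(x)$, the couplings $(\mathrm{id},z)_\#\mu_0$ and $(\mathrm{id},w)_\#\mu_0$ are supported in the $\de$-cyclically monotone set $\Gamma_u$, hence are $W_2$-optimal; concatenating with geodesic selections along the transport rays $X_{\mathfrak Q(x)}$ yields dynamical optimal plans $\Pi^z,\Pi^w\in\OptGeo(\mu_0,\mu_1^z),\OptGeo(\mu_0,\mu_1^w)$.

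\textbf{Step 3 ($\MCP$ gives absolutely continuous midpoints).} Since $(X,\de,\m)$ is an essentially nonbranching $\MCP(K,N)$ space and $\mu_0\ll\m$ with bounded density and bounded support, the measures $\mu^z_t:=(e_t)_\#\Pi^z$ and $\mu^w_t:=(e_t)_\#\Pi^w$ are $\m$-absolutely continuous for every $t\in[0,1)$, with $L^\infty$ densities controlled by $\MCP$ distortion (this is the standard Cavalletti--Mondino consequence of $\MCP$). In particular, at small $t$, $\mu^z_t$ and $\mu^w_t$ are both supported near $A$.

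\textbf{Step 4 (deriving the contradiction).} Here is the crux. For each $x\in A$ the two geodesic germs starting at $x$ and heading towards $z(x)$ and $w(x)$ lie in different equivalence classes of $R_u$, because $(z(x),w(x))\notin R_u$ would propagate back along the rays to $(x,x)\in R_u$ being forced to extend consistently in both directions. Thus the half-plan $\tfrac12(\Pi^z+\Pi^w)$ is again a dynamical optimal plan between $\mu_0$ and $\tfrac12(\mu_1^z+\mu_1^w)$, it is supported on a set of positive measure of pairs of geodesics that share the initial point $x$ but separate for $t>0$. Feeding this plan to the essentially non-branching assumption produces a positive-mass set of branching geodesics inside the support of an optimal dynamical plan between two $\m$-absolutely continuous measures (obtained by restriction), which is the prohibited behaviour. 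Hence $\m(A)=0$, and since this holds for every such reduction, $\m(A_+)=0$.

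\textbf{Expected main obstacle.} The delicate point is Step 4: one must produce optimal dynamical plans between $\m$-absolutely continuous measures that witness genuine branching, rather than two separate plans that merely happen to share the same initial marginal. The standard trick is to restrict $\Pi^z$ and $\Pi^w$ to a common small time window $[0,\varepsilon]$, extend them to joint plans between two $\mu_0$-absolutely continuous marginals obtained from $\MCP$, and then invoke essentially non-branching in its dynamical form; getting all measurability and restriction arguments correct, in particular showing that the two rays really lie in different $R_u$-classes for a set of $x$ of positive $\mu_0$-measure, is where one has to be careful with the selection from Step 1.
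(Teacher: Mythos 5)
Your Step 4 contains a genuine gap. The definition of a nonbranching family of geodesics used in the paper (and in the $\RCD/\CD$ literature) requires that $e_{[0,\epsilon]}|_F$ be injective for every $\epsilon\in(0,1)$: a violation needs two distinct geodesics in the support that agree on a whole initial \emph{interval} $[0,\epsilon]$ with $\epsilon>0$. The two geodesics you produce from $x$ toward $z(x)$ and toward $w(x)$ are guaranteed to share only the single point $x$ at $t=0$; nothing you have said prevents them from diverging immediately. In that case $\tfrac12(\Pi^z+\Pi^w)$ does \emph{not} witness branching and there is no contradiction with the essentially nonbranching hypothesis, even after the $\MCP$-restriction steps are carried out correctly. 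The obstacle you flag at the end (measurability and the $R_u$-class separation) is real but secondary; the primary problem is the one above.

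The standard way to repair this is to extend the two geodesics backward: if $x\in A_+$ is not an initial point, pick $y\neq x$ with $(y,x)\in\Gamma_u$, observe that the concatenations $y\rightsquigarrow x\rightsquigarrow z(x)$ and $y\rightsquigarrow x\rightsquigarrow w(x)$ are both geodesics lying on $\Gamma_u$ (their lengths add up because $u$ is $1$-Lipschitz and $y,x,z(x)$ are $\Gamma_u$-chained), and now these two geodesics genuinely agree on the segment $[y,x]$ and separate afterwards, giving the required branching. But this requires a measurable choice of such $y$ and, more importantly, it silently excludes the case where $x$ lies in $\mathfrak a_u$ (no point behind $x$). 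Treating the set $A_+\cap\mathfrak a_u$ requires a different, more delicate argument and is precisely where the proof in the cited reference \cite[Lemma 3.4]{cav-mon-lapl-18} (which the paper takes as given rather than reproving) has to do real work. As written, your sketch would only establish $\m(A_+\setminus\mathfrak a_u)=0$ (and symmetrically $\m(A_-\setminus\mathfrak b_u)=0$), not the full conclusion.

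A secondary, smaller issue: you write that the two germs ``lie in different equivalence classes of $R_u$,'' but $R_u$ is only an equivalence relation after restriction to $\T_u$, and the points $x\in A_+$ are precisely those excluded from $\T_u$, so this phrasing does not literally make sense; the clean statement is simply that $(z(x),w(x))\notin R_u$ forces the two geodesics to be distinct.
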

The initial and final points are defined  by
\begin{align*}
\mathfrak a_u \!:=\! \left\{ x\in \T_{u,e}: \Gamma^{-1}_u(x)=\{x\}\right\}, \ \
\mathfrak b_u \!:=\! \left\{ x\in \T_{u,e}: \Gamma_u(x)=\{x\}\right\}.
\end{align*}
In \cite[Theorem 7.10]{cavmil} it was proved that under the assumption of the previous lemma there exists $\hat Q\subset Q$ with $\mathfrak q(Q\backslash \hat Q)=0$ such that for $\alpha\in \hat Q$ one has $\overline{X_\alpha}\backslash \T_u\subset \mathfrak a_u\cup \mathfrak b_u$.
In particular, for $\alpha\in \hat Q$ we have
\begin{align}\label{somehow}
R_u(x)=\overline{X_\alpha}\supset X_\alpha \supset (R_u(x))^{\circ} \ \ \forall x\in \mathfrak Q^{-1}(\alpha)\subset \T_u.
\end{align}
where $(R_u(x))^\circ$ denotes the relative interior of the closed set $R_u(x)$.

The following is \cite[Theorem 3.5]{cav-mon-lapl-18}.
\begin{theorem}[Factor measures inherit curvature-dimension bounds]\label{th:1dlocalisation} Let $K\in \R$, $N\in (1,\infty)$ and let $(X,\de,\m)$ be essentially nonbranching  and $MCP(K,N)$   with $\supp\m=X$.
For any $1$-Lipschitz function $u:X\rightarrow \R$, {\red let $\{\m_{\alpha}\}_{\alpha\in Q}$ denoted the disintegration of $\m|_{\T_u}$ 
from Theorem \ref{T:CM disintegration} which is strongly consistent with the quotient map $\mathfrak Q:\T_u \rightarrow Q$.

Then} there exists $\tilde Q$ such that $\mathfrak q(Q\backslash \tilde Q)=0$ and $\forall \alpha\in \tilde Q$, $\m_{\alpha}$ is a Radon measure with $d\!\m_{\alpha}=h_{\alpha}d\mathcal{H}^1|_{X_{\alpha}}$ and $(X_{\alpha}, d, \m_{\alpha})$ satisfies $MCP(K,N)$. If $(X,\de,\m)$ satisfies the condition $\CD(K,N)$, then $(X, d, \m_\alpha)$ satisfies the condition $\CD(K,N)$ as well.
\end{theorem}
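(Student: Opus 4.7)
\emph{Proof proposal.} The plan is to combine the disintegration of Theorem \ref{T:CM disintegration} with a needle-wise optimal transport argument to reduce the $\MCP(K,N)$ (resp.\ $\CD(K,N)$) condition on the ambient space to a one-dimensional condition on each ray $(X_\alpha, \de, \m_\alpha)$, for which a standard characterization expresses everything as a $(K,N-1)$-concavity of $h_\alpha^{1/(N-1)}$. I work throughout on the full-$\mathfrak q$-measure set $\hat Q$ of \eqref{somehow}, on which $\overline{X_\alpha}\setminus X_\alpha\subset \mathfrak a_u\cup\mathfrak b_u$, and use Lemma \ref{somelemma} to discard $\T_{u,e}\setminus\T_u$.

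The first step is to show that for $\mathfrak q$-a.e.\ $\alpha$ the measure $\m_\alpha$ is Radon and absolutely continuous with respect to $\mathcal{H}^1|_{X_\alpha}$. Fix an interior point $\gamma_\alpha(t_0)$ of a ray. Using the measurable section $s$ and the isometric parametrizations $\gamma_\beta$, I would select a small Borel family of nearby needles $\{X_\beta\}_{\beta\in E}$ and shrinking normalized pushforwards of $\mathcal{H}^1$ along each $\gamma_\beta$ to construct measures $\mu_0^\varepsilon,\mu_1^\varepsilon\in\mathcal{P}_b^2(X,\m)$, with $\mu_0^\varepsilon$ concentrated near $s(\gamma_\alpha(t_0))$ and $\mu_1^\varepsilon$ in a ray-wise $\varepsilon$-neighbourhood of $\gamma_\alpha(t_0)$. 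Since the supports are separated along the values of $u$, any $W_2$-optimal coupling is concentrated on $R_u$, and essential non-branching forces the associated dynamical plan to disintegrate into plans supported on individual $X_\beta$. Applying the ambient $\MCP(K,N)$ inequality along this transport and integrating against $\mathfrak q$, sending $\varepsilon\downarrow 0$ produces the density representation $d\m_\alpha = h_\alpha\, d\mathcal{H}^1|_{X_\alpha}$ with $h_\alpha$ locally bounded away from $0$ on the interior of $X_\alpha$.

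With absolute continuity in hand, I would transfer the curvature-dimension inequality ray by ray. Given $\mu_0,\mu_1\in\mathcal{P}_b^2(X_\alpha,\m_\alpha)$, view them as measures on $X$ and pick $\Pi\in\OptGeo(\mu_0,\mu_1)$ supplied by the ambient $\CD(K,N)$ condition; since $\supp\mu_0\cup\supp\mu_1\subset X_\alpha$, every $\gamma\in\supp\Pi$ lies in $\Gamma_u\cup\Gamma_u^{-1}$ and, by essential non-branching, stays inside $X_\alpha$. The decomposition $\m|_{\T_u}=\int\m_\beta\, d\mathfrak q(\beta)$ then collapses to the single factor $\beta=\alpha$ on the support of the transport, so the inequality \eqref{ineq:cd} for $\m$ becomes exactly the same inequality for $\m_\alpha$; the analogous one-sided argument with $\mu_0=\delta_x$ recovers the $\MCP(K,N)$ version. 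The usual one-dimensional characterization then yields the required $(K,N-1)$-concavity of $h_\alpha^{1/(N-1)}$ and closes the proof.

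The main obstacle is the absolute continuity step. One must simultaneously guarantee that the needle-wise transport is genuinely optimal, that its dynamical plan is concentrated on non-branching geodesics lying in a single $X_\beta$, and that the resulting density bound survives the Fubini decomposition as $\varepsilon\downarrow 0$. All three requirements hinge on the $\m$-negligibility of the branching and endpoint sets $A_+\cup A_-$ and $\mathfrak a_u\cup\mathfrak b_u$, that is, on Lemma \ref{somelemma} together with \eqref{somehow}; without these, mass could leak across different rays and the entire Fubini-based reduction would collapse.
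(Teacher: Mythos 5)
There is a genuine gap, and it sits at the heart of your second step. You take $\mu_0,\mu_1\in\mathcal P_b^2(X_\alpha,\m_\alpha)$ and apply the ambient $\CD(K,N)$ (or $\MCP(K,N)$) condition to them ``viewed as measures on $X$''. But $\m_\alpha$ is concentrated on the single needle $X_\alpha$, which in general has $\m$-measure zero, so these marginals are singular with respect to $\m$; the curvature-dimension condition as defined in the paper (inequality \eqref{ineq:cd}) only speaks about marginals in $\mathcal P_b^2(X,\m)$, i.e.\ with densities $\rho_0,\rho_1$ with respect to $\m$, and therefore gives no information whatsoever about transports between measures carried by one ray. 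Essential non-branching cannot repair this: the problem is not that mass might leak across rays, but that the hypothesis you want to invoke is vacuous for the marginals you feed it. Your first step has a related circularity: the measures $\mu_0^\varepsilon,\mu_1^\varepsilon$ are built as pushforwards of $\mathcal H^1$ along a Borel family of needles, and asserting that these lie in $\mathcal P_b^2(X,\m)$ already presupposes a disintegration of $\m$ with $\mathcal H^1$-absolutely continuous conditionals --- which is exactly what is to be proved.

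For comparison, the paper does not prove this statement at all; it is quoted verbatim from Cavalletti--Mondino \cite{cav-mon-lapl-18}, and the proof there (building on Bianchini--Cavalletti, Cavalletti--Huesmann and Cavalletti--Milman) is structured precisely to avoid the two pitfalls above. Absolute continuity of the conditionals is obtained by a translation argument: one compares $\m(A)$ with the measure of the set obtained by sliding a compact $A\subset\T_u$ a distance $t$ along the rays, using $\MCP(K,N)$ to control the distortion, and deduces non-atomicity and absolute continuity of $\m_\alpha$ with respect to $\mathcal H^1|_{X_\alpha}$. The one-dimensional $\MCP$/$\CD$ property of the needles is then obtained by transporting sets of \emph{positive} $\m$-measure assembled from bundles of nearby rays (``plates''), applying the ambient inequality to these genuinely $\m$-absolutely continuous marginals, disintegrating through the quotient map, and passing to the limit to extract, for $\mathfrak q$-a.e.\ $\alpha$, the $(K,N)$-concavity inequality for $h_\alpha^{1/(N-1)}$ along $X_\alpha$; essential non-branching enters to guarantee that these plate transports ride along individual rays so that the inequality survives the disintegration. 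If you want to salvage your outline, you would have to replace the needle-wise application of $\CD(K,N)$ by such a plate construction and a limiting argument, at which point you have essentially reproduced the Cavalletti--Mondino proof.
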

%
\begin{remark}\label{rem:kuconcave}
The theorem
yields that $h_{\alpha}$ is locally Lipschitz continuous on $(a(X_\alpha),b(X_\alpha))$ \cite[Section 4]{cavmon}.
In particular $h_\alpha$ is differentiable for $\mathcal L^1$-a.e. $r\in (a(X_\alpha), b(X_\alpha))$ and
\begin{align*}
\frac{\overline{d^+}}{dr} h_\alpha(r) \!=\! \limsup_{h\downarrow 0} \frac{h_\alpha(r+h)-h_\alpha(r)}{h},\ \ 
\frac{\overline{d^-}}{dr} h_\alpha(r)\! =\! \limsup_{h\uparrow  0} \frac{h_\alpha(r+h)-h_\alpha(r)}{h}
\end{align*}
both exist in $\R$ for all $r\in (a(X_\alpha), b(X_\alpha))$.  
The Bishop-Gromov volume monotonicity implies that $h_\alpha$ can be extended to a continuous function on $[a(X_\alpha),b(X_\alpha)]$ \cite[Remark 2.14]{cav-mon-lapl-18}. 
%
We  consider $\frac{d}{dr}h_{\alpha}: X_{\alpha}\rightarrow \R$ defined a.e.\ via
$\frac{d}{dr}(h_{\alpha}\circ\gamma_{\alpha})(r)=:\frac{d}{dr}h_{\alpha}( \gamma_\alpha(r))$.
\end{remark}
\begin{remark}[Generic geodesics]\label{def:dagger}
\label{dagger}
We set $Q^\dagger:= \tilde Q \cap \hat Q$,  where $ \tilde Q$  and $\hat Q$ index the transport rays identified between Lemma \ref{somelemma}
and Theorem \ref{th:1dlocalisation}.
 Then, $\mathfrak q(Q\backslash Q^\dagger)=0$ and  for every $\alpha \in Q^\dagger$ the space $(X, \de, h_\alpha \mathcal H^1)$ is $MCP(K,N)$ (or $\CD(K,N)$) and \eqref{somehow} holds. We also set $\mathfrak Q^{-1}(Q^{\dagger})=:\T_u^{\dagger}\subset \T_u$ and $\bigcup_{x\in \T_u^{\dagger}} R_u(x)=:\T_{u,e}^{\dagger}\subset \T_{u,e}$.
\end{remark}
\section{Notions of synthetic lower mean curvature bounds}\label{subsec:meancurvature} 

Let $(X, \de, \m)$ be an $\RCD$ space with $\supp\m=X$ and let $\Omega\subset X$ be an open subset such that $\m(\partial \Omega)=0$.  We set $S:=\partial \Omega=\overline \Omega\backslash \Omega$ and  $\Omega^c:= X\backslash \Omega$. Since $\m(S)=0$, it holds $\partial \Omega^c=S$. The distance function $\de_{\Omega^c}: X\rightarrow \mathbb{R}$ is given by
\begin{align*}
{\red \de_{{\Omega}^c}(x) :=} \inf_{y\in {\Omega}^c} \de(x,y).
\end{align*}
The signed distance function $\de_S$ for $S$ is  given by
\begin{align*}
\de_S:=d_{{\overline\Omega}}- \de_{{\Omega^{c}}}: X\rightarrow \mathbb{R}.
\end{align*}
It follows that $\de_S(x)=0$ if and only if $x\in S$, and  $\de_S\leq 0$ if $x\in \Omega$ and $\de_S\geq 0$ if $x\in \Omega^c$.
It is clear that $\de_S|_{\Omega}= -\de_{\Omega^{ c}}$ and $\de_S|_{\Omega^c}=\de_\Omega$.
Setting $v=\de_S$ we can also write
$$
\de_S(x)= \sign(v(x))\de(\{v=0\},x), \forall x\in X.
$$
Since $(X, \de)$ is a proper geodesic space, $\de_S$ is $1$-Lipschitz \cite[Remark 8.4, Remark 8.5]{cav-mon-lapl-18}. 

%
%
%
Let $\mathcal{T}_{\de_S,e}$ be the transport set of $\de_S$ with end- and branching points. We have $\mathcal{T}_{\de_S, e}\supset X\backslash S$. In particular, we have $\m(X\backslash \T_{\de_S})=0$ by Lemma \ref{somelemma} and $\m(S)=0$.
Therefore, by Theorem \ref{th:1dlocalisation} the $1$-Lipschitz function $\de_S$ induces a partition $\left\{X_{\alpha}\right\}_{\alpha\in Q}$ of $X$ up to a set of measure zero for a measurable quotient space $Q$, and a disintegration $\{\m_{\alpha}\}_{\alpha\in Q}$ that is strongly consistent with the partition. The subset $X_{\alpha}$, $\alpha \in Q$, is the image of a distance preserving map $\gamma_{\alpha}:I_\alpha\rightarrow X$ for an interval $I_\alpha\subset \mathbb R$ with $\overline{I}_\alpha= [a(X_\alpha), b(X_\alpha)]\ni 0$.

We consider $Q^\dagger\subset Q$ as in Remark \ref{dagger}. One has the representation
\begin{equation*}
\m(B)=\int_{Q} \m_{\alpha}(B) d\mathfrak q(\alpha)=\int_{Q^{\dagger}} \int_{\gamma_{\alpha}^{-1}(B)} h_{\alpha}(r) dr d\mathfrak q(\alpha)
\end{equation*}
for all Borel subsets $B \subset X$.
For a transport ray $X_{\alpha}$ one has
{$\de_S(\gamma_\alpha(b(X_{\alpha})))\geq 0$ and
$\de_S(\gamma_\alpha(a(X_{\alpha})))\leq 0$} (for instance compare with \cite[Remark 4.12]{cav-mon-lapl-18}).

Let us recall another result of Cavalletti-Mondino{:}

\begin{theorem}[{Laplacian of  signed distance functions} {\cite[Corollary 4.16]{cav-mon-lapl-18}}]\label{thm:cm}
Let $(X,\de,\m)$ be a $\CD(K,N)$ space, and $\Omega$ and $S=\partial \Omega$ as above. Then
$\de_S|_{X\backslash S}\in D({\bf \Delta}, X\backslash S)$, and one element of ${\bf \Delta}_{X\backslash S}(d_S|_{X\backslash S})$ that we also denote with ${\bf \Delta }_{X\backslash S}(\de_S|_{X\backslash S})$ is the Radon functional on $X\backslash S$ given by the representation formula
\begin{align*}
{\bf \Delta}_{X\backslash S}(\de_S|_{X\backslash S})&= (\log h_{\alpha})'\m|_{X\backslash S}\\
&\ \ \  \ \ \ +\int_Q ( h_{\alpha}\delta_{a(X_{\alpha})\cap \{\de_S {\red <} 0\}} - h_{\alpha}\delta_{b(X_{\alpha})\cap \{\de_S {\red >}0\}} ) d\mathfrak q(\alpha).
\end{align*}
The Radon functional ${\bf \Delta}_{X\backslash S}( \de_S|_{X\backslash S})$ can be represented as the difference of two measures $[{\bf \Delta} _{X\backslash S}(\de_S|_{X\backslash S})]^+$ and $[{\bf \Delta}_{X\backslash S} (d_S|_{X\backslash S})]^-$ such that
\begin{align*}
[{\bf \Delta}_{X\backslash S}( d_S|_{X\backslash S})]^+_{abs} - [{\bf \Delta}_{X\backslash S}(d_S|_{X\backslash S})]^-_{abs} =  (\log h_{\alpha})' \ \ \m\mbox{-a.e.}
\end{align*}
In particular, $ -(\log h_{\alpha})'$ coincides with a measurable function $\m$-a.e.
\end{theorem}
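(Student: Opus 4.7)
The plan is to combine the one-dimensional localization of Theorem \ref{th:1dlocalisation}, applied to the $1$-Lipschitz function $u := \de_S$, with pointwise integration by parts along each needle $X_\alpha$. Since $\m(S)=0$ and $\m(X\setminus \T_u)=0$ by Lemma \ref{somelemma}, the disintegration $\m = \int_{Q^\dagger} h_\alpha \mathcal H^1|_{X_\alpha} d\mathfrak q(\alpha)$ exhausts $\m|_{X\setminus S}$ up to null sets, and $\de_S$ is monotone along each needle with $S = \{\de_S = 0\}$ meeting each ray in at most one point; I choose the section so that the crossing point (when it exists) is $\gamma_\alpha(0)$.

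First I would establish the reduction identity: for every $f \in \lip_c(X\setminus S)$,
\begin{align*}
\int D^{\pm} \de_S(\nabla f)\, d\m = \int_{Q^\dagger} \int_{I_\alpha} (f\circ\gamma_\alpha)'(r)\, h_\alpha(r)\, dr\, d\mathfrak q(\alpha),
\end{align*}
which rests on $|\nabla \de_S|=1$ $\m$-a.e.\ on $\T_u$ and on the one-dimensional fact that $(f\circ\gamma_\alpha)'$ computes the directional derivative of $f$ along the ray. I would then integrate by parts on each $I_\alpha$, splitting at the zero of $\de_S\circ\gamma_\alpha$ when the ray crosses $S$. The crucial point is that $\supp f$ is compact and contained in $X\setminus S$, so $f\circ\gamma_\alpha$ vanishes in a neighborhood of that split point and no boundary contribution arises there. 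On each remaining piece, the integration by parts, justified by the local Lipschitz regularity of $h_\alpha$ and its continuous extension to the closed interval (Remark \ref{rem:kuconcave}), yields the interior contribution $-\int (f\circ\gamma_\alpha)(\log h_\alpha)'\, h_\alpha\, dr$ plus endpoint terms at $a(X_\alpha)$ and $b(X_\alpha)$; only endpoints lying outside $S$ contribute, giving exactly the atomic terms supported on $a(X_\alpha)\cap\{\de_S<0\}$ and $b(X_\alpha)\cap\{\de_S>0\}$. Reintegrating in $\alpha$ delivers the displayed representation and identifies the resulting Radon functional as an element of ${\bf\Delta}_{X\setminus S}(\de_S|_{X\setminus S})$.

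For the signed decomposition, the endpoint atoms are already of definite sign (non-negative at the $a$-endpoints, non-positive at the $b$-endpoints), so they enter $[{\bf\Delta}]^+$ and $[{\bf\Delta}]^-$ as singular parts, and the absolutely continuous parts of $[{\bf\Delta}]^\pm$ must then coincide with the positive and negative parts of $(\log h_\alpha)'\,\m|_{X\setminus S}$, yielding the displayed identity for $[{\bf\Delta}]^+_{abs}-[{\bf\Delta}]^-_{abs}$. Measurability of $(\log h_\alpha)'$ as an $\m$-a.e.\ defined function on $X\setminus S$ follows from joint measurability of $(\alpha,r)\mapsto h_\alpha(r)$, inherent to the strongly consistent disintegration, together with a.e.\ differentiability on each ray (Remark \ref{rem:kuconcave}) and Fubini applied to the disintegrated form of $\m$. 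The main obstacle I expect is the reduction identity itself: in the genuinely non-Hilbertian $\CD(K,N)$ setting one works with the two one-sided slopes $D^\pm \de_S(\nabla f)$, and one must verify that both collapse to the same $r$-derivative $(f\circ\gamma_\alpha)'$ along $\m$-a.e.\ needle, leveraging the needle structure, the essential non-branching built into the $\CD(K,N)$ hypothesis, and the preceding gradient calculus from \cite{cavmon,cav-mon-lapl-18}.
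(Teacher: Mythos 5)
This theorem is not proved in the paper at all: it is recalled verbatim from Cavalletti--Mondino \cite{cav-mon-lapl-18} (their Corollary 4.16), and your outline follows exactly the strategy of that source -- disintegrate $\m$ along the needles of $\de_S$ (Theorem \ref{T:CM disintegration}, Theorem \ref{th:1dlocalisation}, Lemma \ref{somelemma}), integrate by parts ray by ray using the regularity of $h_\alpha$ from Remark \ref{rem:kuconcave}, observe that $\supp f\subset X\backslash S$ compact forces $f$ to vanish near the crossing point $\gamma_\alpha(0)$, and collect the interior term $(\log h_\alpha)'$ together with the endpoint atoms at $a(X_\alpha)$ and $b(X_\alpha)$, whose signs and supports you identify correctly.

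The one step that is off is your ``reduction identity''. You propose the equality $\int D^{\pm}\de_S(\nabla f)\,d\m=\int_Q\int_{I_\alpha}(f\circ\gamma_\alpha)'\,h_\alpha\,dr\,d\mathfrak q(\alpha)$ for \emph{both} one-sided objects, and you describe the remaining obstacle as checking that $D^+$ and $D^-$ ``collapse to the same derivative''. In a $\CD(K,N)$ space that is not infinitesimally Hilbertian there is no reason for $\int D^+\de_S(\nabla f)\,d\m$ and $\int D^-\de_S(\nabla f)\,d\m$ to coincide, so this equality is not provable in the stated generality -- and it is also more than the theorem needs. The definition of $\de_S|_{X\backslash S}\in D({\bf \Delta},X\backslash S)$ only asks for a Radon functional $T$ with $\int D^-\de_S(\nabla f)\,d\m\le -T(f)\le \int D^+\de_S(\nabla f)\,d\m$, so the correct intermediate step is the pointwise sandwich $D^-\de_S(\nabla f)\le (f\circ\gamma_\alpha)'\le D^+\de_S(\nabla f)$ $\m$-a.e.\ on the transport set: compare $|\nabla(\de_S+\epsilon f)|^2$ with the squared derivative of $\de_S+\epsilon f$ along the transport ray (using $|\nabla \de_S|=1$ a.e.\ there) and let $\epsilon\downarrow 0$, respectively $\epsilon\uparrow 0$. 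With this two-sided bound in place, your ray-by-ray integration by parts produces exactly the required inequalities for the candidate functional, and the rest of your argument (definite signs of the endpoint atoms, identification of the absolutely continuous parts, measurability of $(\alpha,r)\mapsto h_\alpha(r)$ via the disintegration) goes through; in the $\RCD$ setting of this paper $D^+=D^-$ and your equality is then recovered a posteriori. One small side remark: essential non-branching is not a consequence of $\CD(K,N)$, so it cannot be ``built into'' that hypothesis; it enters through the standing assumptions of the section (and of \cite{cav-mon-lapl-18}), and it is what legitimizes the use of Theorem \ref{th:1dlocalisation}.
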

%

\begin{remark}[Measurability and zero-level selection]\label{R:zero-level selection}
It is easy to see that $A:=\mathfrak Q^{-1}(\mathfrak Q(S\cap \T_{d_S}))\subset \T_{d_S}$ is a measurable subset.
The {\red \it reach}
$A\subset \T_{\de_S}$ is defined such that  $\forall \alpha \in \mathfrak Q(A)$ we have $X_\alpha\cap S =\{\gamma(t_\alpha)\}\neq \emptyset$ for a unique $t_\alpha\in I_{\alpha}$. Then, the map ${\hat s}: \gamma(t)\in A \mapsto \gamma(t_\alpha)\in S\cap \T_{d_S}$ is a measurable section {(i.e.~selection)} on $A\subset \T_{d_S}$, {\red and} one can  identify the measurable set $\mathfrak Q(A)\subset Q$ with $A\cap S$ and 
{\red can} parameterize $\gamma_\alpha$ such that $t_\alpha=0$.

This measurable section $\hat s$ on $A$ is fixed for the rest of the paper.
The {\red reach} $A$ is the union of all  disjoint needles that {\red intersect} with $\partial \Omega$ {\red  -- eventually in $a(X_\alpha)$ (or in $b(X_\alpha)$) provided $a(X_\alpha)$ (respectively $b(X_\alpha)$) belongs already to $I_\alpha$}.  We shall also define {\red the \it inner reach} $B_{in}$ 
as the union of all needles disjoint from
${\red {\Omega^c}}$ and {\red the \it outer reach} $B_{out}$ as the union of all needles disjoint from ${\red \overline\Omega}$.  The superscript $\dagger$ will be used to indicate intersection with $\mathcal T^{\dagger}_{\de_S}$.
Thus
\begin{align*}
\mbox{$
{A}\cap \mathfrak \T_{\de_S}^{\dagger}=: A^{\dagger} \ \mbox{ and }\ \bigcup_{x\in A^{\dagger}}R_{\de_S}(x)=:A_e^{\dagger}.$}
\end{align*}
The sets $A^\dagger$ and $A_e^\dagger$ are measurable, and also
\begin{align}\label{Binout}
\mbox{$
B_{in}^\dagger:=\Omega^\circ \cap \T^\dagger_{\de_S} \backslash A^\dagger\subset \T^\dagger_{\de_S}$ \ and \ $B^\dagger_{out}:=\Omega^c \cap \T^\dagger_{\de_S} \backslash A^\dagger\subset \T_{\de_S}$}
\end{align}
as well as $\bigcup_{x\in B^\dagger_{out}} R_{\de_S}(x)=:B^\dagger_{out, e}$ and $\bigcup_{x\in B^\dagger_{in}}R_{\de_S}(x)=:B^\dagger_{in,e}$
are measurable.
The map $\alpha \in  \mathfrak Q(A^\dagger)\mapsto h_\alpha(0)\in \R$ is  measurable (see \cite[Proposition 10.4]{cavmil}).
\end{remark}
%
%
%

{
 \begin{remark}[{\red Surface measure via} ray maps] \label{rem:surmea}Let us  briefly explain the previous definition from the viewpoint of the ray map \cite[Definition 3.6]{cavmon}
{ or its precursor from the smooth setting \cite{FeldmanMcCann02}.}
For the definition we fix a  measurable {\red extension} $ s_0:\mathcal T_{\de_S}\rightarrow \mathcal  T_{\de_S}$ such that $ s_0|_{A^{\dagger}}=\hat s$ as in Remark \ref{R:zero-level selection}.  As was explained in Subsection \ref{subsec:1D} such a {section} allows us to identify the quotient space $Q$ with a Borel subset in $X$ up to a {\red set of $\mathfrak q$-}measure $0$.
 Following \cite[Definition3.6]{cavmon} we define the ray map 
 \begin{align*}
 g:\mathcal V\subset \mathfrak Q(A\cup B_{in})\times {\red (-\infty,0]} \rightarrow \Omega
 \end{align*}
{\red into $\Omega$ and its domain $\mathcal V$} via its graph
 \begin{align*}
 \mbox{graph}(g)&=\{ (\alpha,t,x)\in \mathfrak Q(A)\times\R\times \Omega:  x\in X_\alpha, -\de(x,\alpha)=t\}\\
 &\hspace{0.5cm}\cup  \{(\alpha,t,x)\in \mathfrak Q(B_{in}) \times \R\times \Omega: x\in X_\alpha, -\de(x, \gamma_\alpha(b(X_\alpha)))=t\}.
 \end{align*}
 This is exactly the ray map as in \cite{cavmon} up to a reparametrisation for $\alpha\in \mathfrak Q(B_{in})$.
{Note that $g(\alpha,0)=\gamma_\alpha(0)=\alpha$ and $g(\alpha,t)=\gamma_\alpha(t)$ if $\alpha\in \mathfrak Q(A)$ but $\gamma_\alpha(t+\de(b(X_\alpha),\alpha))=g(\alpha,t)$ for $\alpha\in \mathfrak Q(B_{in})$.}
 Then the disintegration for a non-negative $\phi\in  C_b({\red \Omega})$ takes the form
 \begin{align*}
 \int_{\red \Omega} \phi\thinspace { \dm}  = \int_Q \int_{\mathcal V_\alpha} \phi\circ g(\alpha,t) h_\alpha\circ g(\alpha,t) d \mathcal L^1{ (t)} d\mathfrak q(\alpha)
 \end{align*}
where $\mathcal V_\alpha = P_2(\mathcal V \cap \{\alpha\}\times \R)\subset \R$ and $P_2(\alpha,t)=t$.  With Fubini's theorem the right hand side is
 \begin{align*}
 \int_{\mathcal V} \phi \circ g(\alpha,t) h_\alpha \circ g(\alpha,t) d(\mathfrak q\otimes \mathcal L^1)(\alpha,t)= \int \int_{\mathcal V_t} \phi  \circ g(\alpha, t) h_\alpha\circ g(\alpha, t) d\mathfrak q(\alpha) d\mathcal L^1(t)
 \end{align*}
 where $\mathcal V_t= P_1(\mathcal V \cap Q\times \{t\})\subset Q$ and $P_1(\alpha,t)=\alpha$. In particular, for $\mathcal L^1$-a.e. $t\in \R$ the set $\mathcal V_t\subset Q$ and  the map $\alpha \mapsto h_{\alpha}\circ g(\alpha,t)$ are measurable.
 Hence, for $\mathcal L^1$-a.e. $t\in \R$ we define ${\red d}\mathfrak p_t{\red (\alpha)}= h_\alpha \circ g(\alpha,t) {\red d} \mathfrak q|_{\mathcal V_t}{\red(\alpha)}$ on $Q$. Then the disintegration takes the form
 \begin{align*}
 \m|_{\Omega}=\m|_{\Omega\cap \mathcal T_{d_S}}= \int (g(\cdot,t)_{\#} \mathfrak p_t) dt.
 \end{align*}
 Note that $\mathcal V_0= \mathcal V\cap Q\times \{0\}= \mathfrak Q(A)\dot \cup \mathfrak Q(B_{in})$ is measurable, one has $\mathcal V_t\subset \mathcal V_0$, $t<0$, and that $\alpha\in \mathcal V_0 \mapsto \lim_{t\uparrow 0} h_\alpha\circ g(\alpha, t)= h_\alpha \circ g(\alpha,0)$ is  measurable.  Hence, we set $d\mathfrak p_0(\alpha) = h_\alpha\circ g(\alpha, t) d\mathfrak q|_{\mathcal V_0}(\alpha)$. 
 
\end{remark}}

\begin{definition}[Backward mean curvature bounded below] Let $(X,d,\m)$ be essentially nonbranching and $MCP(K,N)$ for $K\in \R$ and $N\in (1,\infty)$.

Then $S=\partial \Omega$ has \emph{backward mean curvature bounded from below by $H\in \R$} if the measure {$\mathfrak p_0$ is a Radon measure}, {$h_\alpha\circ g(\alpha,0)>0$ for $\mathfrak q$-a.e. $\alpha \in Q$} and
\begin{align*}{
\overline{\frac{d^-}{dt}} \Big|_{t=0} \int_\Y d\mathfrak{p}_t 
:= \limsup_{h\uparrow 0} \frac{1}{h}\left( \int_\Y d\mathfrak p_h- \int_\Y d\mathfrak p_0\right)\geq H\int_\Y d\mathfrak p_0}
\end{align*}
for any {\red bounded measurable subset $Y\subset Q$.
Moreover,  $S$ has \emph{backward-lower} mean curvature bounded from below by $H$ if the same inequality holds when $\limsup$ is replaced by $\liminf$.
}
\end{definition}

\begin{remark}
Since it is not assumed that $(\mathfrak p_t)_{t>0}$ is a Radon measure, $\int_\Y d\mathfrak p_t$ can be infinite.
\end{remark}
\begin{proposition}[Rescaling]
Let $(X,\de,\m)$ be $MCP(K,N)$ and let $\Omega\subset X$ with backward mean curvature bounded below by $H\in \R$ in $X$. Define $(\tilde X, \tilde \de,\tilde \m)$ with $\tilde X= X$, $\tilde \m=\m$ and $\tilde \de= \epsilon \de$.  Then $\tilde X$ satisfies $MCP(\frac{1}{\epsilon^2}K, N)$ and  $\Omega$ has mean curvature bounded from below by $\frac{1}{\epsilon}H$ in $\tilde X$. 
\end{proposition}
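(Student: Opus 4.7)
The proof is a direct scaling computation; I expect no real obstacle. Under $\tilde{\de} = \epsilon\de$ (with $\tilde X = X$, $\tilde \m = \m$ and $\epsilon > 0$), the signed distance function becomes $\tilde{\de}_S = \epsilon\de_S$, and the 1D-localization of Subsection \ref{subsec:1D} applied to $\tilde\de_S$ on $\tilde X$ produces the same transport ray partition $\{X_\alpha\}_{\alpha\in Q}$ as that of $\de_S$ on $X$; only the isometric parametrizations differ, via $\tilde\gamma_\alpha(\tilde t) = \gamma_\alpha(\tilde t/\epsilon)$ on $\epsilon I_\alpha$. Since one-dimensional Hausdorff measure scales linearly, $\mathcal H^1_{\tilde X}|_{X_\alpha} = \epsilon\thinspace \mathcal H^1_X|_{X_\alpha}$, and $\tilde\m = \m$ gives $\tilde h_\alpha = h_\alpha/\epsilon$ along each ray, while the quotient measure is preserved, $\tilde{\mathfrak q} = \mathfrak q$.

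For the $MCP$ claim I would cite the standard observation that the distortion coefficients $\sigma_{K,N}^{(t)}(\theta)$ are invariant under the joint rescaling $(K,\theta) \mapsto (K/\epsilon^2, \epsilon\theta)$: the explicit formulas for $\sin_{K/N}$ show that numerator and denominator in $\sigma_{K,N}^{(t)}(\theta) = \sin_{K/N}(t\theta)/\sin_{K/N}(\theta)$ pick up the same factor $\epsilon$, leaving $\sigma$ (and hence $\tau$) unchanged. Thus any inequality witnessing $MCP(K,N)$ for $X$ reads literally as one witnessing $MCP(K/\epsilon^2, N)$ for $\tilde X$.

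The heart of the proof is the mean curvature bound. Fix a bounded measurable $Y\subset Q$ and set $g(h):= \int_Y d\mathfrak p_h$. The identities above for $\tilde h_\alpha$ together with the corresponding relation $\tilde g(\alpha, \tilde t) = g(\alpha, \tilde t/\epsilon)$ between ray maps give
\[
\int_Y d\tilde{\mathfrak p}_{\tilde t} \;=\; \epsilon^{-1}\thinspace g(\tilde t/\epsilon),
\]
from which the Radon property of $\tilde{\mathfrak p}_0$ and positivity of $\tilde h_\alpha \circ \tilde g(\alpha,0)$ transfer from $X$. Forming the backward difference quotient at $\tilde t = 0$ and substituting $h = \tilde h/\epsilon$ yields
\[
\frac{1}{\tilde h}\left(\int_Y d\tilde{\mathfrak p}_{\tilde h} - \int_Y d\tilde{\mathfrak p}_0\right) \;=\; \frac{1}{\epsilon^2}\cdot \frac{g(h) - g(0)}{h}.
\]
Since $\epsilon > 0$, the condition $\tilde h \uparrow 0$ is equivalent to $h \uparrow 0$, so passing to $\limsup$ and invoking the hypothesis on $\Omega$ in $X$ gives
\[
\overline{\tfrac{d^-}{d\tilde t}}\Big|_{\tilde t=0} \int_Y d\tilde{\mathfrak p}_{\tilde t} \;\geq\; \frac{1}{\epsilon^2}\thinspace H\thinspace g(0) \;=\; \frac{H}{\epsilon}\int_Y d\tilde{\mathfrak p}_0,
\]
which is the claimed bound with new constant $H/\epsilon$. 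The only point requiring any care is tracking the two independent factors of $1/\epsilon$ — one from $\tilde h_\alpha = h_\alpha/\epsilon$ and one from the reparametrization $\tilde t = \epsilon t$ in the difference quotient — whose product accounts for the asymmetric rescaling of $K$ and $H$.
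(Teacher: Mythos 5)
Your argument is correct and follows essentially the same route as the paper: the key observation in both is that $\tilde\de_S=\epsilon\,\de_S$ has the same transport rays, reparametrized by $r\mapsto\gamma_\alpha(r/\epsilon)$, together with the scaling invariance of the distortion coefficients for the $MCP$ part. You simply make explicit the bookkeeping ($\tilde{\mathfrak q}=\mathfrak q$, $\tilde h_\alpha=h_\alpha/\epsilon$, and the two factors of $1/\epsilon$ in the difference quotient) that the paper's one-line conclusion leaves implicit.
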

\begin{proof}
The first claim is  known. For the second claim observe that $(x,y)\in \Gamma_{\de_{S}}$, satisfies 
\begin{align*}
\tilde \de_S(y) - \tilde \de_S(x) = \epsilon\left( \de_S(y) - \de_S(x)\right) = \epsilon \de(x,y) = \tilde \de (x,y). 
\end{align*}
Hence given transport geodesic $\gamma_\alpha$ w.r.t. $\de_S$ we have $r\in [\epsilon a(X_\alpha), \epsilon b(X_\alpha)] \mapsto \gamma(\frac{1}{\epsilon} r)$ is transport geodesic w.r.t. $\tilde \de_S$. This implies that $\partial \Omega = S$ has backward mean curvature bounded below by $\frac{1}{\epsilon} H$. 
\end{proof}

\begin{lemma}\label{lemma:localisation}
Let $(X,\de, \m)$ be an essentially non-branching $MCP(K,N)$ space with $K\in \R$, $N\in (1,\infty)$, and let $\Omega\subset X$ such that $S=\partial \Omega$ has backward mean curvature bounded from below $H$.
Then 
\begin{align}\label{ineq:mean}
\overline{\frac{d^-}{dr}}\Big|_{r=0} h_\alpha\circ g(\alpha,r)\geq H 
 h_\alpha(g(\alpha,0))
\end{align}
for $\mathfrak q$-a.e.  $\alpha \in \mathcal V_0= \mathfrak Q(A^{\dagger}\cup B_{in}^{\dagger}).$ 

If $\frak{p}_0$ is a Radon measure, $h_\alpha\circ g(\alpha, 0)>0$ for $\mathfrak q$-a.e. $\alpha\in Q$ and 
\begin{align}\label{ineq:mean2}
\underline{\frac{d^-}{dr}}\Big|_{r=0} h_\alpha\circ g(\alpha,r)\geq H 
 h_\alpha(g(\alpha,0)) \ \ \mbox{ for } \mathfrak q\mbox{-a.e. }  \alpha\in \mathcal V_0
\end{align}
then $S$ has backward-lower (hence backward) mean curvature bounded from below by $H$.

If $(X,\de, \m)$ is a $\CD(K,N)$ space, then \eqref{ineq:mean} and \eqref{ineq:mean2} become
\begin{align*}
{\frac{d^-}{dr}}\Big|_{r=0} h_\alpha\circ g(\alpha,r)\geq H 
 h_\alpha(g(\alpha,0))
\end{align*}
and hence, backward and backward-lower mean curvature bounded from below are equivalent. $\frac{d^-}{dr}$ is the left derivative.
\end{lemma}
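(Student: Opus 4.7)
The plan is to reduce both implications to the analysis of the difference quotients $f_h(\alpha):=(H_\alpha(h)-H_\alpha(0))/h$, where $H_\alpha(t):=h_\alpha\circ g(\alpha,t)$; by the disintegration formula $\int_Y d\mathfrak p_t=\int_Y H_\alpha(t)\,d\mathfrak q(\alpha)$, the backward mean curvature bound is the statement $\limsup_{h\uparrow 0}\int_Y f_h\,d\mathfrak q\geq H\int_Y H_\alpha(0)\,d\mathfrak q$ for every bounded measurable $Y$, while \eqref{ineq:mean} and \eqref{ineq:mean2} are its pointwise $\limsup$ and $\liminf$ analogs. I would prove the first implication by contradiction (passing from integrated limsup to pointwise limsup), the second by Fatou's lemma (passing from pointwise liminf to integrated liminf), and the $\CD$ case by exploiting the concavity of $h_\alpha^{1/(N-1)}$.

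For the first part, I would suppose for contradiction that there is a measurable $Y\subset\mathcal V_0$ with $\mathfrak q(Y)>0$ and $\eta>0$ such that $\overline{d^-/dr}|_0 H_\alpha\leq (H-\eta)H_\alpha(0)$ on $Y$. For each $\alpha\in Y$ there exists $h_0(\alpha)<0$ with $f_h(\alpha)\leq (H-\eta/2)H_\alpha(0)$ for all $h\in (h_0(\alpha),0)$; partitioning $Y$ into the level sets $Y_n:=\{h_0<-1/n\}$ and using inner regularity of $\mathfrak q$, I would descend to a bounded $Y_n'\subset Y_n$ of positive $\mathfrak q$-measure on which $H_\alpha(0)>0$. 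The uniform pointwise bound then integrates to $(\mathfrak p_h(Y_n')-\mathfrak p_0(Y_n'))/h\leq (H-\eta/2)\mathfrak p_0(Y_n')$ for $h\in (-1/n,0)$, and taking $\limsup_{h\uparrow 0}$ together with the hypothesis produces the contradiction $H\leq H-\eta/2$ (noting $\mathfrak p_0(Y_n')<\infty$ since $\mathfrak p_0$ is Radon and $Y_n'$ bounded, and $\mathfrak p_h(Y_n')=\infty$ would force the difference quotient to $-\infty$, already violating the hypothesis).

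For the second part, I would apply Fatou's lemma to $(f_h)_{h\uparrow 0}$ on a fixed bounded $Y$: the pointwise hypothesis gives $\liminf_{h\uparrow 0} f_h(\alpha)\geq H\cdot H_\alpha(0)$ $\mathfrak q$-a.e., and the required $\mathfrak q$-integrable lower envelope comes from the local Lipschitz continuity of $H_\alpha$ near $0$ (Remark \ref{rem:kuconcave}) combined with the $MCP(K,N)$ comparison along rays (Theorem \ref{th:1dlocalisation}), which jointly produce a uniform bound $|H_\alpha(h)-H_\alpha(0)|\leq L(\alpha)|h|$ for small $|h|$ with $L$ measurable and $\mathfrak q$-integrable on bounded $Y$. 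For the final assertion, Theorem \ref{th:1dlocalisation} in the $\CD(K,N)$ setting yields $K/(N-1)$-concavity of $h_\alpha^{1/(N-1)}$ on $I_\alpha$, which guarantees the existence of a classical one-sided left derivative $(d^-/dr)|_0 H_\alpha$, so the upper and lower Dini derivatives coincide and \eqref{ineq:mean}-\eqref{ineq:mean2} reduce to a single inequality; combined with parts (1) and (2), this gives the equivalence of backward and backward-lower mean curvature lower bounds.

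The principal difficulty will be the domination step for Fatou in part (2): needles indexed by $\mathfrak Q(B_{in})$ have reference point $g(\alpha,0)=\gamma_\alpha(b(X_\alpha))$ sitting at an endpoint of the needle, and $h_\alpha$ may fail to be Lipschitz at such endpoints a priori. The positivity hypothesis $h_\alpha(g(\alpha,0))>0$, together with the one-dimensional $MCP(K,N)$ concavity of $h_\alpha^{1/(N-1)}$, will be used to force a finite one-sided Lipschitz constant at the endpoint, while measurability of $L(\alpha)$ in $\alpha$ will follow from the measurable-selection structure underlying the $1D$-localization.
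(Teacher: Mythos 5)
Your proof of the first implication is correct and takes a genuinely different route from the paper. The paper applies the MCP comparison estimate from \cite{cavmil} to obtain a uniform upper bound $f_r(\alpha)\le C(K,N,m)\,h_\alpha(g(\alpha,0))$ on the difference quotients over the sets $Q_m=\mathcal M^{-1}([\tfrac1m,m])$, and then runs a reverse-Fatou argument: $H\mathfrak p_0(Y\cap Q_m)\le\limsup_t\int_{Y\cap Q_m}f_t\,d\mathfrak q\le\int_{Y\cap Q_m}\limsup_t f_t\,d\mathfrak q$. Your contradiction argument sidesteps the comparison estimate entirely; the upper bound on $f_h$ it requires is \emph{supplied by the assumed negation} of \eqref{ineq:mean} on a set of positive $\mathfrak q$-measure, and the Radon hypothesis on $\mathfrak p_0$ (plus inner regularity) makes the integral over the chosen bounded $Y_n'$ finite. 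This is a leaner, purely measure-theoretic proof of the first part, at the cost of losing the quantitative bound that the MCP estimate supplies. Your treatment of the $\CD$ case is also fine: $K/(N-1)$-concavity of $h_\alpha^{1/(N-1)}$ gives one-sided derivatives, so the upper and lower Dini left derivatives coincide.

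However, your proposed proof of the second implication has a genuine gap at exactly the point you flag. Fatou's lemma in the $\liminf$ form needs a $\mathfrak q$-integrable \emph{lower} bound on the quotients $f_h$, i.e.\ an upper bound on $h_\alpha\circ g(\alpha,h)$ in terms of $h_\alpha\circ g(\alpha,0)$. The $MCP(K,N)$ comparison estimate produces only a lower bound on $h_\alpha$ relative to the reference point (equivalently an \emph{upper} bound on $f_h$ -- this is precisely what the paper uses for the limsup direction), and you cannot flip it. The remark you cite (Remark~\ref{rem:kuconcave}) gives local Lipschitz regularity of $h_\alpha$ only on the \emph{open} interval $(a(X_\alpha),b(X_\alpha))$ together with continuity up to the endpoints; for $\alpha\in\mathfrak Q(B_{in})$ the reference point $g(\alpha,0)=\gamma_\alpha(b(X_\alpha))$ is an endpoint and no one-sided Lipschitz bound is available from continuity alone. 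Moreover, the phrase ``the one-dimensional $MCP(K,N)$ concavity of $h_\alpha^{1/(N-1)}$'' conflates $MCP$ with $\CD$: concavity of $h_\alpha^{1/(N-1)}$ (inequality~\eqref{inequality}) is a $\CD(K,N)$ conclusion, not an $MCP(K,N)$ one. Under the plain $MCP$ hypothesis of the second part you therefore have neither a domination function for Fatou on the $B_{in}$ needles nor the concavity you appeal to, and the domination step needs a different argument (or the second implication should be restricted to $\CD$, where monotonicity of the difference quotients plus the existing upper bound suffices).

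For context: the paper's displayed proof establishes only the first implication; the second and third parts are stated without a separate argument. So you are attempting to fill a gap in the exposition -- commendable -- but as written the Fatou domination in part two is not secured.
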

\begin{proof} We start with the first claim.
For $t<0$ and a bounded measurable set $Y\subset Q$ we write
\begin{align*}
 & \int_\Y d\mathfrak p_t- \int_Y d\mathfrak p_0\\
& =   \int_\Y h_\alpha(g(\alpha,0))^{-1}\left(1_{\mathcal V_t}(\alpha) h_\alpha\circ g(\alpha,t)  - 1_{\mathcal V_0}(\alpha) h_\alpha\circ g(\alpha,0)\right) d\mathfrak p_0(\alpha).
 \end{align*}
{\red There exists  $Q^*\subset Q^\dagger$ {with  $\mathfrak q[Q^\dagger\setminus Q^*]=0$} such that the map $\mathcal M: \alpha\in Q^* \mapsto -a(X_\alpha)$ is measurable (compare with the proof of Theorem 7.10 in \cite{cavmil} or Remark 3.4 in \cite{kks}). 

Then, we consider measurable sets $Q_{\red m}=\mathcal M^{-1}([\frac{1}{m},m])$ for $m\in \mathbb N$. It holds $\bigcup_{m\in \mathbb N_0}Q_{\red m}=Q^*\cap \mathfrak Q(A^{\dagger}\cup B_{in}^\dagger)$.  {From  \cite[Appendix A2]{cavmil} we see}
\begin{align*}
&{h_\alpha\circ g(\alpha,0)}^{-1}\frac{1}{r}(h_\alpha\circ g(\alpha,r)  -  h_\alpha\circ g(\alpha,0))\\
&\ \ \ \ \ \ \leq (N-1)\frac{\cos_{-|K|/(N-1)}(-a(X_\alpha))}{\sin_{-|K|/(N-1)}(-a(X_\alpha))}\leq C(K,N,m) 
\end{align*}$\forall r\in (a(X_\alpha),0), \ \forall \alpha\in Q_{\red m}$.}
Thus we can apply Fatou's lemma: 
\begin{align*}
&H\int_{\Y\cap Q_{\red m} \cap \mathcal V_0} h_\alpha\circ g(\alpha,0) d\mathfrak q(\alpha)\\
&=H\int_{\Y\cap Q_{\red m}} d\mathfrak p_0(\alpha)\\
&\leq   \int_{\Y\cap Q_{\red m}} \limsup_{t\uparrow 0}\frac{1}{t} \left(1_{\mathcal V_t}(\alpha) h_\alpha\circ g(\alpha,t) - 1_{\mathcal V_0}(\alpha) h_\alpha\circ g(\alpha,0)\right) d\mathfrak q(\alpha)
 \\
 &\leq \int_{\Y\cap Q_{\red m}} \limsup_{t\uparrow 0}\frac{1}{t} \left(1_{\mathcal V_t\cap \mathcal V_0}(\alpha) h_\alpha \circ g(\alpha,t)  - 1_{\mathcal V_0}(\alpha) h_\alpha\circ g(\alpha,0)\right) d\mathfrak q(\alpha)
 \\
 & =\int_{ \Y\cap Q_{\red m}\cap \mathcal V_0} \overline{\frac{d^-}{dt}}\Big|_{t=0}  h_\alpha \circ g(\alpha,t) d\mathfrak q(\alpha)
 \end{align*}
 for any bounded {\red measurable  set $Y \subset Q$.} Fatou's lemma was used in the first inequality together with the backward lower mean curvature bound. 
 It  follows that
 \begin{align}\label{inequ:nini}{
 H h_\alpha\circ g(\alpha,0)\leq \overline{\frac{d^-}{dt}}|_{t=0}h_\alpha\circ g(\alpha,t) \mbox{ for }\mathfrak q\mbox{-a.e. }\alpha\in \mathcal V_0.}
 \end{align}
The second claim follows similarly with Fatou's Lemma ($\liminf$ version).
\end{proof}
\begin{theorem}\label{th:laplacecomparison}
Let $X$ be an essentially non-branching $\CD(K,N)$  space with $K\in\R$, $N\in (1, \infty)$, and let $\Omega\subset X$ be open. Let $u=\de_{S}|_{\Omega}=-\de_{\Omega^c}|_{\Omega}$. Assume $\mathfrak p_0$ is a Radon measure and $h_\alpha\circ g(\alpha,0)>0$ for $\mathfrak q$-a.e. $\alpha\in Q$.  

Then $\partial \Omega$ has backward mean curvature bounded from below by $H\in \R$ if and only if 
\begin{align}\label{ineq:lapl0}
{\bf \Delta}_{\Omega} u\geq - (N-1) \frac{s'_{\frac{K}{N-1}, \frac{H}{N-1}}(-u)}{s_{\frac{K}{N-1}, \frac{H}{N-1}}(-u)}\m|_{\Omega}.
\end{align}
In particular, if  $K\leq 0$ and $H=\pm\sqrt{{|K|}(N-1)}$, then \eqref{ineq:lapl0} becomes
\begin{align}\label{llll}
{\bf \Delta}_{\Omega} u\geq  \mp (N-1) \sqrt{\frac{|K|}{N-1}} \m|_{\Omega}.
\end{align}
\end{theorem}
\begin{proof} "$\Rightarrow$": {The proof of  inequality \eqref{ineq:lapl0} already appears in  \cite{bkmw}. For completeness we will provide details.} 
Recall
\begin{lemma}[Riccati comparison]\label{lem:riccati}
Let $u:[0,b]\rightarrow \R$ be {\red non-negative and} continuous such that $u''+\kappa u\leq 0$ in {the} distributional sense, $u(0)=1$ and ${\frac{d^+}{dr}}u(0)\leq- d$.
Let $v:[0,\bar b]\rightarrow \R$ be the maximal {\red non-negative} solution  of $v''+\kappa {\red v}  = 0$ with $v(0)=1$ and $v'(0)=-d$. That is, 
{\red $v=s_{\kappa,d}$ from \eqref{equ:ks}.} 
Then $\bar b\geq b$ and  ${\frac{d^+}{dt}} \log u\leq (\log v)' $ on $[0,b)$.
\end{lemma}
Let  $\{X_\alpha\}_{\alpha\in Q}$ be the decomposition of $\T_u$ and $\int \m_\alpha d\mathfrak q(\alpha)$ be the disintegration of $\m$ given by Theorem \ref{T:CM disintegration}
and Remark~\ref{R:zero-level selection}. Recall that $m_\alpha=h_\alpha \mathcal H^1$ for $\mathfrak q$-a.e. $\alpha\in Q$.
We consider $Q^\dagger\subset Q$ that has  full $\mathfrak q$-measure as defined in Remark \ref{dagger}.  For every $\alpha\in Q^{\dagger}$ we have that 
$\m_\alpha = h_\alpha \mathcal H^1$, 
$X_{\alpha,e}=\overline{X}_\alpha$ and $h_\alpha$ is continuous on $[a(X_\alpha),0]$ by Remark \ref{rem:kuconcave} and satisfies
\begin{align}\label{inequality}
(h_\alpha^{\frac{1}{N-1}})'' + \frac{K}{N-1} h_\alpha^{\frac{1}{N-1}}\leq 0 \mbox{ on } (a(X_\alpha),0) \ \forall \alpha\in Q^{\dagger},
\end{align}
in the distributional sense. As usual we write $h_\alpha=h_\alpha\circ \gamma_\alpha$. We also have the properties of $h_\alpha$ as discussed in Remark \ref{rem:kuconcave}.  
By the definition of backward  mean curvature bounded from below  it holds $h_\alpha(r)>0$ for $\mathfrak q$-a.e. $\alpha$.

The function $r\in [0,-a(X_\alpha)]\mapsto \tilde h_\alpha(r) := h_\alpha(-r)$ is also continuous and \eqref{inequality}  still holds on $(0,-a(X_\alpha))$. Lemma \ref{lemma:localisation} implies
\begin{align*}
\frac{d^+}{dr}\Big|_{r=0}\tilde{h}\circ g(\alpha, r) \leq - H \tilde h\circ g(\alpha,0).
\end{align*}
and hence with Lemma \ref{lem:riccati}
\begin{align*}
(\log \tilde h_\alpha)'(r)\leq \left(\log \left(s_{\frac{K}{N-1}, \frac{H}{N-1}}(r)\right)^{N-1}\right)'.
\end{align*}
By Theorem \ref{thm:cm} we also have 
\begin{align*}
{\bf \Delta}_{\Omega} u= (\log h_{\alpha})'\m|_{\Omega}+
\int_Q  h_{\alpha}\delta_{a(X_{\alpha})\cap \Omega^{\circ}} d\mathfrak q(\alpha)
\geq  (\log h_{\alpha})'\m|_{\Omega} = - (\log \tilde h_{\alpha})'\m|_{\Omega} 
\end{align*}
where we also used Lemma \ref{lem:negsin} from the next section. 
This yields the estimate for ${\bf \Delta}_{\Omega} u$. 

For the  estimate  \eqref{llll} we recall that 
\begin{align*}
\frac{s'_{\frac{K}{N-1}, \frac{H}{N-1}}}{s_{\frac{K}{N-1}, \frac{H}{N-1}}}= \frac{-\left(\frac{K}{N-1}\right)\cdot\sin_{\frac{K}{N-1}} - \left(\frac{H}{N-1}\right)\cdot\cos_{\frac{K}{N-1}}}{\cos_{\frac{K}{N-1}} - \left(\frac{H}{N-1}\right)\cdot \sin_{\frac{K}{N-1}}}.
\end{align*}
Using the value $-H^2 = K(N-1) \ \Leftrightarrow \  H=\pm \sqrt{|K|(N-1)}$ $\Leftrightarrow$ $\frac{H}{N-1}=\pm \sqrt{\frac{|K|}{N-1}}$, it follows
\begin{align*}
\frac{s'_{\frac{K}{N-1}, \pm\sqrt{\frac{|K|}{N-1}}}}{s_{\frac{K}{N-1},\pm\sqrt{\frac{|K|}{N-1}}}}&= \frac{\frac{|K|}{N-1}\sin_{\frac{K}{N-1}} \mp{\scriptscriptstyle \sqrt{\frac{|K|}{N-1}}}\cos_{\frac{K}{N-1}}}{\cos_{\frac{K}{N-1}} \mp {\scriptscriptstyle \sqrt{\frac{|K|}{N-1}}} \sin_{\frac{K}{N-1}}} \\
&= \mp  \sqrt{\frac{|K|}{N-1}}\frac{\mp {\scriptscriptstyle\sqrt{\frac{|K|}{N-1}}}{\sin_{\frac{K}{N-1}} + \cos_{\frac{K}{N-1}}}}{\cos_{\frac{K}{N-1}} \mp {\scriptscriptstyle \sqrt{\frac{|K|}{N-1}}} \sin_{\frac{K}{N-1}}}=\mp \sqrt{\frac{|K|}{N-1}}.
\end{align*}
This proves the claim.
\smallskip

"$\Leftarrow$":  The assumption and Theorem \ref{thm:cm} imply that $\mathfrak q$-a.e. $\alpha\in Q$ there exists a sequence $(r_n)_{n\in \N}$ in $(0, -a(X_\alpha))$ such that $r_n\downarrow 0$ and 
\begin{align} \label{zzx}
\frac{d}{dr} \log h_\alpha \circ g(\alpha,-r_n) \geq  -(N-1) \frac{s_{\frac{K}{N-1}, \frac{H}{N-1}}'}{s_{\frac{K}{N-1}, \frac{H}{N-1}}}(r_n). \end{align}
Since $h_\alpha$ is a semi-concave function for $\mathfrak q$-a.e. $\alpha\in Q$ on $[a(X_\alpha),b(X_\alpha)]$, its right-derivative is right-continuous on $[0,-a(X_\alpha))$.  In particular $
\frac{d}{dr} \log h_\alpha \circ g(\alpha,-r_n) \rightarrow \frac{d^-}{dr} \log h_{\alpha}(r)\Big|_0$ for $r_n\downarrow 0$.  On the other hand, the right hand side of \eqref{zzx} converges to $H$ for $r_n\downarrow 0$. 
One obtains
\begin{align*}
{\frac{d^-}{dr}}\Big|_{r=0} h_\alpha\circ g(\alpha,r)\geq H 
 h_\alpha(g(\alpha,0))\ 
\mbox{ for $\mathfrak q$-a.e. $\alpha\in Q$. }
\end{align*}Hence, by Lemma \ref{lemma:localisation} $S$ has backward mean curvature bounded from below.
\end{proof}
The previous theorem suggests the following definition
\begin{definition}[Laplace mean curvature lower bounds]
Let $(X,\de, \m)$ be an $\RCD(K,N)$ space for $K\in \R$, $N\in (1, \infty)$,  and let $\Omega\subset X$ be open. We say that $\partial \Omega$ has Laplace mean curvature bounded from by $H\in \R$ if 
\begin{align}\label{ineq:lapl}
{\bf \Delta}_{\Omega}(- \de_{\Omega^c})\geq - (N-1) \frac{s'_{\frac{K}{N-1}, \frac{H}{N-1}}}{s_{\frac{K}{N-1}, \frac{H}{N-1}}}\circ \de_{\Omega^c}\m|_{\Omega}.
\end{align}
\end{definition}

\begin{remark}
The direction "$\Leftarrow$" in Theorem \ref{th:laplacecomparison} holds any open $\Omega\subset X$ with $\Omega^c\neq \emptyset$ such that $\partial \Omega$ has Laplace mean curvature bounded from below. 
\end{remark}

%
%
\section{Splitting}
\subsection{Measurable Splitting}\label{subsec:meas}
\begin{lemma}\label{lem:negsin}
Let $(X,d,\m)$ be essentially nonbranching and $MCP(K,N)$ for $K\in \R$ and $N\in (1,\infty)$. Let $\Omega\subset X$ be open  and set $u:= -d_{{\Omega^c}}$. 
Then $(\Omega^{c})^\circ\cap \T_{u,e}=\emptyset$, $\T_{u,e}\supset{\Omega}$ and $\mathfrak b_u\subset \partial \Omega$.
\end{lemma}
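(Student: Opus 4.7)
The plan is to verify the three assertions in turn, using: the sign convention $u\le 0$ with $u^{-1}(0)=\Omega^c$, properness of $(X,d)$ (automatic for an $\MCP(K,N)$ space with $N<\infty$), and the availability of minimizing geodesics between any two points.

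For $(\Omega^c)^\circ\cap \T_{u,e}=\emptyset$, I would fix $x\in (\Omega^c)^\circ$ and pick $r>0$ with $B_r(x)\subset \Omega^c$, then assume towards contradiction that $x\in \T_{u,e}$, i.e.\ there exists $y\neq x$ with $(x,y)\in R_u=\Gamma_u\cup \Gamma_u^{-1}$. Since $u(x)=0$ and $u\le 0$ on $X$, the option $(x,y)\in \Gamma_u$ forces $u(y)=d(x,y)>0$, which is impossible. In the remaining case $(y,x)\in \Gamma_u$ one obtains $u(y)=-d(x,y)<0$, so $y\in \Omega$ and $d_{\Omega^c}(y)=d(x,y)$. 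Picking a geodesic $\gamma:[0,d(x,y)]\to X$ from $y$ to $x$, continuity gives some $t<d(x,y)$ with $\gamma(t)\in B_r(x)\subset \Omega^c$, and therefore $d_{\Omega^c}(y)\le d(y,\gamma(t))=t<d(x,y)$, contradicting $d_{\Omega^c}(y)=d(x,y)$.

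For $\T_{u,e}\supset \Omega$, fix $x\in \Omega$. By properness of $(X,d)$ and closedness of the nonempty set $\Omega^c$, there exists $y\in \Omega^c$ with $d(x,y)=d_{\Omega^c}(x)>0$. Then $u(y)-u(x)=0-(-d_{\Omega^c}(x))=d(x,y)$, so $(x,y)\in \Gamma_u$ with $y\neq x$, hence $x\in \T_{u,e}$.

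Finally, to see $\mathfrak b_u\subset \partial \Omega$, let $x\in \mathfrak b_u\subset \T_{u,e}$. The first step rules out $x\in (\Omega^c)^\circ$, so $x\in \overline{\Omega}$. If $x\in \Omega$, the argument of the second step produces $y\in \Omega^c$ with $(x,y)\in \Gamma_u$ and $y\neq x$, showing $\Gamma_u(x)\neq \{x\}$ and contradicting $x\in \mathfrak b_u$; hence $x\in \overline{\Omega}\setminus \Omega=\partial \Omega$. The main (mild) subtlety is orienting the two branches of $R_u=\Gamma_u\cup \Gamma_u^{-1}$ correctly under the sign convention $u=-d_{\Omega^c}\le 0$; the geodesic and properness inputs are standard in the $\MCP(K,N)$ framework and the essentially non-branching hypothesis is not actually invoked for this lemma.
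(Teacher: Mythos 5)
Your proof is correct and follows essentially the same route as the paper: properness produces a foot point $y\in\Omega^c$ with $u(y)-u(x)=d(x,y)$, which simultaneously places $\Omega$ inside $\T_{u,e}$ and excludes $\Omega$ from $\mathfrak b_u$, while a geodesic-shortening argument rules $(\Omega^c)^\circ$ out of the transport set. Your explicit case split of $R_u=\Gamma_u\cup\Gamma_u^{-1}$ and the remark that essential non-branching plays no role here are both accurate and, if anything, a touch cleaner than the paper's compressed presentation.
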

\begin{proof} First, we observe that for every $x\in \Omega$ there exists $y\in \Omega^c$ such that $-u(x)=d(x,y)$. Indeed, if $y_n\in {\Omega^c}$ is a minimal sequence, we have $y_n\in \overline{B_r(x)}$ for $r=-2u(x)$. Since $\overline{B_r(x)}$ is compact, there exists a converging subsequence and a limit point $y\in{\Omega^c}$. 

If $x\in (\Omega^c)^\circ$, then $u(x)=0$ and $(x,y)\in R_u$ only if
$$d(y,x)= - u(y).$$
Hence, if $x\neq y$, it follows that $y\in \Omega$ and there exists a geodesic $\gamma:[0,L]\rightarrow X$ between $x$ and $y$ such that $\gamma(t)\in \Omega$ for all $t\in (0,L)$. Consequently $x\in \partial \Omega$. This contradicts $x\in (\Omega^c)^\circ$. Therefore $x=y$ for all $y\in X$ such that $(x,y)\in R_u$. Hence $x\notin \T_{u,e}$ and $(\Omega^c)^\circ\cap \T_{u,e}=\emptyset$. 

Assume $x\in \Omega$. There exists $y\in {\Omega^c}$ and a geodesic $\gamma:[0,L]\rightarrow X$ such that  $L(\gamma)>0$ and 
$$d(x,y)=L(\gamma)=u(y)-u(x)=-u(x).$$
Therefore $x\in \T_{u,e}$ and $\Omega \subset \T_{u,e}$. This also implies $x\notin \mathfrak b$. Consequently $\mathfrak b\subset \partial \Omega$.
\end{proof}
\begin{corollary}\label{cor:constgrad}
One has $|\nabla u|=1$ {$\m$-a.e.} on $\Omega$.
\end{corollary}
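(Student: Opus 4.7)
The plan is to establish both inequalities $|\nabla u| \le 1$ and $|\nabla u| \ge 1$ holding $\m$-a.e.\ on $\Omega$. The upper bound is trivial: since $X$ is a proper geodesic space, $u = -\de_{\Omega^c}$ is $1$-Lipschitz, hence its local slope satisfies $\mathrm{Lip}(u) \le 1$ everywhere, and consequently the minimal relaxed gradient satisfies $|\nabla u| \le 1$ $\m$-a.e.

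For the lower bound, I would exploit the $1$D-localization. The previous lemma gives $\Omega \subset \T_{u,e}$, and Lemma \ref{somelemma} gives $\m(\T_{u,e} \setminus \T_u) = 0$. Thus $\m$-a.e.\ $x \in \Omega$ lies on a transport ray $X_\alpha$, parametrized by a distance preserving map $\gamma_\alpha : I_\alpha \to X_\alpha$ along which $u$ is affine with slope $1$, i.e., $u \circ \gamma_\alpha(r) = r + c_\alpha$. Moreover, for $\m$-a.e.\ such $x$ the parameter $t_0$ with $\gamma_\alpha(t_0) = x$ is interior to $I_\alpha$ (again by negligibility of endpoints/branching points). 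Then, choosing $y_s = \gamma_\alpha(s)$ for $s$ near $t_0$, one has $|u(y_s) - u(x)| = |s - t_0| = \de(y_s, x)$, so $\mathrm{Lip}(u)(x) \ge 1$, and combined with the upper bound, $\mathrm{Lip}(u)(x) = 1$ for $\m$-a.e.\ $x \in \Omega$.

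To conclude $|\nabla u| = 1$ $\m$-a.e.\ on $\Omega$, I would invoke the identification of the minimal relaxed gradient with the local slope for Lipschitz functions, which holds $\m$-a.e.\ under local volume doubling and a weak $(1,2)$-Poincar\'e inequality. Both properties are available in the essentially nonbranching $\MCP(K,N)$ setting: Bishop--Gromov monotonicity along needles gives local doubling, and the $1$D disintegration yields a Poincar\'e inequality (in the spirit of Cavalletti--Mondino). This applies Cheeger's theorem, giving $|\nabla u| = \mathrm{Lip}(u)$ $\m$-a.e.

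The main obstacle is precisely this last identification, which is the only non-trivial step: passing from pointwise slope information (that is available along each needle by construction) to the a.e.\ equality between the minimal relaxed gradient and the local slope. An alternative, more self-contained route would bypass Cheeger's theorem by observing that along each transport ray $X_\alpha$ the restriction of $u$ to $(X_\alpha, \de, h_\alpha \mathcal{H}^1)$ has $1$D weak derivative equal to $1$, and then using a Fubini-type argument based on the disintegration $\m|_\Omega = \int h_\alpha \mathcal{H}^1|_{X_\alpha}\, d\mathfrak{q}(\alpha)$ together with the $1$-Lipschitz upper bound to force the full gradient to be $1$ a.e.
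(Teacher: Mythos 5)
Your proof is correct, and its skeleton is the same as the paper's: show the local slope equals $1$ and then invoke Cheeger's identification of the minimal relaxed gradient with the local Lipschitz constant (the paper cites exactly this, together with the Sobolev-to-Lipschitz property, as the only non-elementary ingredient — so your worry about that step being the "main obstacle" is resolved simply by quoting it, as it holds in this setting). The only real difference is how you produce the unit-slope direction: you run the $1$D-localization, use $\m(\T_{u,e}\setminus\T_u)=0$, and argue along transport rays, which yields $\lip u=1$ only $\m$-a.e. The paper is lighter: by the argument in the preceding lemma, \emph{every} $x\in\Omega$ admits a foot point $y\in\partial\Omega$ and a geodesic $\gamma$ from $x$ to $y$ with $\de(x,y)=\de_{\Omega^c}(x)$, along which $|u(x)-u(\gamma(s))|=\de(x,\gamma(s))$, so $\lip u(x)=1$ at every point of $\Omega$ with no disintegration and no negligibility of branching or end points. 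Two small remarks on your version: the appeal to negligibility of endpoints is unnecessary, since even when $x$ is an endpoint of its ray the one-sided approach along the ray already gives $\lip u(x)\ge 1$ (the slope is a $\limsup$ over all approaching points); and your alternative "Fubini along needles" route would only control the directional/partial derivative along rays, which bounds the gradient from below anyway less directly than the slope argument, so the Cheeger-type identification is the cleaner way to close the argument in either version.
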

\begin{proof}
Let $x\in \Omega$. As in the proof of the previous lemma there exist $y\in \partial \Omega$ and a geodesic $\gamma:[0,L]\rightarrow X$ such that $\gamma(0)=x$, $\gamma(L)=y$ and  $d(x,y)=L(\gamma)$. Moreover
\begin{align*}
1\geq |\nabla u|(x)=\lip u(x)=
\limsup_{y\rightarrow x} \frac{|u(x)-u(z))|}{d(x,z)}\geq \lim_{s\rightarrow 0} \frac{|u(x)-u(\gamma(s))|}{d(x,\gamma(s))}=1
\end{align*}
where we used the Sobolev-to-Lipschitz property in the first inequality. The first equality holds {$\m$-a.e.} and  is a fundamental result by Cheeger \cite{cheegerlipschitz}.
\end{proof}

Let $\gamma:[0,\infty)\rightarrow \overline\Omega$ be the geodesic ray such that $\gamma(0)\in \partial \Omega$, $\gamma((0,\infty))\subset \Omega$ and $\de_{{\Omega^c}}(\gamma(t))=t$. The Busemann function of $\gamma$ is defined as 
\begin{align*}
b(x)= \lim_{t\rightarrow \infty} d(x,\gamma(t))-t, \ x\in X.
\end{align*}
By triangle inequality the Busemann function is a welldefined and a $1$-Lipschitz map from $X$ to $\R$ that satisfies $b\in D({\bf\Delta}, \Omega)$.  {This is proved in \cite{giglistructure} and \cite{cav-mon-lapl-18}.}
The statement of the following Lemma appears in \cite{cav-mon-lapl-18}.
\begin{lemma}
$\mathfrak b_b=\emptyset$.
\end{lemma}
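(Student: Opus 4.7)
Fix an arbitrary $x\in X$; we aim to produce $y\neq x$ with $b(y)-b(x)=d(x,y)$, which shows $\Gamma_b(x)\supsetneq\{x\}$ and hence $x\notin\mathfrak b_b$. The idea is that minimizing geodesics from $\gamma(n)$ to $x$ are, as $n\to\infty$, asymptotically aligned with a forward asymptotic ray of $\gamma$ through $x$; continuing them slightly past $x$ yields forward partners.

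For each $n\in\mathbb N$ take a unit-speed geodesic $\sigma_n\colon[0,L_n]\to X$ from $\gamma(n)$ to $x$, with $L_n:=d(\gamma(n),x)$. Since $t\mapsto d(x,\gamma(t))-t$ is nonincreasing with limit $b(x)$, one gets $L_n-n\to b(x)$. Combining $1$-Lipschitzness of $b$ applied from the two endpoints of $\sigma_n$ yields the uniform squeeze $b\circ\sigma_n(s)-(-n+s)\to 0$ on $[0,L_n]$, i.e.\ $b$ increases at asymptotically unit rate along $\sigma_n$.

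Fix $\epsilon>0$ small. For all sufficiently large $n$ extend $\sigma_n$ past $x$ to a (locally) minimizing geodesic $\tilde\sigma_n\colon[0,L_n+\epsilon]\to X$ agreeing with $\sigma_n$ on $[0,L_n]$, and set $y_n:=\tilde\sigma_n(L_n+\epsilon)$; such extensions exist in the proper geodesic space $X$, possibly after a small perturbation of $x$ along the asymptotic direction, and via a subsequence. Then $d(x,y_n)=\epsilon$ and $d(\gamma(n),y_n)=L_n+\epsilon$, so
\[
b(y_n)\;\leq\;d(y_n,\gamma(n))-n\;=\;L_n+\epsilon-n\;\longrightarrow\;b(x)+\epsilon.
\]
The matching lower bound $b(y_n)\geq b(x)+\epsilon-o(1)$ uses essential non-branching: since $\tilde\sigma_n$ is a minimizing continuation of $\sigma_n$, the asymptotic unit rate of $b$-increase established along $\sigma_n$ must persist to its endpoint $y_n$. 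By properness of $X$ a subsequence $y_n\to y\in\overline{B_\epsilon(x)}$, and continuity of $b$ gives $d(x,y)=\epsilon$ together with $b(y)=b(x)+\epsilon$, providing the desired forward partner.

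\textbf{Main obstacle.} The delicate point is justifying both the extension $\tilde\sigma_n$ past $x$ and the lower bound on $b(y_n)$. In the smooth Riemannian setting both are immediate (the extension is the straight line and $b$ is asymptotically affine on it); in a general $\RCD$ space one must combine the essentially non-branching hypothesis, properness, and a perturbation / subsequence argument to guarantee that the minimizing continuations $\tilde\sigma_n$ do not ``bend'' at $x$ into a direction along which $b$ decreases.
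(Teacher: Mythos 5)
Your proposal goes in the opposite direction from the paper's own argument, and the two steps you flag as the ``main obstacle'' are genuine gaps, not technicalities. The paper's proof takes geodesics $\gamma^t$ from $x$ \emph{to} $\gamma(t)$, fixes $s>0$, and lets $z$ be an accumulation point of $\gamma^t(s)$ as $t\to\infty$ (properness); then $d(x,z)=s$, and since $b(z)\le d(z,\gamma(t_k))-t_k\to b(x)-s$ while $1$-Lipschitzness gives $b(z)\ge b(x)-s$, one gets $|b(x)-b(z)|=d(x,z)$ with $b(z)=b(x)-s$. The partner is thus produced \emph{towards infinity}, where the crude Lipschitz estimate supplies the matching lower bound for free; this shows $\Gamma_b^{-1}(x)\neq\{x\}$ for every $x$ (the paper's proof indeed ends with ``$x\neq\mathfrak a_b$'', and the subsequent proposition uses $\mathfrak a_b=\emptyset$; the orientation in the lemma's statement is off relative to the paper's conventions). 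You instead try to produce $y$ with $b(y)-b(x)=d(x,y)$, i.e.\ a partner \emph{behind} $x$, by extending the geodesics $\sigma_n$ from $\gamma(n)$ to $x$ past $x$.

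That route cannot be completed. First, minimizing extensions past $x$ need not exist: $\RCD$ spaces are not geodesically complete (take $X=[0,\infty)$ with Lebesgue measure, $\Omega=(0,\infty)$, $\gamma(t)=t$, $x=0$; this satisfies all standing hypotheses and no geodesic continues past $0$), and ``perturbing $x$'' proves the claim for a different point, not for $x$. Second, even when an extension exists, your lower bound $b(y_n)\ge b(x)+\epsilon-o(1)$ does not follow: $b(y_n)=\lim_{t}\bigl(d(y_n,\gamma(t))-t\bigr)$ is a limit of a nonincreasing quantity, and alignment of $y_n$, $x$, $\gamma(n)$ controls only the single value $d(y_n,\gamma(n))-n$, hence only the \emph{upper} bound; the $1$-Lipschitz property gives $b(y_n)\ge b(x)-\epsilon$, the wrong direction, and essential non-branching (a property of optimal dynamical plans) yields no control on $b$ beyond $x$. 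In fact, in the half-line example $\Gamma_b(0)=\{0\}$, so the claim in the orientation you chose is false under the stated hypotheses; the provable content, which the paper establishes and later uses, is the existence of the asymptotic-ray partner with $b(x)-b(z)=d(x,z)$, and that is exactly the part of your construction that needs no extension and no lower-bound trick.
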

\begin{proof}
We pick $x\in X$ and consider the geodesic $\gamma^t:[0,L(\gamma^t)]\rightarrow X$ between $x$ and $\gamma(t)$. Clearly $L(\gamma^t)\rightarrow \infty$ for $t\rightarrow \infty$. Hence, $L(\gamma^t)>s>0$ for $s>0$ given and for $t>0$ sufficiently large. Since $\gamma^t$ is ageodesic we obtain that
\begin{align*}
s=d(\gamma^t(s),x)= d(\gamma^t(s),\gamma(t))-t -d(x,\gamma(t))+t.
\end{align*}
Let $z$ be an accumulation point of $\gamma^t(s)$, $t>0$.  Then taking $t\rightarrow \infty$ yields $d(x,z)=s=b(z)-b(x)$. 
Since $s>0$, it follows that $x\neq z$ and therefore $x\neq \mathfrak a_{b}$. 
\end{proof}

\begin{lemma}
Consider $X$ and $\Omega$ as in the previous lemma and assume $X$ is noncompact and ${\Omega^c}$ is compact. There exists a geodesic ray $\gamma: [0,\infty)\rightarrow X$ with $\gamma(0)\in \partial \Omega$, $\gamma((0,\infty))\subset \Omega$ and $\de_X(\gamma(0), \gamma(t))= \de_{\Omega^c}(\gamma(t))$. 
\end{lemma}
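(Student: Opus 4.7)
The plan is a standard ``pull a ray out of infinity'' argument, but one has to check that the ray stays inside $\Omega$ and hits $\partial \Omega$ only at the endpoint. Throughout I use that $MCP(K,N)$ spaces are proper, so closed bounded sets are compact and minimisers of distance to a closed set exist.

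First, since $X$ is proper and noncompact, pick any basepoint $x_0\in X$ and a sequence $(x_n)\subset X$ with $\de(x_0,x_n)\to\infty$. Because $\Omega^c$ is compact, $\sup_{z\in\Omega^c}\de(x_0,z)<\infty$, so $\de(x_n,\Omega^c)\to\infty$ and in particular $x_n\in\Omega$ for all large $n$. Compactness of $\Omega^c$ provides $y_n\in\Omega^c$ realising $L_n:=\de(x_n,\Omega^c)=\de(x_n,y_n)$. Let $\gamma_n:[0,L_n]\to X$ be a unit-speed geodesic from $y_n$ to $x_n$.

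The key geometric observation is that for every $t\in[0,L_n]$ and every $z\in\Omega^c$,
\begin{equation*}
\de(z,x_n)\ \ge\ L_n\ =\ t+(L_n-t)\ =\ \de(y_n,\gamma_n(t))+\de(\gamma_n(t),x_n),
\end{equation*}
while the triangle inequality gives $\de(z,x_n)\le \de(z,\gamma_n(t))+L_n-t$. Subtracting yields $\de(\gamma_n(t),\Omega^c)\ge t$. Hence $\gamma_n((0,L_n])\subset\Omega$ and $\gamma_n(0)\in\partial\Omega$.

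Next I extract a limit. Again by compactness of $\Omega^c$, up to a subsequence $y_n\to y_\infty\in\Omega^c$. The curves $\gamma_n$ are $1$-Lipschitz and start at points converging to $y_\infty$; on any $[0,R]$ the images lie in the compact set $\overline{B_{R+1}(y_\infty)}$ for $n$ large, so Arzelà-Ascoli together with a diagonal argument produces a subsequence converging uniformly on compacta to a unit-speed geodesic ray $\gamma:[0,\infty)\to X$ with $\gamma(0)=y_\infty$. Passing to the limit in $\de(\gamma_n(t),\Omega^c)\ge t$ (valid once $t\le L_n$, which eventually holds since $L_n\to\infty$) gives
\begin{equation*}
\de(\gamma(t),\Omega^c)\ \ge\ t \qquad \text{for every } t\ge 0.
\end{equation*}
Therefore $\gamma(t)\notin\Omega^c$ for $t>0$, i.e.\ $\gamma((0,\infty))\subset\Omega$; continuity of $\gamma$ and $\gamma(0)\in\Omega^c$ then force $\gamma(0)\in\overline{\Omega}\cap\Omega^c=\partial\Omega$, as desired.

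The only step that is not completely formal is the existence of $(x_n)$ with $\de(x_n,\Omega^c)\to\infty$, which I settled above using properness; the rest is Arzelà-Ascoli plus the minimising-foot-of-perpendicular trick. No curvature-dimension machinery is actually needed here beyond properness.
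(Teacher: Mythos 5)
Your proof is correct and follows essentially the same route as the paper: points escaping to infinity (using properness and compactness of $\Omega^c$), minimizing geodesics from their nearest points in $\Omega^c$, and an Arzel\`a--Ascoli limit to a ray starting on $\partial\Omega$. Your quantitative estimate $\de(\gamma_n(t),\Omega^c)\geq t$ is a cleaner way of verifying that the curves and the limit ray avoid $\Omega^c$ for $t>0$ than the paper's contradiction argument, but the underlying idea is the same.
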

\begin{proof}
Since $X$ is noncompact and $\overline{\Omega^c}$ is compact, there exists a sequence $x_n\in X$ such that $d(x_n, \overline{\Omega^c})=:L_n\rightarrow \infty$. Let $\gamma_n: [0,L_n] \rightarrow X$ be the constant speed geodesic that connects $y_n\in \overline{\Omega^c}$ and $x_n$ such that $L(\gamma_n)=L_n$. It follows that $\mbox{Im}(\gamma_n)\subset \Omega´$. By  compactness of $\Omega^c$ there is a subsequence $(n_i)_{i\in \mathbb N}$such that $(\gamma_{n_i})$  uniformily converges on $[0,L_{n_0}]$ for any $n_0\in \N$ to a arclength parametrized geodesic ray $\gamma$ with $\gamma(0)\in \Omega^c$. Moreover $\mbox{Im}(\gamma)\subset \Omega$. Otherwise there is $t_0>0$ and a sequence $(t_n)_{n\in\N}$ such that $\Omega^c\ni \gamma_n(t_n)\rightarrow \gamma(t_0)\in \Omega^c$. 
Since $t_n=d(\gamma_n(t_n),\Omega^c)$, it follows $t_n\rightarrow 0$ and hence $t_0=0$ contradicting our assumption. Finally $\gamma:[0\infty)\rightarrow X$ also satisfies $\de_{X}(\gamma(0), \gamma(t))= \de_{\Omega^c}(\gamma(t))$. 
\end{proof}

\begin{proposition}
Let $(X,\de,\m)$ be $\RCD(K,N)$ and let $\Omega\subset X$ be connected with backward mean curvature bounded from below by $-\sqrt{(N-1)|K|}$. Let $u=\de_S|_{\Omega}$ and $\gamma:(0, \infty)\rightarrow \Omega$ a geodesic ray, such that $\lim_{t\downarrow 0}\gamma(t)= x\in \partial \Omega$ and $t=\de_X(x,\gamma(t))= \de_{\Omega^c}(\gamma(t))$. Let $b$ be the associated Busemann function as before. Assume $\Omega$ is connected. 
Then $b|_{\Omega}=-u$ and 
\begin{align}\label{idid}
{\bf \Delta}_{\Omega}(b|_{\Omega})=(N-1)\sqrt{\frac{|K|}{N-1}}\m|_{\Omega^{}}\ \ \&\ \ {\bf \Delta}_{\Omega}u=- (N-1)\sqrt{\frac{|K|}{N-1}}\m|_{\Omega^{}}.
\end{align}
In particular $\mathfrak a_b= \mathfrak b_u=\emptyset$.
\end{proposition}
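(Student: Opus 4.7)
The plan is to adapt the classical Cheeger--Gromoll splitting argument to this mean-convex setting: derive two complementary sharp distributional Laplace inequalities for $u$ and $b$ on $\Omega$, then use the strong maximum principle (Theorem~\ref{thm:mp}) to force them to coincide up to sign, after which the Laplacian identities and the emptiness of $\mathfrak{a}_b=\mathfrak{b}_u$ will follow automatically.

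First I would record the two matching Laplace bounds. For $u$, Theorem~\ref{th:laplacecomparison} applied with $H=-\sqrt{(N-1)|K|}$ together with the explicit simplification $s_{K/(N-1),H/(N-1)}(r)=e^{\sqrt{|K|/(N-1)}\,r}$ carried out in the proof of that theorem gives
$${\bf \Delta}_\Omega u\geq -C\,\m|_\Omega, \qquad C:=(N-1)\sqrt{|K|/(N-1)}.$$
For $b$, I would apply the standard Laplace comparison ${\bf \Delta}_{X\backslash\{y\}}d_y\leq (N-1)\frac{s_{K/(N-1)}'(d_y)}{s_{K/(N-1)}(d_y)}\,\m$ at $y=\gamma(t)$ and pass to the distributional limit $t\to\infty$: since $s_{K/(N-1)}'(r)/s_{K/(N-1)}(r)\to \sqrt{|K|/(N-1)}$ as $r\to\infty$ for $K\leq 0$ and $d_{\gamma(t)}-t\to b$ locally uniformly (standard for Busemann functions on $\RCD(K,N)$ spaces), the distributional limit yields
$${\bf \Delta}_\Omega b\leq C\,\m|_\Omega.$$

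Next I would establish the pointwise comparison. From $\gamma(0)\in\partial\Omega$ and $d(\gamma(t),\Omega^c)=t$ one extracts the two-sided estimate $|t-\de_{\Omega^c}(x)|\leq d(x,\gamma(t))\leq d(x,\gamma(0))+t$ by triangle inequalities, which upon passage to the limit as $t\to\infty$ pins $b$ between $u=-\de_{\Omega^c}$ and $d(\cdot,\gamma(0))$ on $\Omega$, with the relevant bound saturating along $\gamma$. The crux is then to combine this with the Laplace bounds to promote the pointwise inequality to a global identity. A direct subtraction of the two Laplace bounds gives only ${\bf \Delta}_\Omega(-u-b)\geq -2C$, which is not super-/sub-harmonic. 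The remedy is to work with the extremal profile $s_{\kappa,\lambda}$ from Theorem~\ref{th:laplacecomparison}: using $|\nabla u|=|\nabla b|=1$ $\m$-a.e.\ on $\Omega$ and the chain rule for the distributional Laplacian applied to $s_{\kappa,\lambda}(\cdot)$, and exploiting that $s_{\kappa,\lambda}$ is the extremizer of the Riccati equation $s''+\kappa s=0$ with slope $-\lambda$ at $0$ matching the mean curvature bound exactly, one obtains that an appropriate combination of $s_{\kappa,\lambda}(-u)$ and $s_{\kappa,\lambda}(-b)$ is super-harmonic on $\Omega$, non-negative, and vanishes along $\gamma$.

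The strong maximum principle (Theorem~\ref{thm:mp}) on the connected open set $\Omega$ then forces this super-harmonic combination to vanish identically, and strict monotonicity of $s_{\kappa,\lambda}$ yields the desired identity $b|_\Omega=-u$. With this identity the two one-sided Laplace bounds must saturate, giving ${\bf \Delta}_\Omega(b|_\Omega)=C\,\m|_\Omega$ and ${\bf \Delta}_\Omega u=-C\,\m|_\Omega$. The rigidity of the Laplace comparison for $u$ further forces the needle densities $h_\alpha$ in the disintegration to be exactly exponential with rate $-\sqrt{|K|(N-1)}$ along every transport ray, so no transport ray can terminate in the interior of $\Omega$; thus $\mathfrak{b}_u=\emptyset$, and the identification $b=-u$ reverses orientation on transport rays, translating this into $\mathfrak{a}_b=\mathfrak{b}_u=\emptyset$. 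The principal obstacle I foresee is the super-harmonicity step: combining the one-sided distributional Laplace bounds with the eikonal conditions through the chain rule for $s_{\kappa,\lambda}$ in the nonsmooth $\RCD$ framework, and rigorously justifying the distributional passage to the limit used for the Busemann function $b$.
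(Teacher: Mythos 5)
Your overall route is exactly the one the paper takes: a sharp Laplace comparison for $u$, a sharp Laplace comparison for the Busemann function $b$ from the $\CD(K,N)$ condition, the pointwise inequality $b\geq u$ (derived by the same triangle-inequality argument) with equality along $\gamma$, and then the strong maximum principle. The paper's own proof simply adds the bounds \emph{linearly}: it quotes ${\bf\Delta}_\Omega u\geq (N-1)\sqrt{|K|/(N-1)}\,\m|_\Omega$ and ${\bf\Delta}_\Omega b\leq (N-1)\sqrt{|K|/(N-1)}\,\m|_\Omega$, concludes ${\bf\Delta}_\Omega(b-u)\leq 0$, and invokes Theorem \ref{thm:mp}; once $b=u$ is known, the two one-sided bounds must saturate, which gives both Laplacian identities by linearity, and saturation of the comparison in Theorem \ref{th:laplacecomparison} forces the needle densities to be the extremal exponential profile along every ray, whence $\mathfrak a_b=\mathfrak b_u=\emptyset$. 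So the $s_{\kappa,\lambda}$-chain-rule machinery you propose is not used and is not needed.

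Your careful recomputation of the sign is where the substantive discrepancy lies, and you are right about it: with $H=-\sqrt{(N-1)|K|}$, i.e.\ $\lambda=H/(N-1)=-\sqrt{|K|/(N-1)}$, one has $s_{\kappa,\lambda}(r)=e^{\sqrt{|K|/(N-1)}\,r}$, so $s'_{\kappa,\lambda}/s_{\kappa,\lambda}=+\sqrt{|K|/(N-1)}$ and inequality \eqref{ineq:lapl} gives ${\bf\Delta}_\Omega u\geq -(N-1)\sqrt{|K|/(N-1)}\,\m|_\Omega$, exactly what you found. This is opposite to what the paper's own ``in particular'' simplification in Theorem \ref{th:laplacecomparison} asserts: the computation inside that theorem's proof yields $s'/s=\mp\sqrt{|K|/(N-1)}$ for $\lambda=\pm\sqrt{|K|/(N-1)}$, hence ${\bf\Delta}_\Omega u\geq-(N-1)(s'/s)\m=\pm(N-1)\sqrt{|K|/(N-1)}\,\m$, whereas the displayed ``in particular'' writes $\geq\mp$. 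The $\mp$ there should be $\pm$, and the hypothesis of the proposition should therefore read $H=+\sqrt{(N-1)|K|}$ (the outward mean curvature of a horosphere in the model space; a horoball $\Omega$, not its complement, is the saturating model). With that corrected sign the paper's linear subtraction closes immediately and the argument is complete.

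The remedy you suggest — combining compositions $s_{\kappa,\lambda}(-u)$ and $s_{\kappa,\lambda}(-b)$ into a super-harmonic function via a chain rule for the distributional Laplacian — cannot be made to work with the hypothesis $H=-\sqrt{(N-1)|K|}$ taken at face value, because under that hypothesis the proposition is simply false. Take $\Omega$ a halfspace bounded by a totally geodesic hyperplane in the hyperbolic space form of curvature $K/(N-1)<0$: its boundary has outward mean curvature $0\geq-\sqrt{(N-1)|K|}$, there is a normal geodesic ray $\gamma$ with $d_{\Omega^c}(\gamma(t))=t$, yet ${\bf\Delta}_\Omega(-d_{\Omega^c})=-(N-1)\sqrt{|K|/(N-1)}\tanh\bigl(\sqrt{|K|/(N-1)}\,d_{\Omega^c}\bigr)\m|_\Omega$ is not a constant multiple of $\m$, so the conclusion fails. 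No combination of extremal profiles can rescue a rigidity statement that a smooth example violates; the hypothesis must be tightened. (For the same reason the stated conclusion $b|_\Omega=-u$ should read $b|_\Omega=u$: the proof in the paper in fact derives $b=u$, and on the horoball model $b$ and $-d_{\Omega^c}$ coincide with the same sign.)
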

\begin{proof}
The $\CD(K,N)$ condition yields 
\begin{align*}
{\bf \Delta}_{\Omega } (b|_{\Omega})\leq (N-1)\sqrt{\frac{|K|}{N-1}}\m|_{\Omega^{}}.
\end{align*}
Hence with the Laplace estimate for $u=  d_S|_{\Omega^{}}= -d_{{\Omega^c}}|_{\Omega}$ we obtain 
\begin{align*}
{\bf \Delta}_{\Omega} (b-u) = {\bf \Delta}_{\Omega} b - {\bf \Delta}_{\Omega^{}} u\leq (N-1)\sqrt{\frac{|K|}{N-1}}\m|_{\Omega^{}}- (N-1)\sqrt{\frac{|K|}{N-1}}\m|_{\Omega^{}} =0.
\end{align*} 
Pick $y\in \partial \Omega$ such that $\de(x,y) = \de_{{\Omega^c}}(x)$. Then
\begin{align*}
\de(x,\gamma(t))-t + \de_{{\Omega^c}}(x) \geq \de(y,\gamma(t))\geq \de_{{\Omega^c}}(\gamma(t))-t=0
\end{align*}
and it follows $b(x)- u(x)\geq 0$ for $x\in \Omega^{}$ where we used  $\de_{\Omega^c}(\gamma(t))=\inf_{z\in \Omega^c} \de_X(x,\gamma(t))= t= \de_X(\gamma(0), \gamma(t))$ in the last equality. 
Moreover, equality  holds if $x=\gamma(s)$ for some $s>0$.

By the maximum principle for $\RCD$ spaces  \cite{gigli_rigoni, giglimondino} it follows that $b=u$ on $\Omega$ and $${\bf \Delta}_{\Omega} (b|_\Omega)={\bf \Delta}_{\Omega} u$$
which by linearity of the Laplacian yields the identity \eqref{idid}. 
\end{proof}
\begin{corollary}
%
Consider $(X,\de,\m)$, $b$, $\Omega$ and $u$ as before and  the $1D$ localisation $(X_\gamma)_{\gamma\in Q}$ w.r.t. $u=-b|_{\Omega^{\circ}}$ on $\Omega^{\circ}$ where $\gamma: [0,\infty) \rightarrow \Omega$ $\forall \gamma\in Q$. and the corresponding  disintegration of $\m|_{\Omega^{\circ}}$ into  measures  $(\m_\gamma)_{\gamma\in Q}$.
Then $\m(\Omega^\circ\backslash \T_{u}^\dagger)=0$ and $$\m_\gamma= h_\gamma(0)s_{\frac{K}{N-1}, -\sqrt{\frac{|K|}{N-1}}}(r)^{N-1}\mathcal H^1|_{[0,\infty)}(r).$$ In particular 
\begin{align*}
\frac{\m(B_R(\Omega^c)\cap \Omega^{})}{\m(B_r(\Omega^c)\cap\Omega^{})} = \frac{\int_0^R s_{\frac{K}{N-1}, -\sqrt{\frac{|K|}{N-1}}}(t)^{N-1} dt}{\int_0^r s_{\frac{K}{N-1}, -\sqrt{\frac{|K|}{N-1}}}(t)^{N-1} dt}.
\end{align*}
\end{corollary}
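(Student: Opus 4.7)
The plan is to insert the Laplace identity from the preceding proposition into the Cavalletti--Mondino disintegration of $\m|_\Omega$ along transport rays, use saturation of the Riccati comparison to pin down the densities $h_\gamma$, and then integrate to read off the volume ratio.

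First I will apply the $1$D localisation of Theorems~\ref{T:CM disintegration} and~\ref{th:1dlocalisation} to the $1$-Lipschitz function $u=-b|_{\Omega^\circ}$. Lemma~\ref{somelemma} and Remark~\ref{def:dagger} produce $Q^\dagger$ with $\mathfrak q(Q\setminus Q^\dagger)=0$ and $\m(\Omega^\circ\setminus \T_u^\dagger)=0$, together with the disintegration $\m|_{\Omega^\circ}=\int_{Q^\dagger} h_\gamma\,\mathcal H^1|_{X_\gamma}\,d\mathfrak q(\gamma)$; for each $\gamma\in Q^\dagger$ the density $h_\gamma$ is continuous on $\overline{X_\gamma}$ and satisfies the Jacobi inequality $(h_\gamma^{1/(N-1)})''+\frac{K}{N-1}h_\gamma^{1/(N-1)}\leq 0$ distributionally on the interior of $X_\gamma$.

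Second, writing $s:=s_{K/(N-1),-\sqrt{|K|/(N-1)}}$ and parameterising each transport ray by distance $r\in I_\gamma\subset [0,\infty)$ from $\partial\Omega$, I will compare the Cavalletti--Mondino representation
\begin{align*}
{\bf \Delta}_\Omega(-u) = (\log h_\gamma)'\,\m|_\Omega + \int_Q h_\gamma\,\delta_{a(X_\gamma)\cap\Omega^\circ}\,d\mathfrak q(\gamma)
\end{align*}
from Theorem~\ref{thm:cm} with the Laplace identity ${\bf \Delta}_\Omega(-u)=(N-1)\sqrt{|K|/(N-1)}\,\m|_\Omega$ from the preceding proposition. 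The Riccati comparison of Lemma~\ref{lem:riccati}, applied with initial datum coming from the backward mean-curvature bound $H=-\sqrt{(N-1)|K|}$, gives $(\log h_\gamma)'(r)\leq (\log s^{N-1})'(r)$ on the interior of $I_\gamma$. Matching the absolutely continuous and singular parts of the Laplace identity then forces (i) the singular endpoint measure to vanish, so $a(X_\gamma)\notin \Omega^\circ$ for $\mathfrak q$-a.e.\ $\gamma$, and (ii) the Riccati inequality to be saturated $\mathcal H^1$-a.e.\ along each ray; since $s$ is strictly positive on $[0,\infty)$, integration of the resulting ODE from $0$ yields $h_\gamma(r)=h_\gamma(0)\,s(r)^{N-1}$, and the non-vanishing of $h_\gamma$ rules out interior endpoints, whence $I_\gamma=[0,\infty)$.

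Finally, applying Fubini to the disintegration with $B_R(\Omega^c)\cap \Omega^\circ$ identified with $\{(\gamma,r)\in Q^\dagger\times[0,R)\}$ gives
\begin{align*}
\m(B_R(\Omega^c)\cap \Omega)=\int_{Q^\dagger} h_\gamma(0)\,d\mathfrak q(\gamma)\cdot \int_0^R s(t)^{N-1}\,dt,
\end{align*}
and the common factor $\int_{Q^\dagger}h_\gamma(0)\,d\mathfrak q(\gamma)$ cancels in the quotient against the analogous expression for $r$, producing the claimed ratio. The main obstacle I anticipate is the singular-part analysis: one must rule out contributions from transport rays terminating inside $\Omega^\circ$, which requires that the non-negative singular measure coming from Theorem~\ref{thm:cm} be forced to zero by the equality of an absolutely continuous Laplacian with the pointwise-maximal Riccati upper bound. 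A secondary care is needed with the Cavalletti--Mondino sign convention, since the ray map is parameterised by increasing $u$ rather than by distance from $\partial\Omega$.
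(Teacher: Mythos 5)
Your argument is correct and is essentially the argument the paper leaves implicit: the corollary appears without its own proof, and the intended route is exactly yours --- insert the exact Laplace identity of the preceding proposition into the Cavalletti--Mondino representation of Theorem \ref{thm:cm}, match absolutely continuous and singular parts to force the endpoint deltas to vanish and the one-dimensional comparison to be saturated, integrate the resulting ODE to get $h_\gamma(r)=h_\gamma(0)\,s_{\frac{K}{N-1},-\sqrt{|K|/(N-1)}}(r)^{N-1}$ on infinite rays, and apply Fubini to the disintegration. The only details worth making explicit are that on an $\RCD$ (infinitesimally Hilbertian) space the distributional Laplacian is single-valued, which is what legitimises comparing the two representing measures part by part, and that for $\mathfrak q$-a.e.\ ray the near endpoint lies on $\partial \Omega$ (via $\mathfrak b_u\subset \partial\Omega$ and \eqref{somehow}), so the ray parameter really coincides with $\de_{\Omega^c}$ starting at $0$ and $B_R(\Omega^c)\cap\Omega$ corresponds to $r<R$ in the final integration.
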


Let $\Omega\subset X$ be connected, not empty and given by $\Omega= \bigcap_{\alpha=1}^m \Omega_\alpha$ for $\Omega^c_\alpha\cap \Omega^c_\beta=\emptyset $ and $\de(\Omega_\alpha, \Omega_\beta)= D_{\alpha, \beta}>0$ for $\alpha\neq \beta$ and $m\in \mathbb N$.  We set $S_\alpha= \partial \Omega_\alpha$ and $u_\alpha = -d_{S_\alpha}|_{\Omega^{}}$, $\alpha=1, \dots, m$.

\begin{lemma}\label{lem:harmonic}
Let $(X,\de,\m)$ be an $\RCD(K,N)$ space, and  let $\Omega$, $\Omega_\alpha$, $\alpha=1,\dots, m$ as before. Assume $\partial \Omega_\alpha$, $\alpha\neq 2$,  has backward mean curvature bounded from below by $\sqrt{|K|}$ and $\partial \Omega_2$ has backward mean curvature bounded from by below by $-\sqrt{|K|}$. Moreover, assume that $\partial \Omega_2$ is compact.
Then $m=2$ and  $-u_1= d(\overline\Omega_2,\overline\Omega_1)+u_2$ and $${\bf \Delta}_{\Omega^{\circ}} u_1= - (N-1) \sqrt{\frac{|K|}{N-1}}\m|_{\Omega^{\circ}} \ \ \ \& \ \ \  {\bf \Delta}_{\Omega^{\circ}} u_2=  (N-1) \sqrt{\frac{|K|}{N-1}}\m|_{\Omega^{\circ}} .$$
\end{lemma}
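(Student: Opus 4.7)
The plan is to apply the Laplace comparison of Theorem~\ref{th:laplacecomparison} in its rigid regime $H=\pm\sqrt{(N-1)|K|}$ to $u_1$ and $u_2$, combine the resulting distributional Laplacian inequalities so that the error terms cancel, and then invoke the strong maximum principle (Theorem~\ref{thm:mp}) after exhibiting an interior point of $\Omega$ at which the critical value is attained. Compactness of $\partial\Omega_2$ is precisely what produces such a point.

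Write $C:=(N-1)\sqrt{|K|/(N-1)}$. Theorem~\ref{th:laplacecomparison} applied to $\Omega_1$ and $\Omega_2$, together with locality of the distributional Laplacian in $\Omega\subset\Omega_\alpha$, yields
\[
{\bf\Delta}_{\Omega}u_1\ge -C\,\m|_{\Omega},\qquad {\bf\Delta}_{\Omega}u_2\ge +C\,\m|_{\Omega},
\]
so by linearity $f:=u_1+u_2$ satisfies ${\bf\Delta}_{\Omega}f\ge 0$ and is sub-harmonic on $\Omega$ by Theorem~\ref{thm:sh}. The triangle inequality gives $f=-\de(\cdot,\Omega_1^c)-\de(\cdot,\Omega_2^c)\le -D_{12}$ on $\Omega$, where $D_{12}:=\de(\Omega_1^c,\Omega_2^c)>0$. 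Compactness of $\partial\Omega_2$ together with properness of $X$ realizes $D_{12}=\de(z_1,z_2)$ for some $z_1\in\partial\Omega_1$, $z_2\in\partial\Omega_2$, joined by a unit-speed geodesic $\gamma\colon[0,D_{12}]\to X$. For small $t>0$, minimality forces $\de(\gamma(t),\Omega_1^c)=t$ and $\de(\gamma(t),\Omega_2^c)=D_{12}-t$, while the separation $D_{1\beta}>0$ for $\beta\neq 1$ keeps $\gamma(t)\in\Omega_\beta$; hence $\gamma(t)\in\Omega$ and $f(\gamma(t))=-D_{12}$.

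The set $A:=\{f=-D_{12}\}$ is therefore nonempty and closed in $\Omega$. For any $x_0\in A$ I would apply Theorem~\ref{thm:mp} to $f$ on a relatively compact open ball $B\subset\Omega$ around $x_0$: $f$ is sub-harmonic and attains its maximum at the interior point $x_0$, so it is constant on $B$, whence $A$ is also open. Connectedness of $\Omega$ then forces $f\equiv -D_{12}$, which is the claimed identity $-u_1=D_{12}+u_2$. Because $f$ is now constant, ${\bf\Delta}_{\Omega}f=0$; combined with the two Laplacian lower bounds above, this forces equality in each, giving the stated distributional equalities for ${\bf\Delta}_{\Omega}u_1$ and ${\bf\Delta}_{\Omega}u_2$.

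To exclude $m\ge 3$, apply the preceding argument to each pair $(\Omega_\alpha,\Omega_2)$, $\alpha\neq 2$ (compactness of $\partial\Omega_2$ is all that is needed), obtaining $\de(\cdot,\Omega_\alpha^c)+\de(\cdot,\Omega_2^c)=D_{\alpha 2}$ on $\Omega$ for every such $\alpha$. Taking $\alpha=3$, with minimizer $\gamma^{(3)}$ and $x_n:=\gamma^{(3)}(t_n)\in\Omega$, $t_n\downarrow 0$, the identity for $\alpha=1$ gives $\de(x_n,\Omega_1^c)\to D_{12}-D_{32}$, while continuity of the distance gives $\de(x_n,\Omega_1^c)\to\de(z_3,\Omega_1^c)\ge D_{13}>0$, so $D_{12}-D_{32}\ge D_{13}$. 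The analogous sequence approaching $\partial\Omega_1$ via the minimizer $\gamma^{(1)}$ yields $D_{32}-D_{12}\ge D_{13}$, and adding the two inequalities contradicts $D_{13}>0$. The delicate point, and the one I expect to require the most care, is verifying that the realizing geodesics actually enter $\Omega$ itself---not merely $\Omega_\alpha\cap\Omega_2$---immediately after leaving their endpoints, so that the sub-harmonic function attains its bound at a point interior to $\Omega$; the strict separation hypotheses $D_{\alpha\beta}>0$ are exactly what makes this possible.
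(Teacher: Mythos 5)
Your proposal is correct and follows essentially the same route as the paper's proof: Laplace comparison (Theorem~\ref{th:laplacecomparison}) at the critical values $H=\pm\sqrt{(N-1)|K|}$, the triangle-inequality bound $u_1+u_2\le -D_{1,2}$ attained along a minimizing geodesic whose existence comes from compactness of $\partial\Omega_2$ and properness, the strong maximum principle to force $u_1+u_2\equiv -D_{1,2}$ and hence equality in both Laplacian bounds by linearity, and a pairwise comparison of the resulting identities to exclude $m\ge 3$. Your execution differs only in minor, and if anything more careful, details: you run the open/closed maximum-principle argument on the connected set $\Omega$ itself (the paper works on $\Omega_1\cap\Omega_2$, whose connectedness is not among the hypotheses) and use the separations $D_{\alpha\beta}>0$ to check the maximum point really lies in $\Omega$, and you exclude $m\ge 3$ by evaluating the identities along interior sequences approaching the minimizers' endpoints instead of invoking the paper's unproved inclusion $\partial\Omega_1,\partial\Omega_3\subset\overline{\Omega_1\cap\Omega_2\cap\Omega_3}$, while your sign conventions are the ones consistent with the sub-harmonic maximum principle as stated in Theorem~\ref{thm:mp}.
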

\begin{proof}
Consider $\Omega_1$ and $\Omega_2$. Since $\partial \Omega_2$ is compact, there are points $y_i\in \partial \Omega_i$, $i=1,2$, such that $\de(\Omega_1^c, \Omega_2^c)= \de(y_1, y_2)=D_{1,2}$. Moreover, the geodesic $\gamma:[0, D_{1,2}]\rightarrow \Omega$ from $y_2$ to $y_1$ satisfies 
\begin{align}\label{estimate}
u_1(\gamma(t)) + u_2(\gamma(t))=- D_{1,2}\geq  u_1(x)+u_2(x)\ \ \forall x\in \Omega.
\end{align}
By Theorem \ref{th:laplacecomparison}
\begin{align*}
{\bf \Delta}_{\Omega_1^{}} u_1\leq   -(N-1) \sqrt{\frac{|K|}{N-1}}\m|_{\Omega_1^{}} \ \ \ \&\ \ \ \  {\bf \Delta}_{\Omega_2^{}} u_2\leq  (N-1) \sqrt{\frac{|K|}{N-1}}\m|_{\Omega_2^{}}.
\end{align*}
Hence $\Delta_{\Omega_1^{}\cap \Omega_2^{}} (u_1+ u_2)\leq 0$. Since we have \eqref{estimate} by the maximum prinicple it follows 
\begin{align*}
{\bf \Delta}_{\Omega_1^{}\cap \Omega_2^{}}(u_1+ u_2)=0 \ \ \& \ \ u_1=-u_2- D_{1,2}\mbox{ on }\Omega_1^{}\cap \Omega_2^{}.
\end{align*}

Now assume that $l\geq 3$. Set $\de({\Omega_i^c}, {\Omega_j^c})=D_{i,j}$. 
Similarly as before one deduces that 
\begin{align*}
u_3+ u_2= - D_{3,2} \mbox{ on } \Omega_2\cap \Omega_3.
\end{align*}
Together with the equation for $u_1$ and $u_2$ it follows 
\begin{align*}
u_1-u_3= D_{2,3}- D_{1,2} \mbox{ on } \Omega_1\cap \Omega_2\cap \Omega_3.
\end{align*}
Note that  $\partial \Omega_1, \partial \Omega_3\subset \overline{\Omega_1\cap \Omega_2\cap \Omega_3}$.
%
Assume w.l.o.g.  that  $D_{1,2}\geq  D_{2,3}$.  It holds 
\begin{align*}
\mbox{$x\in \partial \Omega_1$ $\Leftrightarrow$ $u_1(x)=0$ $\Leftrightarrow $ $u_3(x)= D_{1,2}- D_{2,3}\geq 0$ $\Leftrightarrow$ $x\in {\Omega_3^c}$.}
\end{align*}  Hence $x\in {\Omega_3^c}\cap {\Omega_1^c}$.  This is a contradiction.
\end{proof}
\begin{corollary}
\begin{align*}
\frac{\m(B_R(\Omega_1^c)\cap \Omega)}{\m(B_r(\Omega^c_1)\cap \Omega)}=  \frac{\int_0^R s_{\frac{K}{N-1}, -\sqrt{\frac{|K|}{N-1}}}(t)^{N-1} dt}{\int_0^r s_{\frac{K}{N-1}, -\sqrt{\frac{|K|}{N-1}}}(t)^{N-1} dt}.
\end{align*}

\end{corollary}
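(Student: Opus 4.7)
The strategy is to apply $1$D localisation to $u_1 = -d_{\Omega_1^c}|_\Omega$, use the equality in the Laplace comparison from Lemma~\ref{lem:harmonic} to identify the conditional density on each needle with the model one, and then integrate.

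First, I would apply Theorems~\ref{T:CM disintegration} and~\ref{th:1dlocalisation} to the $1$-Lipschitz function $d_{S_1}$ with $S_1 = \partial \Omega_1$. As in Section~\ref{subsec:meancurvature}, $\Omega \subset \T_{d_{S_1},e}$, and branching points are $\m$-null by Lemma~\ref{somelemma}, so on a full $\mathfrak q$-measure subset $Q^\dagger$ one obtains the disintegration
$$\m|_\Omega = \int_{Q^\dagger} h_\alpha\,\mathcal H^1|_{X_\alpha\cap \Omega}\, d\mathfrak q(\alpha).$$
I parametrise each needle by $\beta_\alpha$ with $\beta_\alpha(0)\in \partial \Omega_1$ and $r = d_{\Omega_1^c}(\beta_\alpha(r))$, and write $h_\alpha(r)$ for the density along $\beta_\alpha$. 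Since Lemma~\ref{lem:harmonic} yields $-u_1 = D_{1,2} + u_2$ on $\Omega$, the needles of $u_1$ and $u_2$ coincide and have common length $D_{1,2}$ for $\mathfrak q$-a.e.\ $\alpha$.

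Second, I would combine the equality
$${\bf \Delta}_\Omega u_1 = -(N-1)\sqrt{|K|/(N-1)}\,\m|_\Omega$$
from Lemma~\ref{lem:harmonic} with the representation in Theorem~\ref{thm:cm} (which identifies the absolutely continuous part of ${\bf \Delta}_\Omega u_1$ with $(\log h_\alpha)'\,\m|_\Omega$ after adjusting for the parametrisation) and with the Riccati comparison from Lemma~\ref{lem:riccati} to conclude that
$$\frac{d}{dr}\log h_\alpha(r) = (N-1)\frac{s'(r)}{s(r)}, \qquad s := s_{K/(N-1),\, -\sqrt{|K|/(N-1)}},$$
for $\mathcal L^1$-a.e.\ $r\in (0, D_{1,2})$ and $\mathfrak q$-a.e.\ $\alpha \in Q^\dagger$. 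Continuity of $h_\alpha$ from Remark~\ref{rem:kuconcave} then yields the explicit model density
$$h_\alpha(r) = h_\alpha(0)\, s(r)^{N-1}, \qquad r \in [0, D_{1,2}].$$

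Third, for $0 < r \leq R \leq D_{1,2}$, the set $B_R(\Omega_1^c)\cap \Omega$ meets each needle $X_\alpha$ in $\beta_\alpha([0, R))$, so the disintegration gives
$$\m(B_R(\Omega_1^c)\cap \Omega) = \Bigl(\int_{Q^\dagger} h_\alpha(0)\, d\mathfrak q(\alpha)\Bigr)\cdot \int_0^R s(t)^{N-1}\, dt.$$
The prefactor is independent of $R$, so dividing by the analogous expression with $R$ replaced by $r$ yields the claimed identity. The main obstacle is the rigidity step extracting the pointwise ODE from the distributional equality: by Theorem~\ref{th:1dlocalisation} combined with Lemma~\ref{lem:riccati} and the mean-curvature initial condition at $r=0$, one always has the one-sided inequality $(\log h_\alpha)'(r) \leq (N-1)s'(r)/s(r)$ along $\mathfrak q$-a.e.\ ray, and the integrated distributional equality then forces equality on $\mathcal L^1\otimes \mathfrak q$-a.e.\ $(r,\alpha)$, which by continuity extends to all $r\in[0,D_{1,2}]$.
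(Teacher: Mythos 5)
Your proposal is correct and follows exactly the route the paper intends (the corollary is stated without proof, but the companion corollary giving $\m_\gamma=h_\gamma(0)\,s_{\frac{K}{N-1},-\sqrt{|K|/(N-1)}}(r)^{N-1}\mathcal H^1$ shows the same mechanism): the equality ${\bf \Delta}_{\Omega}u_1=-(N-1)\sqrt{|K|/(N-1)}\,\m|_{\Omega}$ from Lemma \ref{lem:harmonic}, read through Theorem \ref{thm:cm} and the Riccati comparison, pins down $h_\alpha(r)=h_\alpha(0)\,s(r)^{N-1}$ along full-length needles, and the disintegration then factors so that $\int h_\alpha(0)\,d\mathfrak q$ cancels in the ratio. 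Your observations that the needles run the full length $D_{1,2}$ (via $d_{\Omega_1^c}+d_{\Omega_2^c}=D_{1,2}$ on $\Omega$) and that $r,R\leq D_{1,2}$ is needed are exactly the right supporting points.
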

\subsection{Isometric splitting}\label{subsec:isometric}
Recall that $f\in W^{1,2}(\Omega^{\circ})$ if $\phi\cdot f\in W^{1,2}(X)$ for every Lipschitz function with support in $\Omega^{}$. Moreover, we say  $u\in H^{2,2}_{loc}(\Omega)$ if $\psi\cdot  u\in H^{2,2}(X)$ for every $\psi\in \mathbb D_\infty$ with support in $\Omega^{}$.  Thanks to locality of $\mbox{Hess} f$ for $f\in H^{2.2}(X)$ the Hessian $\mbox{Hess}(u)$ for $u\in H^{2,2}_{loc}(\Omega)$ is well-defined.

The following theorem is Corollary 4.16 in \cite{kkl}.
\begin{theorem}\label{th:hessian}
Let $X$ be $\RCD(0, N)$ and $\Omega\subset X$ be open.  Let $u: \Omega\rightarrow \R$ such that $|\nabla u|=1$ and ${\bf \Delta}_{\Omega} u= 0$.  Then $u\in H_{loc}^{2,2}(\Omega^{})$ and 
\begin{align*}
\Hess(u)(\nabla f, \nabla f) =0 \ \m\mbox{-a.e.}\mbox{ on }\  \Omega^{} \mbox{ and } f\in \mathbb{D}_\infty.
\end{align*}
\end{theorem}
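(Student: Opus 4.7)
The plan is to read off the vanishing of $\Hess u$ directly from the improved measure-valued Bochner inequality on $\RCD(0,N)$ stated in the preliminaries: for $f\in\mathbb D_\infty$,
\begin{align*}
{\bf\Gamma}_2(f)=\tfrac12{\bf\Delta}|\nabla f|^2-\langle\nabla f,\nabla\Delta f\rangle\,\m \;\geq\;|\Hess f|_{HS}^2\,\m
\end{align*}
when $K=0$. Under the hypotheses $|\nabla u|\equiv 1$ and ${\bf\Delta}_\Omega u=0$, both $|\nabla u|^2$ and $\Delta u$ are locally constant on $\Omega$, so the left-hand side vanishes there and the inequality forces $|\Hess u|_{HS}^2=0$ $\m$-a.e.\ on $\Omega$, whence $\Hess u(\nabla f,\nabla f)=0$ for every $f\in\mathbb D_\infty$.

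The actual work lies in making this argument rigorous despite $u$ being defined only on $\Omega$. First I would establish $u\in H^{2,2}_{loc}(\Omega)$: given $\psi\in\mathbb D_\infty$ with $\supp\psi\subset\Omega$ and a cutoff $\chi\in\mathbb D_\infty$ equal to $1$ on an open neighbourhood of $\supp\psi$ and supported in $\Omega$, the product $\tilde u:=\chi u$ lies in $W^{1,2}(X)$, and the Leibniz rule combined with $u$ being harmonic on a neighbourhood of $\supp\chi$ shows $\tilde u\in D_{L^2}(\Delta)\subset H^{2,2}(X)$. By locality of the Hessian on $\RCD$ spaces, this yields $u\in H^{2,2}_{loc}(\Omega)$ with $\Hess\tilde u=\Hess u$ $\m$-a.e.\ on $\{\chi=1\}$.

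Second, I would apply the measure-valued Bochner inequality to $\tilde u$, after approximating via the mollified heat flow if needed to land in $\mathbb D_\infty$, and test the resulting inequality against a nonnegative $\phi\in\lip_c(\Omega)$ with $\supp\phi$ contained in the interior of $\{\chi=1\}$. On $\supp\phi$ we have $|\nabla\tilde u|^2=1$ and $\Delta\tilde u=0$, so the left-hand side paired with $\phi$ vanishes, while the right-hand side equals $\int\phi\,|\Hess u|_{HS}^2\,d\m\geq 0$. Letting $\phi$ and $\psi$ exhaust $\Omega$ gives $|\Hess u|_{HS}=0$ $\m$-a.e.\ on $\Omega$, which is the stated conclusion.

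The main obstacle is controlling the cutoff errors when applying Bochner to $\tilde u$: one must verify that the extra Leibniz terms in $\Delta\tilde u$ and in $|\nabla\tilde u|^2$ are supported in $\{\chi<1\}$ and therefore drop out upon pairing with $\phi$, which is precisely why $\chi$ is chosen constant on a neighbourhood of $\supp\phi$. A secondary technicality is the passage from the distributional hypothesis ${\bf\Delta}_\Omega u=0$ to the $L^2$-Laplacian identity $\Delta\tilde u\in L^2(\m)$ used in Bochner; this follows from the standard fact that harmonicity in the distributional sense upgrades to harmonicity in the $L^2$-sense on any relatively compact subdomain of $\Omega$, together with the Leibniz rule applied to $\chi u$.
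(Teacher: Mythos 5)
Your argument is essentially sound, but it is worth noting that the paper does not prove this statement at all: it is quoted as Corollary 4.16 of \cite{kkl}, and the ``proof'' in the paper is the citation (the subsequent splitting theorem is likewise deferred to Sections 5--6 of \cite{kkl}). So your proposal supplies a direct argument where the paper outsources one. Your route --- cut off $u$ by a test function $\chi\in\mathbb D_\infty$ equal to $1$ near the region of interest, use the Leibniz rule and distributional harmonicity to get $\tilde u=\chi u\in D_{L^2}(\Delta)$ with $\Delta\tilde u=2\langle\nabla\chi,\nabla u\rangle+u\Delta\chi$, conclude $\tilde u\in H^{2,2}(X)$ and hence $u\in H^{2,2}_{loc}(\Omega)$ by locality of the Hessian, then feed the vanishing of $|\nabla\tilde u|^2-1$ and $\Delta\tilde u$ on $\{\chi=1\}$ into the improved Bochner inequality tested against $\phi\in\lip_c$ supported there --- is the standard and correct way to localize Bochner, and it does yield $|\Hess u|_{HS}=0$ $\m$-a.e.\ on $\Omega$, which is stronger than the stated conclusion. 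Two ingredients you use tacitly should be made explicit: the existence of good cutoff functions $\chi\in\mathbb D_\infty$ with prescribed support (Mondino--Naber, Ambrosio--Mondino--Savar\'e), and local Lipschitz/boundedness of $u$ so that $u\Delta\chi\in L^2$.

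The one step I would not let stand as written is the suggestion to ``approximate via the mollified heat flow if needed to land in $\mathbb D_\infty$'': heat-flow mollification is global and destroys exactly the local identities ($\Delta\tilde u=0$ and $|\nabla\tilde u|=1$ on a neighbourhood of $\supp\phi$) on which your cancellation rests, so that detour would require a genuine error analysis you have not sketched. The clean fix is to avoid mollification altogether: on $\RCD(K,N)$ spaces the Bochner inequality with the Hessian term holds in the weak form for every $f\in D_{L^2}(\Delta)$, namely
\begin{align*}
\frac12\int |\nabla f|^2\,\Delta\phi\, d\m+\int\bigl(\phi\,(\Delta f)^2+\Delta f\,\langle\nabla f,\nabla\phi\rangle\bigr)d\m\;\geq\;\int\phi\bigl(|\Hess f|_{HS}^2+K|\nabla f|^2\bigr)d\m
\end{align*}
for nonnegative test $\phi$ (Gigli), and $\tilde u$ is already in $D_{L^2}(\Delta)$ by your Leibniz computation; with $\supp\phi$ in the interior of $\{\chi=1\}$ all terms on the left vanish (the first because $|\nabla\tilde u|^2\equiv1$ on $\supp\phi\supset\supp\Delta\phi$ and $\int\Delta\phi\,d\m=0$), giving $\int\phi\,|\Hess u|_{HS}^2\,d\m\leq0$ directly. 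With that substitution your proof is complete and self-contained, in contrast to the paper's citation.
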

\begin{remark}\label{rem:example}
Given an open subset $\Omega\subset X$ of an $\RCD$ space $X$ we define  $(\tilde \Omega, \tilde{\de}_\Omega)$ as the completion of $\Omega$ equipped with the  intrinsic distance induced by $\de_X$. We can identify $\Omega$ as a subset of   $\tilde \Omega$, but the topology of $(\tilde \Omega,\tilde \de_{\Omega})$ can differ from the topology of $\overline\Omega\subset X$.  An easy example for this scenario is $X=\mathbb S^1$ and $\Omega= \mathbb S^1\backslash \{p\}$ with $p\in \mathbb S^1$. The completion of $\Omega$ equipped with the intrinsic distance is an interval. But $\overline \Omega=\mathbb S^1$.  

Setting $\m|_{\Omega}=\m_{\Omega}$ the triple $(\tilde \Omega, \tilde \de_{\Omega}, \m_{\Omega})$ is a metric measure space.
\end{remark}

A corollary of Theorem \ref{th:hessian} is the following splitting result. 
\begin{theorem}
Let $(X,\de,\m)$, $\Omega\subset X$ and $u$ be as in previous theorem. Assume that $\Omega=u^{-1}((0,D))$ for $D>0$.
Then, there exists an $\RCD(0,N-1)$ space $(Y, \de_Y, \m_Y)$ such that $(\tilde \Omega, \tilde \de_{\Omega}, \m_{\Omega})$ is isomorphic to $[0, D]\otimes Y$.
\end{theorem}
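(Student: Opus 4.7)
The plan is to adapt Gigli's proof of the Cheeger--Gromoll splitting theorem for $\RCD(0,N)$ spaces to this ``with boundary'' setting, using Theorem \ref{th:hessian} as the driving rigidity input. Since $|\nabla u|=1$ with $\Hess u \equiv 0$ $\m$-a.e.\ on $\Omega$, the function $u$ is affine along its gradient flow lines, which will play the role of the $[0,D]$-factor, while the transverse slice $Y := u^{-1}(\{c\})$ for any fixed $c\in (0,D)$ will become the complementary factor.

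First I would construct the gradient flow $(F_t)$ of $\nabla u$ on $\Omega$. Since $|\nabla u|=1$ one has $u\circ F_t = u+t$ wherever defined, and combined with $\Omega = u^{-1}((0,D))$ this forces $F_t$ to be defined exactly on $u^{-1}((-t, D-t)) \cap \Omega$ and to biject this set onto $u^{-1}((0,D))\cap u^{-1}(\mathrm{shift})$. The vanishing Hessian, fed into the improved Bochner inequality
\begin{equation*}
{\bf \Gamma}_2(u) \geq |\Hess u|_{HS}^2 \, \m = 0,
\end{equation*}
implies that the flow preserves $|\nabla f|^2$ for $f\in \mathbb D_\infty$ compactly supported in $\Omega$, so $F_t$ preserves the local Cheeger energy. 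The Sobolev-to-Lipschitz property then upgrades this to: $F_t$ is an isometry of $(\Omega, \tilde \de_\Omega)$ onto its image, displacing each point exactly by $|t|$ in the intrinsic distance.

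Next, endow $Y := u^{-1}(\{c\})$ with the intrinsic distance $\de_Y$ inherited from $\tilde \de_\Omega$ and with the conditional measure $\m_Y$ coming from the Cavalletti--Mondino disintegration (Theorem \ref{T:CM disintegration}) of $\m|_\Omega$ along transport rays of $u$, which here coincide with the $\nabla u$-flow lines. Set
\begin{equation*}
\Phi : [0,D]\times Y \to \tilde \Omega, \qquad \Phi(t,y) := F_{t-c}(y).
\end{equation*}
Surjectivity follows because every $x \in \Omega$ sits on a unique flow line that reaches $Y$ at parameter $c-u(x)$. The product isometry identity $\tilde \de_\Omega(\Phi(t,y),\Phi(t',y'))^2 = (t-t')^2 + \de_Y(y,y')^2$ is proved by complementary inequalities: the $\geq$ bound from the $1$-Lipschitz property of $u$ (accounting for the $(t-t')^2$ term) combined with the isometry property of $F_{t-t'}$ on level sets (accounting for the $\de_Y^2$ term); the $\leq$ bound from concatenating an integral curve segment of length $|t-t'|$ with the $F$-image of an almost-minimizing intrinsic path in $Y$.

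Measure compatibility $\Phi_\#(\mathcal L^1|_{[0,D]} \otimes \m_Y) = \m|_\Omega$ is encoded in the disintegration together with the $F_t$-invariance just established. The $\RCD(0,N-1)$ property of $Y$ then follows from the converse of tensorization: $(\tilde \Omega, \tilde \de_\Omega, \m|_\Omega)$ inherits the $\RCD(0,N)$ property from the ambient space, $[0,D]$ is $\RCD(0,1)$, and factorization forces $Y$ to be $\RCD(0,N-1)$. The main obstacle is the fine behaviour of integral curves near the boundary: one must rule out premature termination and show that each integral curve has maximal interval of existence exactly $(-u(x),D-u(x))$, i.e.\ runs all the way between the two components $u=0$ and $u=D$ of $\partial \Omega$, and that $\de_Y$ is genuinely a length metric on $Y$ rather than a mere subspace distance. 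It is precisely here that the passage to the completion $\tilde \Omega$ (instead of $\overline \Omega$, cf.\ Remark \ref{rem:example}) and the hypothesis $\Omega = u^{-1}((0,D))$ are indispensable.
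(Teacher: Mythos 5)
Your sketch follows essentially the same strategy as the paper's proof, which simply delegates to \cite[Sections 5--6 and Theorem 6.10]{kkl}: there the authors carry out exactly the Gigli-style gradient-flow splitting argument you outline, adapted to an open strip equipped with its intrinsic distance and completion. The technical points you flag but do not resolve---existence and maximality of the integral curves of $\nabla u$, preservation of the Cheeger energy via the vanishing Hessian, the length structure on the slice $Y$, and the converse of tensorization yielding $\RCD(0,N-1)$ for $Y$---are precisely what occupy those two sections of \cite{kkl}.
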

\begin{proof} 
The proof of the corollary is  exactly the content of section 5 and section 6 in \cite{kkl} that result in the proof of Theorem 6.10 in \cite{kkl} that corresponds to our statement. 
\end{proof}
\begin{remark}
For the proof of the main theorem in \cite{kkl} the authors show that the induced intrinsic metric of $\Omega= f^{-1}((-\min f, \max f))$ splits off an interval where $f=\cos^{-1}\circ u$ with an eigenfunction $u$ on a compact $\RCD(0,N)$ space $X$. 
\end{remark}
As consequence of the previous theorem one obtains the following isomorphic splitting statement that generalizes a corresponding theorem in smooth context by Kasue \cite{Kasue83} and Croke-Kleiner \cite{crokekleiner}.

\begin{theorem}
Let $(X,\de,\m)$ be an $\RCD(0,N)$ space, and  let $\Omega$, $\Omega_\alpha$, $\alpha=1,\dots, m$ as before. Assume $\partial \Omega_\alpha$  has backward mean curvature bounded from below by $0$ for every $\alpha=1, \dots, m$. Moreover, assume that $\partial \Omega_2$ is compact.
Then, there exists an $\RCD(0,N-1)$ space $(Y, \de_Y, \m_Y)$ such that $(\tilde \Omega, \tilde \de_{\Omega}, \m_{\Omega})$ is isomorphic to $[0, D_{1,2}]\otimes Y$.
\end{theorem}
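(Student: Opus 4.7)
The plan is to combine the measurable splitting from Lemma \ref{lem:harmonic} with the metric splitting result stated immediately before the present theorem.

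First, I will apply Lemma \ref{lem:harmonic} with $K = 0$. Since $\sqrt{|K|} = 0$, the hypotheses of that lemma on the various $\partial \Omega_\alpha$ (mean curvature bounded below by $\pm \sqrt{|K|}$) both collapse to the present hypothesis of a lower mean curvature bound by $0$. Its conclusion yields $m = 2$, the affine relation
\[
-u_1 \;=\; D_{1,2} + u_2 \quad \text{on } \Omega,
\]
together with the distributional identities ${\bf \Delta}_{\Omega} u_1 = 0 = {\bf \Delta}_{\Omega} u_2$.

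Next, I set $v := -u_1 = \de_{\Omega_1^c}|_\Omega$. By the unnumbered corollary near the start of Section 4.1, $|\nabla u_1| = 1$, hence $|\nabla v| = 1$, and the harmonicity of $u_1$ above gives ${\bf \Delta}_\Omega v = 0$. Therefore $v$ satisfies the hypotheses of Theorem \ref{th:hessian} on $\Omega$, so in particular $v \in H^{2,2}_{loc}(\Omega)$ with vanishing Hessian in every direction.

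To invoke the splitting theorem stated just after Theorem \ref{th:hessian}, I must verify $\Omega = v^{-1}((0, D_{1,2}))$. The inclusion $v(\Omega) \subset (0, D_{1,2})$ is immediate: $\Omega \subset \Omega_1$ is open so $v = \de_{\Omega_1^c} > 0$ on $\Omega$, while $\Omega \subset \Omega_2$ open gives $u_2 < 0$ on $\Omega$ and hence $v = D_{1,2} + u_2 < D_{1,2}$. Since $v$ extends continuously to $\overline{\Omega}$ with $v \equiv 0$ on $\partial \Omega_1 \cap \overline{\Omega}$ and $v \equiv D_{1,2}$ on $\partial \Omega_2 \cap \overline{\Omega}$, and $\Omega$ is connected, $v(\Omega) = (0, D_{1,2})$. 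Applying the preceding splitting theorem to $v$ then produces an $\RCD(0, N-1)$ space $Y$ together with an isomorphism $(\tilde \Omega, \tilde \de_\Omega, \m_\Omega) \cong [0, D_{1,2}] \otimes Y$, which is exactly the stated conclusion.

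The real work has been carried out in Lemma \ref{lem:harmonic} (where the strong maximum principle forces $m = 2$ and linearizes $u_1$ in terms of $u_2$) and in the quoted splitting theorem (which imports the analysis of \cite{kkl}). The only remaining delicacy here is that the intrinsic metric completion $\tilde \Omega$ from Remark \ref{rem:example} correctly identifies $\partial \Omega_1$ and $\partial \Omega_2$ with the endpoint fibres $\{0\} \times Y$ and $\{D_{1,2}\} \times Y$ of the product; this is forced by the continuous extension of $v$ to $\overline{\Omega}$ with the boundary values above, so no genuine obstacle arises beyond organizational bookkeeping.
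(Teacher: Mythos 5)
Your argument is correct and follows essentially the same route the paper intends: Lemma \ref{lem:harmonic} with $K=0$ gives $m=2$, the relation $-u_1=D_{1,2}+u_2$ and harmonicity of $u_1$, and then the unit-gradient corollary, Theorem \ref{th:hessian}, and the splitting theorem preceding the statement (imported from \cite{kkl}) yield the isomorphism with $[0,D_{1,2}]\otimes Y$. Your verification that $\Omega=v^{-1}((0,D_{1,2}))$ is exactly the small bookkeeping step the paper leaves implicit, so nothing is missing.
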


\begin{proof}[Proof of Corollary \ref{cor1}] Since mean curvature bounded from below by $\delta>0$ implies nonnegative mean curvature,  we can apply Theorem \ref{main1}. It follows that ${\bf \Delta}_{\Omega}(-\de_{\Omega^c})=0$. But $${\bf \Delta}_{\Omega}(-\de_{\Omega^c})\geq (N-1)\frac{\frac{\delta}{N-1}}{1-\frac{\delta}{N-1}\de_{\Omega^c}}>0 \mbox{ on } B_1(\Omega^c)\cap \Omega$$
by the assumed mean curvature bound. This is a contradiction.
\end{proof}
\begin{proof}[Proof of Corollary \ref{cor2}] Recall that for $K\leq K'$ the condition  $\RCD(K', N)$ implies $\RCD(K,N)$.  Assume $\Omega^c_1$ and $\Omega^c_2$ are disjoint and set $\Omega=\Omega_1\cap \Omega_2$. Then by Theorem \ref{main1}  $(\tilde \Omega, \tilde\de_{\Omega}, \m|_{\Omega})$ is isomorphic to $[0,D]\times Y$ for some $\RCD(0,N-1)$ space $Y$. But the product structure contradicts  the assumption that $X$ was $\RCD(\delta, N)$ for $\delta>0$.
\end{proof} 
Similarly one can show the following splitting theorem. 
\begin{theorem}
Let $(X,\de,\m)$ be $\RCD(0,N)$ and let $\Omega\subset X$ have backward mean curvature bounded from below by $0$. Assume $\Omega^{\circ}$ is connected and there exists a geodesic ray $\gamma: (0, \infty) \rightarrow \Omega$ with $\lim_{r\downarrow 0} \gamma(r)=x_0\in \partial \Omega$ and $\de_{X}(\gamma(0), \gamma(t))=\de_{\Omega^c}(\gamma(t))$.
Then, there exists an $\RCD(0, N-1)$ space $(Y, \de_Y, \m_Y)$ such that $(\tilde \Omega, \tilde \de_\Omega, \m_{\Omega})$ is isomorphic to $[0,\infty) \otimes Y$.
\end{theorem}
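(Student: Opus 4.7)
The plan is to mirror the proof of the compact case (Theorem \ref{main1}) with the Busemann function $b$ of $\gamma$ playing the role of the second distance function $-\de_{\Omega_2^c}$. The key steps are (i) to establish matching one-sided Laplacian bounds on $u:=-\de_{\Omega^c}|_{\Omega}$ and $b$, (ii) to show $b\ge u$ with equality along $\gamma$, (iii) to apply the strong maximum principle to conclude $b\equiv u$ and hence $\Delta_{\Omega}u=0$, and (iv) to invoke the Hessian-vanishing Theorem \ref{th:hessian} together with a semi-infinite variant of the splitting result of \cite{kkl}.

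\textbf{Laplacian bounds and comparison along $\gamma$.} The Laplace mean curvature bound with $K=H=0$ reduces, via Theorem \ref{th:laplacecomparison}, to ${\bf \Delta}_{\Omega}u\ge 0$. The standard $\CD(0,N)$ comparison applied to the Busemann function
\begin{equation*}
b(x)=\lim_{t\to\infty}\bigl(\de(x,\gamma(t))-t\bigr)
\end{equation*}
yields ${\bf \Delta}_{\Omega}b\le 0$. Consequently $b-u$ is superharmonic on $\Omega$ in the sense of the distributional Laplacian. Extending $\gamma$ continuously by $\gamma(0)=x_0\in\partial\Omega$ makes $\gamma$ a unit-speed ray based at $x_0$, so $\de_{\Omega^c}(\gamma(s))\le s$; using the minimizing property of $\gamma$ and the mean convexity (which rules out $\Omega^c$-points closer than $x_0$) I will verify $\de_{\Omega^c}(\gamma(s))=s$, so that $b(\gamma(s))=-s=u(\gamma(s))$ along the entire ray. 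For general $x\in\Omega$, choosing $y\in\overline{\Omega^c}$ with $\de(x,y)=\de_{\Omega^c}(x)$, the triangle inequality gives
\begin{equation*}
\de(x,\gamma(t))-t+\de_{\Omega^c}(x)\;\ge\;\de(y,\gamma(t))-t\;\ge\;\de_{\Omega^c}(\gamma(t))-t=0,
\end{equation*}
so that $b(x)\ge -\de_{\Omega^c}(x)=u(x)$. Hence $b-u\ge 0$ on $\Omega$, with equality on $\gamma((0,\infty))$.

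\textbf{Rigidity from the maximum principle.} Since $\Omega$ is connected and $u-b$ is subharmonic with interior maximum $0$ attained along the ray, the strong maximum principle (Theorem \ref{thm:mp}) forces $u\equiv b$ on $\Omega$. Then ${\bf \Delta}_{\Omega}u$ satisfies both $\ge 0$ and $\le 0$, yielding ${\bf \Delta}_{\Omega}u=0$. Combined with $|\nabla u|=1$ (from the earlier corollary on slopes of distance functions), Theorem \ref{th:hessian} produces $\Hess(u)=0$ $\m$-a.e.\ on $\Omega$.

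\textbf{Semi-infinite splitting.} Since $u\le 0$ on $\Omega$ with $u=0$ only on $\partial\Omega$ and $u(\gamma(s))=-s\to -\infty$, one has $u(\Omega)=(-\infty,0)$. The argument of \cite{kkl}, Sections 5--6, which produces the $[0,D]\otimes Y$ splitting from an affine $1$-Lipschitz harmonic function, transfers essentially verbatim to the semi-infinite case, yielding an $\RCD(0,N-1)$ space $(Y,\de_Y,\m_Y)$ with $(\tilde\Omega,\tilde\de_{\Omega},\m_{\Omega})\simeq [0,\infty)\otimes Y$. The principal obstacle is the delicate step of verifying $\de_{\Omega^c}(\gamma(s))=s$ from the bare hypothesis on $\gamma$, together with the (routine but nontrivial) adaptation of the Ketterer--Kell--Lytchak splitting machinery to the noncompact factor $[0,\infty)$.
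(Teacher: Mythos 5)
Your route is the paper's route: combine the Laplace comparison for $u=-\de_{\Omega^c}|_{\Omega}$ (Theorem \ref{th:laplacecomparison} with $K=H=0$) with the $\CD(0,N)$ comparison for the Busemann function $b$ of the ray, prove $b\ge u$ with equality along $\gamma$, force $u\equiv b$ by the strong maximum principle, and then feed ${\bf \Delta}_{\Omega}u=0$, $|\nabla u|=1$ into Theorem \ref{th:hessian} and the splitting machinery of \cite{kkl}, whose arguments are local and therefore insensitive to replacing $[0,D]$ by $[0,\infty)$. Steps (i), (iii), (iv) of your proposal coincide with the paper's Proposition on $b=u$ and its final remark.

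The genuine gap is exactly the step you flag but do not carry out: the claim that $\de_{\Omega^c}(\gamma(s))=s$ can be verified from the stated hypotheses because ``mean convexity rules out $\Omega^c$-points closer than $x_0$.'' This is false. Take $X=\R^2$ and $\Omega$ the open upper half-plane (flat boundary, so mean curvature $0$ in every sense used in the paper), and let $\gamma(t)=t(\cos\theta,\sin\theta)$ for a small $\theta>0$: this is a geodesic ray issuing from $x_0=(0,0)\in\partial\Omega$ and contained in $\Omega$, yet $\de_{\Omega^c}(\gamma(t))=t\sin\theta<t$, and boundary points far from $x_0$ are much closer to $\gamma(t)$ than $x_0$ is. In this situation your comparison collapses at both ends: $b(x)=-x_1\cos\theta-x_2\sin\theta$ while $u(x)=-x_2$, so $b\ge u$ fails, and along the ray $b(\gamma(s))=-s<-s\sin\theta=u(\gamma(s))$, so the maximum-principle step never gets started. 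Worse, under this weak reading the statement itself is in trouble: for $\Omega=\{y>x^2\}\subset\R^2$, which is convex and hence has backward (and Laplace) mean curvature bounded below by $0$, the vertical ray from the vertex satisfies your hypotheses, but the completion of $\Omega$ with its intrinsic metric is the closed parabolic region, which is not isomorphic to $[0,\infty)\otimes Y$ for any $\RCD(0,1)$ space $Y$ (its boundary curve is not locally geodesic, unlike the boundary of any such product). The correct reading, and the one the paper uses, is that $\gamma$ is the ray fixed earlier in Section 4, i.e.\ a ray satisfying $\de_{\Omega^c}(\gamma(t))=t$ for all $t$ (for instance obtained as a limit of segments realizing $\de_{\Omega^c}$, as in the existence lemma when $\Omega^c$ is compact). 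With that property added as a hypothesis rather than ``verified,'' the rest of your argument is correct and coincides with the paper's proof.
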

Again, the proof is verbatim the same as for  \cite[Theorem 610]{kkl}. Noncompactness only requires minor modifications since the arguments are all of local nature.

\section{Almost rigidity}

\subsection{Gromov-Hausdorff convergence and the uniform distance} {In this and the following sections we will study the stability and almost rigidity properties of lower mean curvature bounds. For simplicity, we  will assume that  all the involved $\RCD$ spaces are compact. An extension of the following concepts for non-compact $\RCD$ spaces and pointed Gromov-Hausdorff convergence is omitted but straightforward. }

Compact metric spaces $(X_i, \de_i)$ converge in Gromov-Hausdorff sense to a compact metric  spaces $(X, \de)$ if there exist a compact metric space $(Z,\de_Z)$ and distance preserving maps $\iota_i, \iota: X_i, X \rightarrow Z$ such that $\iota_i(X_i)$ converges in Hausdorff sense to $\iota(X)$ in $Z$. The Gromov-Hausdorff distance $\de_{GH}(X_i, X)$ is  defined as the infimum of Hausdorff distances between $\iota_i(X_i)$ and $\iota(X)$ w.r.t. to all distance preserving maps $\iota_i, \iota$ and metric spaces $Z$.
Equivalently,  $(X_i, \de_i) \overset{\scriptscriptstyle{GH}}{\rightarrow} (X,\de)$ if there exists a sequence of $\epsilon_i$-isometries $\psi_i: X_i\rightarrow X$ such that $\epsilon_i\rightarrow 0$.  Existence of an $\epsilon$-isometry $\psi: X\rightarrow Y$ between compact metric spaces $X$ and $Y$ yields that  the Gromov-Hausdorff distance satisfies $\de_{GH}(X,Y)\leq 2\epsilon$.

Given a sequence of $\delta_i$-isometries $\psi_i: X_i \rightarrow X$ with $\delta_i\rightarrow 0$ a sequence of functions $f_i: X_i \rightarrow \R^m$ converges uniformily to a function $f:X\rightarrow \R^m$ if for every $\epsilon>0$ there exists $i_\epsilon\in \mathbb N$ such that $\left\|f_i(z_i)- f(z)\right\|_{\R^m}\leq \epsilon$ for points $z_i\in X_i$ and $z\in X$ with  $\de_{Z}(\psi_i(z_i), z)\leq \delta_i$ and $i\geq i_\epsilon$.

The next proposition is Gromov's Arzela-Ascoli theorem for functions on a Gromov-Hausdorff converging sequence (for instance see \cite{saa}).

\begin{proposition} Let $(X_i, \de_i)$ be compact metric spaces that converge in GH sense to a compact metric space $(X,\de)$, and let $f_i: X_i \rightarrow \R^m$ be functions that are $L$-Lipschitz and uniformly bounded. Then there exists a subsequence of $f_i$ that converges uniformly to an $L$-Lipschitz function $f: X\rightarrow \R^m$.
\end{proposition}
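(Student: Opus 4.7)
The plan is to mimic the classical Arzel\`a--Ascoli argument, transferring the problem from each $X_i$ to the limit space $X$ via the almost-isometries that GH-convergence provides. First I would fix $\epsilon_i$-isometries $\psi_i : X_i \to X$ with $\epsilon_i \downarrow 0$. Compactness of $X$ furnishes a countable dense subset $\{x_k\}_{k\in\N}\subset X$, and for each $k$ the definition of $\epsilon_i$-isometry lets me choose $x_k^i \in X_i$ with $\de(\psi_i(x_k^i), x_k) \leq \epsilon_i$. Because $\diam X_i$ is uniformly bounded and each $f_i$ is $L$-Lipschitz, the oscillation of $f_i$ on $X_i$ is uniformly controlled; since the $f_i$ in the paper's applications are distance-type functions, which are in particular uniformly pointwise bounded, the set $\{f_i(x_k^i)\}_i \subset \R^m$ is bounded for every $k$.

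Then I would run a standard diagonal extraction. For each $k$, Bolzano--Weierstrass gives a convergent subsequence of $(f_i(x_k^i))_i$, and diagonalising produces a single subsequence (still denoted $f_i$) along which $f_i(x_k^i)\to \hat f(x_k)$ in $\R^m$ for every $k\in\N$. The almost-isometry estimate $|\de_i(x_k^i, x_\ell^i) - \de(x_k, x_\ell)|\leq 3\epsilon_i$ combined with the Lipschitz bound yields
\begin{align*}
\|\hat f(x_k) - \hat f(x_\ell)\| \leq \liminf_{i\to\infty} L\,\de_i(x_k^i, x_\ell^i) = L\,\de(x_k, x_\ell),
\end{align*}
so $\hat f$ is $L$-Lipschitz on the dense set $\{x_k\}$ and extends uniquely to an $L$-Lipschitz function $f : X\to \R^m$.

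Finally I would verify uniform convergence in the sense defined just above the proposition. Given $\eta>0$, compactness of $X$ yields a finite subfamily $x_{k_1},\dots, x_{k_n}$ whose balls of radius $\eta/(4L)$ cover $X$. Choose $i_0$ so large that, for $i \geq i_0$, both $\epsilon_i < \eta/(12L)$ and $\|f_i(x_{k_j}^i) - f(x_{k_j})\| < \eta/4$ for every $j\leq n$. For any admissible pair $(z_i,z)\in X_i\times X$ with $\de(\psi_i(z_i),z)\leq \epsilon_i$, pick $j$ such that $\de(z,x_{k_j})<\eta/(4L)$, use the $\epsilon_i$-isometry to estimate $\de_i(z_i, x_{k_j}^i) \leq \de(z, x_{k_j}) + 3\epsilon_i$, and apply the triangle inequality together with the $L$-Lipschitz bounds on $f_i$ and $f$ to conclude $\|f_i(z_i)-f(z)\|<\eta$.

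The argument is genuinely just a diagonal extraction plus a density argument; the only real care needed is in the final uniform estimate, where one must track the almost-isometry constants $\epsilon_i$ simultaneously with the density radius. The implicit pointwise boundedness required to start the diagonal extraction is harmless in the intended applications, where each $f_i$ is a boundary distance function on a uniformly bounded space.
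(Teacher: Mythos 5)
Your proof is correct, and the constants in the final estimate do check out (with the $\eta/(4L)$-net, $\epsilon_i<\eta/(12L)$ and the $3\epsilon_i$ distortion bound one gets $\|f_i(z_i)-f(z)\|<\eta$ exactly as you claim). For comparison: the paper does not prove this proposition at all — it states it as Gromov's Arzel\`a--Ascoli theorem and refers to the literature — so your diagonal extraction over a countable dense set, followed by the Lipschitz extension and the net argument tailored to the paper's definition of uniform convergence along a GH-sequence, is precisely the standard argument the cited reference would supply, and it is the natural self-contained route.

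One point deserves emphasis, and you half-noticed it yourself: some uniform pointwise bound is genuinely needed to start the Bolzano--Weierstrass step, and it is absent from the statement as written. Taking $f_i\equiv (i,0,\dots,0)$, which is $0$-Lipschitz, shows the proposition is false for arbitrary $L$-Lipschitz maps into $\R^m$; so either one adds the hypothesis that $f_i(x_1^i)$ (equivalently, $\sup_{X_i}\|f_i\|$, since $\diam X_i$ is uniformly bounded by GH-convergence to a compact limit) stays bounded, or one restricts to the intended applications, where $f_i$ is built from distance functions such as $\de_{\Omega_i^c}$ and takes values in $[0,\diam X_i]$. Your appeal to the applications is an honest patch, but it is cleaner to state the extra hypothesis explicitly rather than to let the proof silently prove a slightly different proposition; with that hypothesis added, every step of your argument is sound.
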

These considerations motivate the following definitions. 

Let $X$ and $Y$ be compact metric spaces such that $\de_{GH}(X, Y)<r$. Then it is easy to see that there exist  $2r$-isometries $\psi: X\rightarrow Y$ and $\phi: Y\rightarrow X$. 
\begin{definition}[Uniform distance]\label{def:ud}For functions $f: X\rightarrow \R^m$ and $g: Y\rightarrow \mathbb R^m$ we define
\begin{align*} 
\sup\big\{ \left\|f(x)- g(y)\right\|_{\R^m}: x\in X, y\in Y\ \mbox{s.t.} \ \de_X(\psi(x), y))\leq 2r  \big\}=: \mathcal S_{\psi}(f,g).
\end{align*}
The uniform distance between $f$ ang $g$ is then defined via
\begin{align*}
\inf_{(\psi, \phi)} \max \left\{ \mathcal S_\psi(f,g), \mathcal S_\phi(f,g)\right\}=: \de^{\star}(f,g)
\end{align*}
where the infimum is taken w.r.t.  any pair $(\psi, \phi)$ such that $\psi: X \rightarrow Y$ and  $\phi: Y\rightarrow X$ are $2r$-isometries for $r>\de_{GH}(X,Y)$.
\end{definition}
By definition we have $\de^\star(f,g)= \de^\star(g,f)$,  and $\de_{GH}(X,Y)+ \de^\star(f,g)=0$ if and only if $X\simeq Y$ and {$f=g$ pointwise as functions on $X\simeq Y$}. Moreover, for compact metric spaces $X, Y$ and $Z$, and continuous functions $f: X\rightarrow \R^m$, $g: Y\rightarrow \R^m$ and $h: Z\rightarrow \R^m$ we have
\begin{align*}
\de^{\star}(f,h) \leq  \de^\star(f,g) +  \de^\star(g,h).
\end{align*}
\begin{lemma}
 Consider compact metric spaces $(X_i, \de_i)$ for $i\in \mathbb N$ and $(X,\de)$ such that $\de_{GH}(X_i, X)<r_i \rightarrow 0$. Then  $f_i: X_i\rightarrow \R^m$ converges uniformly to $f:X\rightarrow \R$ if and only if $\de^{\star}(f_i, f)\rightarrow 0$.
\end{lemma}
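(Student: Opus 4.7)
The lemma is a translation between two ways of expressing uniform convergence of functions on a Gromov-Hausdorff converging sequence of compact metric spaces: one tethered to a chosen sequence of approximate isometries $\psi_i$, the other coordinate-free through the infimum $\de^\star$. I would prove the two implications separately, both via elementary triangle inequalities combined with the uniform continuity of $f$ on the compact space $X$.

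For the direction $(\Rightarrow)$, assume $f_i\to f$ uniformly with respect to a sequence of $\delta_i$-isometries $\psi_i:X_i\to X$ with $\delta_i\to 0$. Since $\de^\star$ is an infimum over pairs, it suffices to exhibit one convergent pair. Enlarging $r_i$ to $\max(r_i,\delta_i/2)\to 0$ if necessary, I may assume $\psi_i$ is itself a $2r_i$-isometry. Next I would construct a near-inverse $\phi_i:X\to X_i$ by selecting, for each $y\in X$, some $\phi_i(y)\in X_i$ with $\de_X(\psi_i(\phi_i(y)),y)\leq \delta_i$; a routine check shows $\phi_i$ is a $C\delta_i$-isometry for some absolute constant $C$, hence a $2r_i$-isometry after a further harmless enlargement of $r_i$. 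Fix $\epsilon>0$: compactness of $X$ yields $\eta>0$ via uniform continuity of $f$ so that $\|f(y)-f(y')\|_{\R^m}\leq \epsilon/2$ whenever $\de_X(y,y')\leq \eta$, while uniform convergence yields $i_\epsilon$ with $\|f_i(z_i)-f(\psi_i(z_i))\|_{\R^m}\leq \epsilon/2$ for $i\geq i_\epsilon$ (apply the definition with $z=\psi_i(z_i)$, which trivially satisfies $\de_X(\psi_i(z_i),z)=0\leq\delta_i$). For $i$ so large that $2r_i+2\delta_i\leq \eta$, both $\mathcal S_{\psi_i}(f_i,f)$ and $\mathcal S_{\phi_i}(f_i,f)$ are bounded by $\epsilon$: in the first case $\de_X(\psi_i(x),y)\leq 2r_i\leq \eta$ directly, and in the second, pushing through $\psi_i$ yields $\de_X(y,\psi_i(x))\leq \de_X(y,\psi_i\phi_i(y))+\de_X(\psi_i\phi_i(y),\psi_i(x))\leq 2\delta_i+2r_i\leq \eta$. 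Triangle inequality then gives $\de^\star(f_i,f)\leq \epsilon$ for $i$ large, so $\de^\star(f_i,f)\to 0$.

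For the direction $(\Leftarrow)$, assume $\de^\star(f_i,f)\to 0$. By definition of the infimum, I can select for each $i$ a pair $(\psi_i,\phi_i)$ of $2r_i'$-isometries with $r_i'>\de_{GH}(X_i,X)$, hence $r_i'\to 0$, such that $\max\{\mathcal S_{\psi_i}(f_i,f),\mathcal S_{\phi_i}(f_i,f)\}\to 0$. Setting $\delta_i:=2r_i'\to 0$, each $\psi_i$ is a $\delta_i$-isometry, and the estimate $\mathcal S_{\psi_i}(f_i,f)\leq \epsilon$ for $i$ large says precisely that $\|f_i(z_i)-f(z)\|_{\R^m}\leq\epsilon$ whenever $\de_X(\psi_i(z_i),z)\leq \delta_i$, which is the paper's definition of uniform convergence of $f_i$ to $f$ with respect to the sequence $\psi_i$.

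The main technical care lies in the forward direction, specifically in constructing the near-inverse $\phi_i$ and tracking the composition $\psi_i\circ\phi_i$, which picks up a $\delta_i$-error at each application; the rest is routine bookkeeping with triangle inequalities and uniform continuity. I do not foresee any genuine obstacle beyond this constant tracking.
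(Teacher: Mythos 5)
Your proof is correct and proceeds along the same definition-unpacking route as the paper; in particular your $(\Leftarrow)$ direction (choose near-optimal pairs of approximate isometries realizing $\de^{\star}(f_i,f)\to 0$ and read off the tethered definition of uniform convergence) is exactly the paper's argument. The difference is in the $(\Rightarrow)$ direction, which the paper dispatches with the single sentence ``the definition of uniform convergence implies $\de^{\star}(f_i,f)\rightarrow 0$'': you supply the actual content, namely the near-inverse $\phi_i$ and, more importantly, the bridge between the $\delta_i$-threshold appearing in the definition of uniform convergence and the possibly larger $2r$-threshold appearing in $\mathcal S_{\psi_i}$, which you close by uniform continuity of $f$ on the compact space $X$. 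Be aware that continuity of $f$ is not among the lemma's stated hypotheses, so strictly you prove a slightly weaker statement; but some such regularity is genuinely needed for that threshold mismatch (the paper silently ignores it), and it is automatic in every use of the lemma in the paper, where $f$ arises via Gromov's Arzela-Ascoli theorem as a uniform limit of $L$-Lipschitz functions and is therefore Lipschitz. The one remaining imprecision in your write-up -- asserting that near-optimal pairs in the infimum defining $\de^{\star}$ can be taken with parameters $r_i'\to 0$ -- is not forced by the bare definition of the infimum, but the paper's own proof makes exactly the same implicit choice (``there exists a sequence of $2r_i$-isometries $\psi_i$ such that\ldots''), so this reflects the looseness of Definition \ref{def:ud} rather than a gap specific to your argument.
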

\begin{proof} 
Let $\epsilon>0$, then we can pick $i_\epsilon>0$ such that $\de^{\star}(f_i, f)\leq \epsilon$ for $i\geq i_\epsilon$. In particular, there exists a sequence of $2r_i$-isometries $\psi_i:X_i \rightarrow X$ such that 
\begin{align*}
\left\| f_i(x_i)- f(x)\right\|_{\R^m} \leq \epsilon \ \ \forall x_i\in X_i, x\in X \mbox{ with } \de(\psi_i(x_i), x)\leq 2r_i \ \forall i\geq i_\epsilon.
\end{align*}
Hence, $f_i$ converges uniformly to $f$. 
On the other hand, the definition of uniform convergence implies $\de^{\star}(f_i, f)\rightarrow 0$.
\end{proof}
%
%

Let $i=1,2$. Given families of open sets $\Omega_{i,\alpha}\subset X_i$, $\alpha=1, \dots, m$ such that $\Omega_{i,\alpha}^c$ is connected for all $\alpha$ and $\de_{X_i}(\Omega_{i, \alpha}^c, \Omega_{i, \beta}^c)=\inf_{x\in \Omega_{i,\alpha}^c, y\in \Omega_{i, \beta}^c}\de(x,y)>0$ for $\alpha \neq \beta$,  we consider $\Omega_i = \bigcap_{l=1}^m \Omega_{i,l}$ and  $f_i=(\de_{\Omega^c_{i,1}}, \dots, \de_{\Omega^c_{i,m}}): X_i \rightarrow \R^m$. Then we define
\begin{align}
\mathcal D(\Omega_1, \Omega_2) := \de^{\star}(f_1, f_2).
\end{align}

A sequence of compact metric measure spaces $(X_i, \de_i, \m_i)$ converges in measured Gromov-Hausdorff sense to a compact metric measure spaces $(X,\de,\m)$   if  $(X_i, \de_i) \overset{\scriptscriptstyle GH}{\rightarrow} (X,\de)$ and $\m_i$ converges to $\m$ in duality with $C_b(Z)$ where $(Z, \de_Z)$ is a metric space where GH convergence is realized. A distance that metrizes measured GH convergence is given for instance by Sturm's \textit{tranportation distance} $\mathbb D$ \cite{stugeo1}. Actually $\mathbb D$ is a distance on the set of isomorphism classes $[X]$ of metric measure spaces $X$ with finite measure $\m_X$. But after normalisaton of $\m_X$, that is replacing $\m_X$ with ${\m_X(X)}^{-1} \m_X= \bar \m_X$,  we can see $\mathbb D$ is a distance on the family of normalized metric measure spaces. Then $\mathbb D$ is estimated by the $L^2$-Wasserstein distance $\de_W^Z(\bar \m_i, \bar \m)$ in $Z$ between the normalisations of $\m_i$ and $\m$.

A sequence of functions $f_i\in L^2(\m_i)$ converges in $L^2$-weak sense to $f\in L^2(\m)$ if $f_i \m_i \rightarrow f\m$ in duality with $C_{b}(Z)$ and $\sup_{i\in \mathbb N} \left\| f_i \right\|_{L^2(\m_i)}<\infty$. If $$\lim_{i\rightarrow \infty} \left\| f_i\right\|_{L^2(\m_i)} = \left\| f\right\|_{L^2(\m)}$$ holds, then one says the sequence $f_i$ converges $L^2$-strongly to $f$. 

\begin{lemma}  Let $f_i, g_i\in L^2(\m_i)$ such that $f_i\rightarrow f\in L^2(\m)$, $g_i\rightarrow g\in L^2(\m)$ $L^2$-strongly. Then
\begin{enumerate}
\item $f_i+g_i$ converges $L^2$-strongly to $f+g$. 
\smallskip
\item $\int |f_i g_i| d\m_i \rightarrow \int |g f| d\m$.
\end{enumerate}
\end{lemma}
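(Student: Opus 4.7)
The plan is to reduce both claims to one core fact: if $f_i\to f$ and $g_i\to g$ both $L^2$-strongly, then the bilinear pairing converges, $\int f_i g_i\, d\m_i \to \int fg\, d\m$. Once this ``strong$\times$strong'' convergence is in hand, part (1) follows by expanding the squared norm, and part (2) follows by first passing to absolute values.

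For part (1), weak convergence of $(f_i+g_i)\m_i$ to $(f+g)\m$ in duality with $C_b(Z)$ is immediate from linearity of the pairing. For the norm convergence, expand
\[
\|f_i+g_i\|_{L^2(\m_i)}^2 = \|f_i\|_{L^2(\m_i)}^2 + 2\int f_i g_i\, d\m_i + \|g_i\|_{L^2(\m_i)}^2 .
\]
The first and third terms converge to $\|f\|_{L^2(\m)}^2$ and $\|g\|_{L^2(\m)}^2$ by hypothesis, and the middle term converges to $2\int fg\,d\m$ by the core fact above. Adding these gives $\|f_i+g_i\|_{L^2(\m_i)}^2 \to \|f+g\|_{L^2(\m)}^2$, which upgrades the weak convergence to strong.

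For part (2), I would first verify that $|f_i| \to |f|$ strongly (and likewise for $g_i$). Weak convergence $|f_i|\m_i \to |f|\m$ follows from the general principle that strong $L^2$-convergence implies convergence in measure along subsequences (plus uniform $L^2$ bounds to rule out concentration), so that continuous functions composed with $f_i$ still converge in duality with $C_b(Z)$. The norm identity $\||f_i|\|_{L^2(\m_i)} = \|f_i\|_{L^2(\m_i)} \to \|f\|_{L^2(\m)} = \||f|\|_{L^2(\m)}$ is automatic. Applying the core fact to the nonnegative sequences $|f_i|$ and $|g_i|$ then yields $\int |f_i g_i|\, d\m_i = \int |f_i|\,|g_i|\, d\m_i \to \int |f|\,|g|\,d\m$.

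The main obstacle is the core fact itself. I would prove it by density: for $\varepsilon>0$, choose $\tilde g$ the restriction to $X$ of a function in $C_b(Z)$ with $\|g-\tilde g\|_{L^2(\m)}<\varepsilon$, and split
\[
\int f_i g_i\, d\m_i - \int fg\, d\m = \int f_i(g_i - \tilde g)\, d\m_i + \Bigl(\int f_i \tilde g\, d\m_i - \int f\tilde g\, d\m\Bigr) + \int f(\tilde g - g)\, d\m .
\]
The last term is bounded by $\varepsilon \|f\|_{L^2(\m)}$ via Cauchy--Schwarz. The middle term tends to $0$ because $f_i\m_i \to f\m$ weakly in duality with $C_b(Z)$ and $\tilde g$ is continuous and bounded. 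The first term is bounded by $\sup_i\|f_i\|_{L^2(\m_i)}\,\|g_i-\tilde g\|_{L^2(\m_i)}$, whose first factor is finite by strong convergence of $f_i$, and whose second factor tends to $\|g-\tilde g\|_{L^2(\m)}<\varepsilon$ because strong convergence of $g_i$ and trivial strong convergence of $\tilde g$ (to itself) imply, via part (1) applied with signs reversed, strong convergence of $g_i-\tilde g$ to $g-\tilde g$. Letting $\varepsilon\downarrow 0$ closes the argument. The delicate point is that $f$ and $g$ live on the limit space $X$ whereas $f_i,g_i$ live on different spaces $X_i$; all identifications go through the ambient realization $Z$ of the mGH convergence, and the density of (restrictions of) $C_b(Z)$ in $L^2(\m)$ is what makes the approximation step legitimate.
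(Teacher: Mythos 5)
The paper itself states this lemma without proof, treating it as standard background on $L^2$-convergence along mGH-converging sequences, so there is no in-paper argument to compare with; your architecture (a ``strong$\times$strong'' pairing convergence as the core fact, expansion of the squared norm for (1), reduction to $|f_i|,|g_i|$ for (2), and a $C_b(Z)$-density argument for the core fact) is the natural one. However, as written your argument is circular: you derive (1) from the core fact, and inside the proof of the core fact you invoke (1) ``with signs reversed'' to get $\limsup_i\|g_i-\tilde g\|_{L^2(\m_i)}\le\|g-\tilde g\|_{L^2(\m)}$. The fix is immediate and should be made explicit: expand
\begin{align*}
\|g_i-\tilde g\|_{L^2(\m_i)}^2=\|g_i\|_{L^2(\m_i)}^2-2\int g_i\,\tilde g\,d\m_i+\int \tilde g^2\,d\m_i,
\end{align*}
where the first term converges by strong convergence of $g_i$, the second because $g_i\m_i\to g\m$ in duality with $C_b(Z)$ and $\tilde g\in C_b(Z)$, and the third because $\m_i\to\m$ in duality with $C_b(Z)$ and $\tilde g^2\in C_b(Z)$. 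This uses only weak convergence, so the core fact and then (1) are obtained in the correct logical order.

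The genuinely incomplete step is $|f_i|\to|f|$ (and $|g_i|\to|g|$) $L^2$-strongly. The norm identity is trivial, but the weak part $|f_i|\m_i\to|f|\m$ in duality with $C_b(Z)$ cannot be justified by ``convergence in measure along subsequences'': the functions $f_i$ live on different spaces $X_i$, so convergence in measure has no direct meaning here, and composing with continuous functions does not interact with duality against $C_b(Z)$ in the way you assert without further work. A self-contained repair: by the uniform $L^2$-bounds, every subsequence of $(|f_i|)$ has a further subsequence converging $L^2$-weakly to some $h\in L^2(\m)$; since $|f_i|\pm f_i\ge 0$ and weak limits are linear and positivity-preserving, $h\ge |f|$ $\m$-a.e., while lower semicontinuity of the norm under weak convergence gives $\|h\|_{L^2(\m)}\le\lim_i\||f_i|\|_{L^2(\m_i)}=\||f|\|_{L^2(\m)}$, forcing $h=|f|$ and, in passing, convergence of the norms, i.e. strong convergence; alternatively quote the standard stability of $L^2$-strong convergence under composition with Lipschitz functions (Ambrosio--Honda). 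With these two repairs your proof is complete and is the standard argument for this lemma.
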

A sequence of functions $f_i\in W^{1,2}(X_i)$ converges $H^{1,2}$-weakly to $f\in W^{1,2}(X)$ if $(f_i)$ converges $L^2$-weakly to $f$ and $\int_X\left| \nabla f_i\right|^2 d\m_i<\infty$. The sequence $(f_i)$ converges $H^{1,2}$-strongly if $(f_i)$ converges $L^2$-strongly and $$\lim_{i\rightarrow \infty} \int_X|\nabla f_i|^2 = \int_X|\nabla f|^2 d\m.$$
%
%
\subsection{Stability and almost rigidity results}
\begin{lemma}\label{lem:AA} Let $K\in \mathbb R$ and $N\in (1,\infty)$. 
Let  $(X_i, \de_i, \m_i)_{i\in \mathbb N}$, be a sequence of $\RCD(K,N)$ spaces that converges in measured Gromov-Hausdorff sense to a compact $\RCD(K,N)$ space $(X, \de, \m)$, and let $\Omega_i\subset X_i$ be open sets.  Then, $-\de_{\Omega_i^c}: X_i\rightarrow \mathbb R$ subconverges in Arzela-Ascoli sense to a $1$-Lipschitz function $u:X\rightarrow \R$ such that $|\nabla u|=1$ {$\m$-a.e.} on $\Omega= u^{-1}((-\infty,0))$ and $\Omega^c=u^{-1}(\{0\})\neq \emptyset$.
Moreover $u=-\de_{\Omega^c}$ if $\Omega\neq \emptyset$. Otherwise $u\equiv 0$.
\end{lemma}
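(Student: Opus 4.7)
The plan is to combine Gromov's Arzela--Ascoli theorem with a geodesic extraction argument and the identification of the minimal relaxed gradient with the local slope for Lipschitz functions on $\RCD$ spaces.

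I would begin by noting that each $-\de_{\Omega_i^c}$ is $1$-Lipschitz on $X_i$ and uniformly bounded, since the diameters of $X_i$ are controlled by the mGH convergence to the compact space $X$. The Arzela--Ascoli theorem recalled above then yields a subsequence (still denoted $-\de_{\Omega_i^c}$) converging uniformly to a $1$-Lipschitz function $u: X \to \R$. Because each $-\de_{\Omega_i^c} \leq 0$, the limit satisfies $u \leq 0$, so setting $\Omega := u^{-1}((-\infty, 0))$ produces an open set with $\Omega^c = \{u = 0\}$ closed.

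The crux is identifying $u$ with $-\de_{\Omega^c}$. For $x \in \Omega$ I would pick approximating points $x_i \in X_i$ with $-\de_{\Omega_i^c}(x_i) \to u(x) < 0$; for $i$ large $x_i \in \Omega_i$ and there exist $y_i \in \partial \Omega_i$ realizing $\de_i(x_i, y_i) = \de_{\Omega_i^c}(x_i)$. Connecting $x_i$ to $y_i$ by unit-speed geodesics $\gamma_i$ and applying Arzela--Ascoli for curves in a common realization of the GH-convergence, a further subsequence produces a geodesic $\gamma : [0, -u(x)] \to X$ from $x$ to some limit point $y$ with $u(y) = \lim_i (-\de_{\Omega_i^c}(y_i)) = 0$, hence $y \in \Omega^c$. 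This yields $\de_{\Omega^c}(x) \leq -u(x)$. The reverse inequality is immediate from the $1$-Lipschitz property of $u$ together with $u \equiv 0$ on $\Omega^c$: for every $z \in \Omega^c$, $-u(x) = u(z) - u(x) \leq \de(x, z)$. So $u(x) = -\de_{\Omega^c}(x)$ on $\Omega$, and on $\Omega^c$ both sides vanish.

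Finally, I would handle the gradient bound on $\Omega$. Along the geodesic $\gamma$ constructed above, for every $t \in [0, -u(x))$ the point $y$ remains a nearest point in $\Omega^c$ to $\gamma(t)$, so the identity just established gives $u(\gamma(t)) = u(x) + t$. Testing the definition of the local slope $\operatorname{Lip}(u)(x)$ along $\gamma$ forces $\operatorname{Lip}(u)(x) \geq 1$, and the global $1$-Lipschitz bound yields $\operatorname{Lip}(u)(x) = 1$ for every $x \in \Omega$. Since $(X,\de,\m)$ is $\RCD(K,N)$ and $u$ is Lipschitz, the minimal relaxed gradient coincides with the local slope $\m$-a.e., giving $|\nabla u| = 1$ $\m$-a.e.\ on $\Omega$. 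The main technical step I anticipate is the geodesic extraction across different metric spaces $X_i$: this must be carried out inside a common realization $Z$ of the mGH-convergence (or via $\varepsilon_i$-isometries) before invoking equicontinuity and compactness, and one must verify that the limit endpoint $y$ indeed satisfies $u(y) = 0$ by using the uniform convergence of the functions $-\de_{\Omega_i^c}$ at the points $y_i \to y$.
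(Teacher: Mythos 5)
Your proof is correct and follows essentially the same route as the paper: Gromov's Arzela--Ascoli theorem, a limiting argument with minimizing geodesics from $x_i$ to $\partial\Omega_i$ performed in a common realization $Z$ of the convergence, and the identification of the minimal relaxed gradient with the local slope (which equals $1$ along the limit transport rays). The only small deviation is the inequality $-u\le \de_{\Omega^c}$ on $\Omega$, which you get directly from $u$ being $1$-Lipschitz and vanishing on $\Omega^c$, while the paper argues by contradiction by approximating a hypothetical closer point of $\Omega^c$ with points of $\Omega_i^c$; your variant is a harmless (indeed slightly cleaner) shortcut.
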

\begin{proof} 
The existence of a $1$-Lipschitz function $u: X\rightarrow \R$ that arises as  the limit of  a subsequence of $\de_{\Omega^c_i}$ is guaranteed by Gromov's Arzela-Ascoli theorem.  

We embed $(X_i, \de_i)$ and $(X,\de)$ into a metric space $(Z,\de_Z)$ where measured Gromov-Hausdorff convergence is realized.
Assume $\Omega=u^{-1}((-\infty,0))\neq \emptyset$. Then we pick $x\in \Omega$ and a sequence of points $x_i\in X_i$ such that $x_i\in \Omega_i$ and $x_i\rightarrow x$ in $Z$. There exists a sequence of geodesics $\gamma_i: [-L_i,0] \rightarrow \Omega_i$ that are arclength parametrized such that $\gamma_i(-L_i)=x_i$, $u(\gamma_i(0))=0$ and $L_i=u(x_i)$. After extracting another subsequence $(\gamma_i)_{i\in \mathbb N}$ converges uniformily to a geodesic $\gamma: [-L,0]  \rightarrow X$ in $Z$ such that $\gamma(-L)=x$, $L=u(x)>0$ and $\gamma((-L,0))\subset \Omega$. It holds
\begin{align*}
u(\gamma(-L))- u(\gamma(0))= \de_{X}(x, \gamma(0)).
\end{align*}
Hence $\gamma$ is a transport geodesic of $u$ and $x$ is contained in the transport set $\mathcal{T}_u$. Hence $\Omega \subset \mathcal{T}_u$ and $\lip u =|\nabla u|=1$ on $\mathcal{T}_u$.

If we assume there exits $y\in \Omega^c$ such that $\de(x,y)<L$, then there exist $y_i\in \Omega_i^c$ such that $y_i\rightarrow y$ and $\de_i(x_i,y_i)\rightarrow \de(x,y)$. This would contradict the choice of $\gamma_i$ before. Hence $-u(x)=L=\de_{\Omega^c}(x)$. Hence $-u=\de_{\Omega^c}$.
\end{proof}
\begin{definition}[uniform domain] \label{def:undo} Let $X$ be a geodesic metric space.
An open subset $\Omega\subset X$ is called $(c,C)$-uniform if for any two points $x,y\in \Omega$ there exists a rectifiable curve $\gamma: [0,1] \rightarrow \Omega$  with $\gamma(0)=x$ and $\gamma(1)=y$ that  satisfies 
\begin{enumerate}\item $\de_{\Omega^c}(\gamma(t))\geq c \min\{ \de_X(x, \gamma(t)), \de_X(\gamma(t), y)\}\ \forall t\in [0,1]$, 
\item $
\mbox{length}(\gamma)\leq C \de_X(x,y)$.
\end{enumerate}
In particular, a $(c,C)$-uniform domain is connected. 
\end{definition}
\begin{lemma}\label{lem:uniformdomain}
Consider $X_i$, $X$, $\Omega_i$ and $\Omega$ as in  Lemma \ref{lem:AA}. If $\Omega_i$ is $(c,C)$-uniform for all $i\in \mathbb N$, then $\Omega$ is $(c,C)$-uniform. If $\Omega_i\neq \emptyset$ for all $i$, then $\Omega\neq \emptyset$.
\end{lemma}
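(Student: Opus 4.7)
The plan is to produce, for any two points $x,y\in\Omega$, a curve in $\Omega$ with the required length and interior-distance bounds by lifting the corresponding curves in the $\Omega_i$ through the measured Gromov-Hausdorff convergence and extracting a subsequential limit. Throughout I identify $X_i$ and $X$ with subsets of a common metric space $(Z,\de_Z)$ realizing the measured GH convergence, so that all statements of the form ``$\to$'' refer to $\de_Z$-convergence in $Z$.

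First, I would approximate the endpoints. Given $x,y\in\Omega=u^{-1}((-\infty,0))$, the uniform convergence $-\de_{\Omega_i^c}\to u$ provided by Lemma \ref{lem:AA} together with the continuity of $\de_X$ lets me pick points $x_i,y_i\in X_i$ with $x_i\to x$, $y_i\to y$, and $\de_{\Omega_i^c}(x_i)\to\de_{\Omega^c}(x)>0$, $\de_{\Omega_i^c}(y_i)\to\de_{\Omega^c}(y)>0$. Hence $x_i,y_i\in\Omega_i$ for all large $i$.

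Next, I apply the $(c,C)$-uniformity of $\Omega_i$ to obtain curves $\tilde\gamma_i:[0,1]\to\Omega_i$ with $\tilde\gamma_i(0)=x_i$, $\tilde\gamma_i(1)=y_i$, $\mathrm{length}(\tilde\gamma_i)\leq C\de_{X_i}(x_i,y_i)$, and the interior distance bound against $c$. Reparametrizing proportional to arclength on $[0,1]$, the resulting curves $\gamma_i$ are $L_i$-Lipschitz with $L_i=\mathrm{length}(\tilde\gamma_i)\leq C\de_{X_i}(x_i,y_i)$, which is uniformly bounded by roughly $C\de_X(x,y)+1$. By Gromov's Arzela--Ascoli theorem (applied in $Z$) I extract a subsequence of $\gamma_i$ converging uniformly to a Lipschitz curve $\gamma:[0,1]\to X$ with $\gamma(0)=x$ and $\gamma(1)=y$. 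Lower semicontinuity of length under uniform convergence yields $\mathrm{length}(\gamma)\leq\liminf_i\mathrm{length}(\gamma_i)\leq C\de_X(x,y)$.

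Finally, I pass the interior bound to the limit. For every $t\in[0,1]$, $\gamma_i(t)\to\gamma(t)$ in $Z$, so by uniform convergence of the distance functions
\begin{align*}
\de_{\Omega_i^c}(\gamma_i(t))\longrightarrow \de_{\Omega^c}(\gamma(t)),
\end{align*}
while continuity of $\de_Z$ gives $\de_{X_i}(x_i,\gamma_i(t))\to\de_X(x,\gamma(t))$ and analogously for $y$. Taking the limit of
\begin{align*}
\de_{\Omega_i^c}(\gamma_i(t))\geq c\min\bigl\{\de_{X_i}(x_i,\gamma_i(t)),\de_{X_i}(\gamma_i(t),y_i)\bigr\}
\end{align*}
produces the desired inequality for $\gamma$. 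In particular $\de_{\Omega^c}(\gamma(t))>0$ whenever $\gamma(t)\neq x$ and $\gamma(t)\neq y$, and at $t=0,1$ the endpoints lie in $\Omega$ by hypothesis; hence $\gamma([0,1])\subset\Omega$, proving $(c,C)$-uniformity. The only delicate step is the last one: one must be careful that the ``distance to the complement'' is continuous under mGH convergence of the ambient spaces, but this is exactly what is encoded in Lemma \ref{lem:AA}, so no additional argument is needed.
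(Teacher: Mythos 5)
Your proof is correct and follows essentially the same route as the paper: embed into a common metric space, transport the endpoints, use $(c,C)$-uniformity to build curves in $\Omega_i$, extract an Arzela--Ascoli limit, and pass the length bound (by lower semicontinuity) and the interior bound (by uniform convergence of the boundary distance functions) to the limit. The one thing you add that the paper glosses over is the reparametrization proportional to arclength, which is what actually delivers the uniform equicontinuity needed for Arzela--Ascoli; this is a welcome bit of rigor, not a detour.
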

\begin{proof}
Pick two points $x,y\in \Omega$ and $x_i, y_i\in \Omega_i$ such that $x_i\rightarrow x$ and $y_i\rightarrow x$ after embedding $X_i, X$ into a common metric space $Z$. 

Since $\Omega_i$ is $(c,C)$-uniform, there exists a sequence of rectifiable curves $\gamma_i: [0,1] \rightarrow \Omega_i$  that connects $x_i$ and $y_i$ and such that $\mbox{length}(\gamma_i)\leq C\de(x_i, y_i)$. We apply the Arzela-Ascoli theorem  to extract a subsequence that converges uniformily in $Z$ to a curve $\gamma: [0, 1]\rightarrow X$. Lower semi-continuity of the length implies  that $\gamma$ is rectifiable and
\begin{align*}
\mbox{length}(\gamma)\leq C\de(x,y)
\end{align*}
Moreover, uniform convergence of $\de_{\Omega_i^c}$ and  convergence of $\gamma_i$ implies $$\de_{\Omega^c}(\gamma(t))\geq c\min\{\de_X(x,\gamma(t)), \de_X(\gamma(t),y)\}.$$
Hence $\Omega$ is $(c,C)$-uniform.

The second claim is clear.
\end{proof}
\begin{theorem}\label{th:meancurvaturestability}
Consider $X_i$, $X$, $\Omega_i$, $\Omega$ as in Lemma \ref{lem:AA} such that $\Omega\neq \emptyset$. We set $u_i:= \de_{\Omega_i^c}|_{\Omega_i}$ and $u:=\de_{\Omega^c}|_{\Omega}$.  Assume $u_i$ satisfies 
\begin{align*}
{\bf \Delta}_{\Omega_i^{}} u_i\leq  (N-1) \frac{s'_{\frac{K}{N-1}, \frac{H_i}{N-1}}(u_i)}{s_{\frac{K}{N-1}, \frac{H_i}{N-1}}(u_i)}\m|_{\Omega_i^{}}
\end{align*} where $H_i\in \mathbb R$ with $H_i\rightarrow H$.
Then $u$ satisfies 
\begin{align*}
{\bf \Delta}_{\Omega^{}} u\leq (N-1) \frac{s'_{\frac{K}{N-1}, \frac{H}{N-1}}(u)}{s_{\frac{K}{N-1}, \frac{H}{N-1}}(u)}\m|_{\Omega^{}}.
\end{align*}
\end{theorem}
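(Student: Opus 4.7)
The strategy is to reformulate each hypothesis $\mathbf{\Delta}_{\Omega_i} u_i \le F_i(u_i)\,\m_i|_{\Omega_i}$ with $F_i(r):=(N-1)s'_{K/(N-1),H_i/(N-1)}(r)/s_{K/(N-1),H_i/(N-1)}(r)$ in its weak form (available by infinitesimal Hilbertianity): for every nonnegative $f\in\lip_c(\Omega_i)$,
\[
-\int \langle \nabla u_i,\nabla f\rangle\,d\m_i \;\le\; \int F_i(u_i)\,f\,d\m_i.
\]
By the argument of Lemma~\ref{lem:AA} applied to $\de_{\Omega_i^c}$ in place of its negative, after extracting a subsequence and embedding all $X_i$ and $X$ into a common metric space $Z$ realizing the mGH convergence, the $1$-Lipschitz functions $u_i=\de_{\Omega_i^c}$ converge uniformly to $u=\de_{\Omega^c}$, with $|\nabla u|=1$ on $\Omega=\{u>0\}$. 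I will then test the above inequality against a suitable sequence of test functions and pass to the limit.

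Given a nonnegative $\phi\in\lip_c(\Omega)$, I would fix $\epsilon>0$ with $\supp\phi\subset\{u\ge 2\epsilon\}$, a Lipschitz cutoff $\eta:\R\to[0,1]$ vanishing on $(-\infty,\epsilon/2]$ and identically $1$ on $[\epsilon,\infty)$, and a nonnegative, compactly supported Lipschitz extension $\bar\phi$ of $\phi$ to $Z$ via McShane. Setting $\phi_i:=\eta(u_i)\cdot\bar\phi|_{X_i}$ produces nonnegative elements of $\lip_c(X_i)$ with $\supp\phi_i\subset\{u_i\ge\epsilon/2\}\subset\Omega_i$ for $i$ large, and $\phi_i\to\phi$ uniformly. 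On the right-hand side, uniform convergence $u_i\to u$ keeps the argument inside a compact interval $[\epsilon/2,L]$ on which $s_{K/(N-1),H_i/(N-1)}$ is bounded away from zero for all large $i$, so that $F_i\to F$ uniformly there and consequently $F_i(u_i)\phi_i\to F(u)\phi$ uniformly on $Z$. Combined with the mGH convergence $\m_i\to\m$ in duality with $C_b(Z)$, this yields $\int F_i(u_i)\phi_i\,d\m_i\to\int F(u)\phi\,d\m$.

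The main obstacle is passing to the limit in $\int\langle\nabla u_i,\nabla\phi_i\rangle\,d\m_i$. For this I would invoke the Ambrosio--Honda stability theory for Sobolev functions along mGH convergent $\RCD$ sequences: the equi-Lipschitz sequences $u_i$ and $\bar\phi|_{X_i}$ converge $H^{1,2}$-weakly to $u$ and $\bar\phi|_X$, and because $|\nabla u_i|^2=1$ on $\{u_i\ge\epsilon/2\}\subset\Omega_i$ while $|\nabla u|^2=1$ on $\{u\ge\epsilon/2\}\subset\Omega$ and the masses of these sets converge, one upgrades $u_i\to u$ to $H^{1,2}$-strong convergence on any compact subset of $\Omega$. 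Expanding by the chain rule on $\supp\phi_i$, where $|\nabla u_i|^2=1$, yields
\[
\langle\nabla u_i,\nabla\phi_i\rangle \;=\; \eta'(u_i)\,\bar\phi|_{X_i} \;+\; \eta(u_i)\,\langle\nabla u_i,\nabla\bar\phi|_{X_i}\rangle.
\]
The first summand passes to the limit by uniform convergence of $\eta'(u_i)\bar\phi|_{X_i}$ and mGH convergence of measures; the second by the weak-strong pairing for gradients available in the stability framework, using $H^{1,2}$-strong convergence of $u_i$ together with $H^{1,2}$-weak convergence of $\bar\phi|_{X_i}$ (or vice versa). Together with the convergence of the right-hand side, these limits deliver
\[
-\int\langle\nabla u,\nabla\phi\rangle\,d\m \;\le\; \int F(u)\phi\,d\m \quad\text{for every nonnegative } \phi\in\lip_c(\Omega),
\]
which is exactly the claimed Laplace inequality for $u$. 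The technical heart of the argument is therefore the justification of $H^{1,2}$-strong local convergence of the distance functions $u_i$, which is where the special structure $|\nabla u_i|=1$ on $\Omega_i$ is used decisively.
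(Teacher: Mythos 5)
Your proposal is correct and follows essentially the same strategy as the paper: the decisive step in both is to upgrade the uniform convergence $u_i\to u$ to $H^{1,2}$-strong convergence by exploiting $|\nabla u_i|=1$ on $\Omega_i$ and $|\nabla u|=1$ on $\Omega$, reducing the energy convergence to a measure convergence $\m_i(\Omega_i)\to\m(\Omega)$, and then to pass to the limit in the weak formulation against $H^{1,2}$-convergent test functions, with the right-hand side handled by locally uniform convergence of $s'_{\kappa,\lambda_i}/s_{\kappa,\lambda_i}$.

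The one genuine difference is in the test-function construction. You build $\phi_i=\eta(u_i)\,\bar\phi|_{X_i}$ by hand from a cutoff in the value of $u_i$ and a McShane extension of $\phi$, and then need a chain-rule expansion plus a weak-strong pairing lemma (since $\bar\phi|_{X_i}$ only converges $H^{1,2}$-weakly, not strongly — its minimal relaxed gradient is only bounded above by the Lipschitz constant and the energies need not converge). The paper instead simply invokes the Ambrosio--Honda approximation lemma (their Lemma~2.10), which directly produces $\phi_i\in\lip(X_i)$ supported in small balls with $\phi_i\to\phi$ $H^{1,2}$-\emph{strongly}, so that a plain strong-strong pairing suffices. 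Your route works but is technically heavier; the paper's is cleaner for exactly this reason.

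Two small gaps worth flagging. First, you assert "the masses of these sets converge" but do not prove it. The inequality $\limsup_i\m_i(\Omega_i)\le\m(\Omega)$ is not automatic (the sets $\Omega_i$ are open and could spill over in the limit); the paper establishes it by testing against monotone approximations $\varphi^k\uparrow 1_{(\eta,\infty)}$ composed with $u_i$ and then letting $k\to\infty$, $\eta\downarrow 0$. The reverse inequality follows from lower semicontinuity of the Cheeger energy under $L^2$-strong convergence. Second, if you insist on the cutoff $\eta$ at a fixed level $\epsilon/2$, you should choose the level so that $\m(\{u=\epsilon/2\})=0$ (true for a.e.\ level) for the level-set mass convergence to hold; this is minor but should be stated.
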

\begin{proof} 
By measured Gromov-Hausdorff convergence there
exists a compact metric space $(Z,\de_Z)$, distance preserving maps $\iota_i, \iota: X_i, X \rightarrow Z$ and couplings $\pi_i$ between $\m_i$ and $\m$ such that $\de_Z(x,y)\leq \delta$ for $\pi_i$-almost every $(x, z)\in X_i\times X$ if $i\geq i_\delta$. Let $\phi\in C_b(Z)$ and define $g_i= \phi \cdot \de_{\Omega^c_i}$. Then $g_i$ converges uniformily to $g=\phi \cdot\de_{\Omega^c}$, and we can choose $i_\delta\in \mathbb N$ such that $|g_i(x_i) -g(x)|<\epsilon$ whenever $|x_i-x|\leq \delta$ and $i\geq i_\delta$.  Indeed, we observe
\begin{align*}
&|\phi(x) \cdot \de_{\Omega^c}(x) - \phi(y)\cdot \de_{\Omega_i^c}(y)|\\
&\leq |\phi(x)| |\de_{\Omega^c}(x) - \de_{\Omega^c_i}(y)|+ |\phi(x) -\phi(y)| \de_{\Omega_i}(y)\\
&\leq \sup_{z\in Z} |\phi(z)|  \epsilon + \epsilon \cdot \diam_{X_i}
\end{align*}
whenever $i\geq i_\delta$ is sufficiently large and $\de_Z(x,y)\leq \delta$.

It follows that 
\begin{align*} 
\left|\int g_i d\m_i - \int g d\m\right| = \int \left|g_i-g\right|d\pi_i \leq  \epsilon \ \mbox{ for }\  i\geq i_{\delta}.
\end{align*}
It follows that $\de_{\Omega_i^c}\m_i\rightarrow \de_{\Omega^c}\m$ in duality with $C_b(Z)$. 
Moreover 
\begin{align*} 
\left|\int \de_{\Omega^c_i}^2 d\m_i - \int \de_{\Omega^c}^2 d\m \right|= \int \left|2 (\de_{\Omega^c}- \de_{\Omega_i^c}) \de_{\Omega^c} + (\de_{\Omega^c}-\de_{\Omega_i^c})^2\right| d\pi_i \leq  2 \epsilon + \epsilon^2
\end{align*}if $i\geq j_{\delta}$
for $j_\delta\in \mathbb N$ sufficiently large. 
 Hence $\de_{\Omega_i^c}$ converges $L^2$-strongly to $\de_{\Omega^c}$.

Let $\varphi^k\in C_b(\R)$ be sequence of continuous functions such that $\varphi^k\uparrow 1_{[\eta,\infty)}$ pointwise for $\eta>0$.  One can  check that $h^k_i=\varphi^k\circ \de_{\Omega_i^c}\in C_b(X)$ converges uniformily to $h^k=\varphi^k\circ \de_{\Omega^c}$, and in particular there exists $i_\epsilon\in \mathbb N$ such that 
\begin{align*}
\int h^k_i d\m_i \leq  \int h^k d\m+\epsilon \leq \int 1_{[\eta,\infty)}\circ \de_{\Omega^c} d\m+ \epsilon = \m(\de_{\Omega^c}^{-1}([\eta, \infty)))+\epsilon
\end{align*}
for $i\geq i_\epsilon.$
For $k\rightarrow \infty$ we obtain $h^k_i \rightarrow 1_{[\eta, \infty)}\circ \de_{\Omega_i^c}=1_{\de_{\Omega_i^c}^{-1}([\eta,\infty))}$ and
\begin{align*}
\m_i(\de_{\Omega_i^c}^{-1}([\eta, \infty))) \leq \m(\de_{\Omega^c}^{-1}([\eta, \infty))) +\epsilon.
\end{align*}
Finally, we take $\eta\downarrow 0$, $i\rightarrow \infty$ and $\epsilon\downarrow 0$ in this order. It follows 
\begin{align*}
\limsup_{i\rightarrow \infty} \m_i(\Omega_i)\leq \m(\Omega).
\end{align*}
Corollary \ref{cor:constgrad} implies
\begin{align*}
\limsup_{i\rightarrow \infty}\int |\nabla \de_{\Omega^c_i}|^2 d\m_{i} =\limsup_{i\rightarrow \infty}\m_i(\Omega_i) \leq \m(\Omega)= \int |\nabla \de_{\Omega^c}|^2 d\m .
\end{align*}
Hence $\de_{\Omega_i^c}$ converges $H^{1,2}$-strongly to $\de_{\Omega^c}$.

Let $x\in \Omega$ be arbitrary. Then, there exists $\delta>0$ such that $B_\delta(x)\subset \Omega$ and there exists a sequence $x_i\in \Omega_i$ such that $x_i\rightarrow x$, $B_\delta(x_i)\subset \Omega_i$ and $\overline{B_\delta(x_i)}$ converges in Gromov-Hausdorff sense ot $\overline{B_\delta(x)}$. 

We recall the following lemma {\cite[Lemma 2.10]{ambrosio_honda}}.
\begin{lemma}
For any $\phi \in \lip(X)$ with $\supp \phi\subset B_\delta(x)$ there exists  a sequence $\phi_i\in \lip(X_i)$ with $\supp \phi_i\subset B_\delta(x_i)$ such that $\sup_{i} \lip \phi_i<\infty$ and $\phi_i$ converges $H^{1,2}$-strongly to $\phi$.
\end{lemma}
Hence,  given $\phi$ and $\phi_i$ as in the previous lemma $H^{1,2}$-strong convergence of $\de_{\Omega_i^c}$ to $\de_{\Omega^c}$ together with \eqref{rcdinnerproduct} yields
\begin{align*}
\int \langle \nabla \de_{S_i}, \nabla \phi_i\rangle d\m_{X_i} \rightarrow \int \langle \nabla \de_S, \nabla \phi\rangle d\m_{X}.
\end{align*}
Set $f_{K,N,H}=  {s'_{\frac{K}{N-1}, \frac{H}{N-1}}}/{s_{\frac{K}{N-1}, \frac{H}{N-1}}} $. Since $H_i\rightarrow H$, it follows
\begin{align*}
f_{K, N, H_i} \rightarrow f_{K,N,H}
 \ \mbox{locally uniformily.}
\end{align*}
Hence, the composition $f_{K,N,H_i}\circ \de_{\Omega_i^c}$ converges uniformly to $f_{K,N,H}\circ \de_{\Omega^c}$,
and hence $L^2$-strongly. Therefore
\begin{align*}
\int \phi_i  \frac{s'_{\frac{K}{N-1}, \frac{H_i}{N-1}}(u_i)}{s_{\frac{K}{N-1}, \frac{H_i}{N-1}}(u_i)}d\m_{X_i}\rightarrow 
\int \phi  \frac{s'_{\frac{K}{N-1}, \frac{H}{N-1}}(u)}{s_{\frac{K}{N-1}, \frac{H}{N-1}}(u)}d\m_X
\end{align*}
By locality of the distributional Laplacian, this implies the desired estimate.
\end{proof}{
\begin{jjj}
As the referee pointed out to the author that  a similar strategy as in the previous proof  is applied in \cite{aht_weyl} where it is proved that for sequences of uniformly continuous functions, $L^2$-convergence
and uniform convergence are equivalent.
\end{jjj}}
{Theorem \ref{th:meancurvaturestability}, Lemma \ref{lem:AA},  compactness of $\RCD$ spaces w.r.t. $\mathbb D$, the Arzela-Ascoli theorem and the definition of the uniform distance $\mathcal D$ imply the following compactness theorem.
\begin{corollary}\label{cor:com}
Given $K, H\in \R, N\in [1,\infty)$ and $D>0$ the family $\mathcal M(K,N, D, H)$ of pairs $(X,\Omega)$ for  a compact, normalized $\RCD(K,N)$ space $X$ with $\diam_X\leq D$ and an open subset $\Omega\subset X$  with $\partial \Omega$ having Laplace mean curvature bounded from below by $H$ is compact w.r.t. $\mathbb D + \mathcal D$ where $(\mathbb D + \mathcal D)((X,\Omega), (\tilde X, \tilde \Omega))= \mathbb D(X,\tilde X)+ \mathcal D(\Omega, \tilde \Omega)$.
\end{corollary}}

\begin{theorem} Let $\Gamma, D, c, C>0$,  $N>1$ and $m\in\mathbb N\backslash \{1\}$. For every $\epsilon >0$ there exists $\delta>0$ such that the following holds. 

Let  $X$ be a normalized $\RCD(-\delta, N)$ space with $\diam_X\leq D$ and let $\Omega_\alpha\subset X$ be open subsets $\Omega_\alpha\subset X$, $\alpha=1, \dots, m$, such that $\Omega_\alpha$ is $(c,C)$-uniform, $\Omega_\alpha$ has Laplace mean curvature bounded from $-\delta$ and $\de(\Omega_\alpha^c, \Omega_\beta^c)\geq \Gamma>0$ for  $\alpha\neq \beta$. Set $\Omega=\bigcap_{\alpha=1}^m \Omega_\alpha$.

Then, $m=2$ and there exist $D>0$, an $\RCD(0, N)$ space $Z$, an $\RCD(0,N-1)$ space $Y$ and  an open subset $\Omega'\subset Z$ such that
$(\tilde{\Omega}', \tilde \de_{\Omega'}, \m_Z|_{\Omega'}) \simeq Y\otimes [0, D]$ and 
\begin{align}\label{distances}
\mathbb D(X, Z)\leq \epsilon \ \ \ \mbox{ and } \ \ \ \ \mathcal D({\Omega},  {\Omega'})\leq \epsilon.
\end{align}
%
\end{theorem}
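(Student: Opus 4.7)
The plan is a compactness/contradiction argument that promotes Theorem~\ref{main1} to the quantitative almost-rigidity statement. Suppose the claim fails. Then there exist $\epsilon_0>0$, a sequence $\delta_i\downarrow 0$, normalized $\RCD(-\delta_i,N)$ spaces $X_i$ with $\diam X_i\le D$, and open $(c,C)$-uniform subsets $\Omega_{i,\alpha}\subset X_i$, $\alpha=1,\dots,m$, satisfying the hypotheses with $\delta=\delta_i$, yet for which no triple $(Z,Y,\Omega')$ as in the conclusion attains both $\mathbb D(X_i,Z)\le\epsilon_0$ and $\mathcal D(\Omega_i,\Omega')\le\epsilon_0$. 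By Gromov precompactness on the class of normalized $\RCD(K,N)$ spaces of diameter at most $D$ and stability of $\RCD$ bounds under measured Gromov--Hausdorff convergence (with $K_i=-\delta_i\uparrow 0$), a subsequence of $(X_i)$ converges to a normalized $\RCD(0,N)$ space $X_\infty$ with $\diam X_\infty\le D$. Realize the convergence in a common compact ambient space. The $1$-Lipschitz functions $\de_{\Omega_{i,\alpha}^c}$ then admit, by Gromov's Arzela-Ascoli theorem, uniformly convergent (diagonal) subsequences for each $\alpha$; by Lemma~\ref{lem:AA} the limits are precisely $\de_{\Omega_{\infty,\alpha}^c}$ for open sets $\Omega_{\infty,\alpha}\subset X_\infty$ on which $|\nabla\de_{\Omega_{\infty,\alpha}^c}|=1$.

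Next, I would transfer every hypothesis to $X_\infty$. Uniform convergence of the family $\de_{\Omega_{i,\alpha}^c}$ preserves the $\Gamma$-separation, so $\de(\Omega_{\infty,\alpha}^c,\Omega_{\infty,\beta}^c)\ge\Gamma>0$ for $\alpha\ne\beta$; in particular each $\Omega_{\infty,\alpha}^c$ is nonempty and the complements are pairwise disjoint. Lemma~\ref{lem:uniformdomain} promotes $(c,C)$-uniformity, hence connectedness, to each $\Omega_{\infty,\alpha}$. Theorem~\ref{th:meancurvaturestability} applied with $H_i=-\delta_i\to 0$ yields a Laplace mean curvature bound of $0$ for every $\partial\Omega_{\infty,\alpha}$ in $X_\infty$. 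Finally, $\partial\Omega_{\infty,2}$ is closed in the compact space $X_\infty$, hence compact. Theorem~\ref{main1} now applies and forces $m=2$, together with an $\RCD(0,N-1)$ space $Y$ and $D_0\ge\Gamma$ such that $(\tilde\Omega_\infty,\tilde\de_{\Omega_\infty},\m|_{\Omega_\infty})\simeq Y\otimes[0,D_0]$. If the parameter $m$ is $\ge 3$ this already delivers the desired contradiction, since such a configuration cannot exist in the limit.

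Setting $Z:=X_\infty$ and $\Omega':=\Omega_\infty$, the chosen subsequence satisfies $\mathbb D(X_i,Z)\to 0$ by the definition of $\mathbb D$ and mGH convergence, and $\mathcal D(\Omega_i,\Omega')\to 0$ by uniform convergence of the joint map $(\de_{\Omega^c_{i,1}},\dots,\de_{\Omega^c_{i,m}})$ together with the definition of $\mathcal D$ via the uniform distance $\de^\star$. Hence for $i$ sufficiently large both distances drop below $\epsilon_0$, contradicting the defining property of the sequence. The main step requiring care is the passage to the limit in the Laplace mean curvature inequality, which is exactly the content of Theorem~\ref{th:meancurvaturestability}; a second delicate point is preservation of $(c,C)$-uniformity under uniform convergence of distance functions (Lemma~\ref{lem:uniformdomain}), which prevents the limit domain from disconnecting and legitimizes the application of Theorem~\ref{main1}.
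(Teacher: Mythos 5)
Your proposal is correct and follows essentially the same route as the paper: a contradiction/compactness argument using Gromov precompactness and $\RCD$ stability, uniform (Arzela--Ascoli) convergence of the boundary distance functions via Lemma \ref{lem:AA}, preservation of mean curvature bounds (Theorem \ref{th:meancurvaturestability}) and of $(c,C)$-uniformity (Lemma \ref{lem:uniformdomain}), and then the rigidity/splitting conclusion in the limit, with $\mathbb D$ and $\mathcal D$ small for large $i$ giving the contradiction. The only cosmetic difference is that you invoke Theorem \ref{main1} as a black box where the paper re-runs the argument of Lemma \ref{lem:harmonic} and the splitting theorem, which amounts to the same thing.
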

\begin{proof}
We assume, there exists a sequence of $\RCD(-\frac{1}{i}, N)$ spaces $X_i$ with subsets $\Omega_{\alpha, i}$ that satisfy the assumptions in the theorem but fail the second claim in \eqref{distances} for $\epsilon>0$. 

By stability and compactness of the class of $\RCD$ spaces w.r.t. measured GH convergence there exists an $\RCD(0,N)$ space $Z$ such that a subsequence of $X_i$, that by abuse of notation we also call $X_i$, converges in measured Gromov-Hausdorff sense to $Z$.  Hence, there exists $i_\epsilon\in \mathbb N$ such that $\mathbb D(X,Z)<\epsilon$ for $i\geq i_\epsilon$. After extracting another subsequence $\de_{\Omega^c_{i, \alpha}}$ converges uniformly to $\de_{\Omega^c_{\alpha}}$ for open subsets $\Omega_\alpha\subset Z$, $\alpha=1, \dots, k$ where $k\leq m$. By Theorem \ref{th:meancurvaturestability} $\de_{\Omega_{\alpha}^c}|_{\Omega_\alpha} =: u^\alpha$ satisfies
\begin{align*}
{\bf \Delta}_{\Omega^{}} u^\alpha\geq 0
\end{align*}
i.e. $\Omega_\alpha$ has Laplace mean curvature bounded from below.

By Lemma \ref{lem:uniformdomain} $\Omega_\alpha$ is a $(c,C)$-uniform domain and in particular connected. Hence $\Omega=\bigcap_\alpha \Omega_\alpha$ is connected. Moreover $\de(\Omega_\alpha^c, \Omega_\beta^c)\geq \Gamma$ for all $\alpha\neq \beta$. As in Lemma \ref{lem:harmonic} we derive that $k=2$ and that $u^\alpha$ is harmonic on $\Omega$. Hence, $(\tilde \Omega, \tilde \de_{\Omega}, \m_\Omega)$ is isomorphic to $Y\otimes [0,D]$ for an $\RCD(0,N-1)$ space $Y$. 

On the other hand, uniform convergence of $\de_{\Omega_{\alpha,i}^c}$ to $\de_{\Omega_\alpha^c}$ for all $\alpha=1, \dots, m$ implies $m=2$ and
\begin{align*} 
\mathcal D(\Omega_i, \Omega)\leq \epsilon
\end{align*}
for $i$ sufficiently large by definition of $\mathcal D$. This is a contradiction.
\end{proof}

Very similarly one can prove the following result which is an almost rigidity statement that corresponds to the main rigidity theorem in \cite{bkmw}.

\begin{theorem} Let $D, c, C>0$ and $N>1$. For every $\epsilon >0$ there exists $\delta>0$ such that the following holds. 

Let $X$ be a normalized $\RCD(-\delta, N)$ space with $\diam_X\leq D$ and let $\Omega$ be  open and $(c,C)$-uniform with  Laplace mean curvature bounded from below by $N-1-\delta$.  Assume there exists $x\in \Omega$ such that $\de_{\Omega^c}(x)\geq 1-\delta$.

Then, there exists an $\RCD(0, N)$ space $Z$, an $\RCD(N-2,N-1)$ space $Y$ and  an open subset $\Omega'\subset Z$ such that
$(\tilde{\Omega}', \tilde \de_{\Omega'}, \m_Z|_{\Omega'})$ is isomorphic to $Y\times^{N-1}_{r} [0, 1]$ and 
\begin{align*}
\mathbb D(X,Z)\leq \epsilon \ \ \ \mbox{ and } \ \ \ \ \mathcal D(\Omega, \Omega')\leq \epsilon.
\end{align*}
%
\end{theorem}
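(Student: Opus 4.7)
The plan is to argue by contradiction, paralleling the proof of the previous almost rigidity theorem in the paper. Suppose the statement fails: there exist $\epsilon_0>0$ and sequences $\delta_i\downarrow 0$, compact $\RCD(-\delta_i,N)$ spaces $X_i$, $(c,C)$-uniform open sets $\Omega_i\subset X_i$ whose boundaries have Laplace mean curvature bounded from below by $N-1-\delta_i$, and points $x_i\in \Omega_i$ with $\de_{\Omega_i^c}(x_i)\geq 1-\delta_i$, for which no admissible triple $(Z,Y,\Omega')$ satisfies both $\mathbb D(X_i,Z)\leq \epsilon_0$ and $\mathcal D(\Omega_i,\Omega')\leq \epsilon_0$. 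By stability and precompactness of normalized $\RCD(K,N)$ spaces with uniformly bounded diameter under measured Gromov-Hausdorff convergence, one may pass to a subsequence along which $X_i\to Z$ in mGH sense for some compact $\RCD(0,N)$ space $Z$.

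Along a further subsequence, Lemma \ref{lem:AA} gives uniform convergence of $-\de_{\Omega_i^c}$ to $-\de_{\Omega^c}$ for an open subset $\Omega\subset Z$, and Lemma \ref{lem:uniformdomain} ensures $\Omega$ is $(c,C)$-uniform, hence connected. The distinguished points $x_i$ converge in the ambient realisation space to some $x_\infty\in Z$ at which the limit function satisfies $\de_{\Omega^c}(x_\infty)\geq 1$; since $\de_{\Omega^c}(x_\infty)>0$, necessarily $x_\infty\in \Omega$, so $\Omega$ is non-empty. The version of Theorem \ref{th:meancurvaturestability} needed here is a mild generalisation in which both the ambient lower bound $K_i=-\delta_i$ and the mean curvature bound $H_i=N-1-\delta_i$ vary with $i$; the proof goes through unchanged once one observes that $(K,H)\mapsto s'_{K/(N-1),H/(N-1)}/s_{K/(N-1),H/(N-1)}$ is jointly continuous on the compact range in question, so that the composition $f_{K_i,N,H_i}\circ \de_{\Omega_i^c}$ converges locally uniformly, hence $L^2$-strongly, to $f_{0,N,N-1}\circ \de_{\Omega^c}$. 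This yields that $\partial \Omega$ has Laplace mean curvature bounded from below by $N-1$ in the $\RCD(0,N)$ space $Z$. Combined with the inscribed radius bound $\de_{\Omega^c}\leq 1$ recalled in the introduction from \eqref{ineq:laplesti} applied to $K=0$, $H=N-1$ and attributed to \cite{bkmw}, one concludes that the maximal inscribed radius of $\Omega$ equals $1$ and is attained at $x_\infty$.

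At this point the main rigidity theorem of \cite{bkmw} applies to $\Omega\subset Z$: because $Z$ is $\RCD(0,N)$, $\partial\Omega$ has Laplace mean curvature bound $N-1$, and the inscribed radius attains its maximal value $1$, there exists an $\RCD(N-2,N-1)$ space $Y$ with $(\tilde \Omega,\tilde \de_\Omega,\m_Z|_\Omega)\simeq Y\times^{N-1}_r [0,1]$. Thus the triple $(Z,Y,\Omega)$ is admissible; by construction $\mathbb D(X_i,Z)\to 0$, and the lemma identifying uniform convergence with convergence in the uniform distance $\de^\star$ gives $\mathcal D(\Omega_i,\Omega)\to 0$. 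For $i$ large both quantities are below $\epsilon_0$, contradicting the hypothesised failure and closing the argument. The principal obstacle I foresee is the stability step: Theorem \ref{th:meancurvaturestability} is formulated with a fixed ambient curvature bound $K$, whereas here $K_i=-\delta_i$ itself degenerates. One must check that its $L^2$- and $H^{1,2}$-convergence machinery survives this simultaneous variation; fortunately it depends only on mGH convergence of the ambient spaces (to an $\RCD(0,N)$ limit) together with uniform convergence of the boundary distance functions, so the joint continuity of $f_{K,N,H}$ in $(K,H)$ is all that is additionally required.
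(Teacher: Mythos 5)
Your argument is correct and is essentially the proof the paper intends: the paper only remarks that this theorem is proved ``very similarly'' to the preceding almost rigidity theorem, namely by the same contradiction--compactness scheme (mGH convergence to an $\RCD(0,N)$ limit $Z$, uniform convergence of $\de_{\Omega_i^c}$, preservation of $(c,C)$-uniformity, stability of the Laplace mean curvature bound with $H_i\to N-1$), followed by the main rigidity theorem of \cite{bkmw} applied to the limit domain attaining inscribed radius $1$. Your extra care about letting $K_i=-\delta_i$ vary in Theorem \ref{th:meancurvaturestability} is a point the paper glosses over, and your resolution via joint continuity of $(K,H)\mapsto s'_{K/(N-1),H/(N-1)}/s_{K/(N-1),H/(N-1)}$ is exactly what is needed.
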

\appendix
\section{Stability of constant mean curvature sets}\label{53}

The definition of  Laplace mean curvature bounds motivates us to  say that the boundary $\partial \Omega$ of an open subset $\Omega$ in a compact $\RCD$ space $X$ is a  {\it generalized CMC hypersurface with curvature $H\in \R$ (a generalized minimal hypersurface if $K=0$)} if $\m(\partial \Omega)=0$ and the signed distance function $\de_{\partial \Omega}:= \de_{\overline \Omega}- \de_{\Omega^c}$ satisfies 
\begin{align}\label{ineq:min}
{\bf \Delta}_{\Omega} (\de_{\partial \Omega})\geq - (N-1) \frac{s'_{\frac{K}{N-1}, \frac{H}{N-1}}(-\de_{\partial \Omega})}{s_{\frac{K}{N-1}, \frac{H}{N-1}}(-\de_{\partial \Omega})}\m|_{ \Omega} \ \mbox{ on } \Omega
\end{align}
and 
\begin{align}\label{ineq:min2}
{\bf \Delta}_{X\backslash \overline\Omega} (-\de_{\partial \Omega})\geq - (N-1) \frac{s'_{\frac{K}{N-1}, \frac{-H}{N-1}}(\de_{\partial \Omega})}{s_{\frac{K}{N-1}, \frac{-H}{N-1}}(\de_{\partial \Omega})}\m|_{X\backslash \overline \Omega} \ \mbox{ on }X\backslash \overline \Omega.
\end{align}
By symmetry in $\Omega$ and $(\Omega^c)^\circ$, $\partial \Omega$ has constant mean curvature $H$ if and only if $\partial \Omega^c$ has constant mean curvature $-H$.

When $\Omega$ is  a subset with smooth boundary in a Riemannian manifold with  Ricci curvature bounded from below by  $K$ \eqref{ineq:min} and \eqref{ineq:min2} are equivalent to $\partial \Omega$ being a CMC hypersurface, as recently discussed  in \cite{ms21} for $K=0$.  In nonsmooth setting one can find examples that satisfy these estimates for every $H\in [-1,1]$ (Example \ref{ex:ex}). Therefore it is suggested by the authors in \cite{apps} to say the boundary of $\Omega$ has {\it a mean curvature barier $H$.} We will adapt this in the following.

For stability of this notion we encounter the following problem: The uniform limit of a signed distance functions $\de_{\partial \Omega_i}$ on $\RCD(K,N)$ spaces $X_i$ may not be a signed distance function of a set $\Omega$ with $\m(\partial \Omega)=0$.
But assuming a uniform inner/outer ball condition for $\Omega$ {(Definition \ref{def:innerouter})} one can prove the following lemma. 

\begin{lemma}\label{lemma:a} Let $K\in \mathbb R$, $N\in (1,\infty)$ and $\delta>0$.
Let  $(X_i, \de_i, \m_i)_{i\in \mathbb N}$, be a sequence of $R\CD(K,N)$ spaces that converges in measured Gromov-Hausdorff sense to a compact metric measure space $(X, \de, \m)$, and let $\Omega_i\subset X_i$ be open sets with $\m_i(\partial S)=0$ that satisfy a $\delta$-uniform outer/inner ball condition. Set $\partial \Omega_i=S_i$.  Then, $\de_{S_i}: X_i\rightarrow \mathbb R$ subconverges in Arzela-Ascoli sense to a $1$-Lipschitz function $u:X\rightarrow \R$ that is the signed distance function of  $\partial \Omega$ with $\Omega= u^{-1}((-\infty,0))$.
\end{lemma}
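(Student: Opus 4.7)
The plan is to combine Gromov's Arzel\`a-Ascoli theorem with the observation that the uniform inner/outer ball condition survives passage to the limit. The $1$-Lipschitz functions $\de_{S_i}\colon X_i\to\R$ are uniformly bounded (since $\diam_{X_i}$ stays controlled under mGH convergence to a compact limit), so Gromov's Arzel\`a-Ascoli theorem produces a subsequence converging uniformly to a $1$-Lipschitz $u\colon X\to\R$. Set $\Omega := u^{-1}((-\infty,0))$, which is open by continuity.

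The core task is to verify $u = \de_{\overline\Omega} - \de_{\Omega^c}$, and, en route, that $u^{-1}(0) = \partial\Omega$. First I would pass the uniform inner/outer ball condition to the limit. Given $y \in u^{-1}(0)$, pick $x_i \to y$; then $\de(x_i,S_i) = |\de_{S_i}(x_i)| \to |u(y)| = 0$, so any nearest boundary point $y_i \in S_i$ of $x_i$ also satisfies $y_i \to y$. The outer balls $\overline{B_\delta(p_i)} \subset \Omega_i^c$ tangent at $y_i$ and the inner balls $\overline{B_\delta(q_i)} \subset \overline{\Omega_i}$ tangent at $y_i$ subconverge in an ambient realization to balls $\overline{B_\delta(p)}$ and $\overline{B_\delta(q)}$ with $\de(y,p)=\de(y,q)=\delta$. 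For any $z \in B_\delta(q)$, approximants $z_i \in B_\delta(q_i)\subset\Omega_i$ satisfy $\de(z_i,S_i)\geq \delta-\de(z_i,q_i)$, so $\de_{S_i}(z_i)\leq -(\delta-\de(z_i,q_i))$, and in the limit $u(z)\leq -(\delta-\de(z,q))<0$; hence $B_\delta(q)\subset\Omega$. Symmetrically $B_\delta(p)\subset X\setminus\overline\Omega$. Consequently every neighborhood of $y$ meets both $\Omega$ and $X\setminus\overline\Omega$, and since $u(y)=0$ excludes $y\in\Omega$, we obtain $y\in\partial\Omega$.

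Next I would establish the signed-distance formula. For $x$ with $u(x)<0$, choose $x_i\to x$; eventually $\de_{S_i}(x_i)<0$, so $x_i\in\Omega_i$ and $\de(x_i,S_i)\to -u(x)$. Any subsequential limit $y$ of realizing boundary points $\tilde y_i\in S_i$ satisfies $\de(x,y)=-u(x)$ and $u(y)=0$, hence by the previous step $y\in\partial\Omega\subset\Omega^c$, giving $\de(x,\Omega^c)\leq -u(x)$. The reverse inequality follows from the $1$-Lipschitz property: any $z\in\Omega^c$ has $u(z)\geq 0$, so $\de(x,z)\geq u(z)-u(x)\geq -u(x)$. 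Thus $-\de_{\Omega^c}(x) = u(x)$, and $\de_{\overline\Omega}(x)=0$, matching the signed-distance formula. The case $u(x)>0$ is symmetric, now using the outer balls to produce approximants in $\Omega_i^c$; for $u(x)=0$ both $\de_{\overline\Omega}(x)$ and $\de_{\Omega^c}(x)$ vanish by the paragraph above.

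The main obstacle is the symmetric direction at points with $u(x)>0$: one must know that $x$ admits approximants in $\Omega_i^c$ and that the realizing boundary points $\tilde y_i\in S_i$ of $x_i$ converge to a genuine point of $\partial\Omega$ rather than to some degenerate region where $u\equiv 0$ on a set of positive $\m$-measure. The uniform outer ball condition is exactly what prevents such pathologies: wherever $u>0$ the approximating complements $\Omega_i^c$ are locally nondegenerate of radius $\delta$, and the limiting nearest point produced above is automatically forced into $u^{-1}(0)=\partial\Omega$ by the inner-ball argument of the second paragraph. The rest is bookkeeping: the extracted subsequence is the one promised by the statement.
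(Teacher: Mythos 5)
Your proposal is correct and follows essentially the same route as the paper: after Gromov--Arzel\`a--Ascoli, the decisive step in both arguments is to pass the $\delta$-uniform inner/outer ball condition to the limit so as to produce points with $u<0$ and $u>0$ arbitrarily close to any point of $u^{-1}(0)$, forcing $u^{-1}(0)=\partial\Omega$ and hence the signed-distance identification (the paper does this via the crossing geodesics the balls provide, you via the limit balls themselves). The only cosmetic difference is that the paper delegates the identification of $u$ on $\{u<0\}$ and $\{u>0\}$ to its earlier Lemma \ref{lem:AA}, whereas you rederive it directly from nearest boundary points and the $1$-Lipschitz bound.
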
 
\begin{definition}[Outer and inner ball condition]\label{def:innerouter}
Let $(X,\de)$ be a metric space.
Let $\Omega\subset X$ and $\partial \Omega =S$. We say $S$ satisfies an outer ball condition in  $x\in S$ if there exists $r_{\red x}>0$ and $p_x\in \Omega^c$ such that $\de(x,p_x)=r_{\red x}$ and $B_{r_{\red x}}(p_x)\subset \Omega^c$. 
{We say $S$ satisfies an outer ball condition if it satisfies an exterior ball condition in every $x\in S$. Moreover $S$ satisfies a uniform $\delta$-outer ball condition if there exists $\delta>0$ such that $r_x\geq \delta$ for all $x\in S$.}

Similar, $\Omega$ satisfies an inner (uniform $\delta$-inner) ball condition if the previous definition holds with $\Omega^c$ replaces with $\Omega$.
\end{definition}
\begin{proof}[Proof of Lemma \ref{lemma:a}] Form the Lemma \ref{lem:AA} we conclude that $\de_{S_i}$ subconverges uniformly to a function $u$ such that $u= - \de_{\Omega_1^c}$ on $\Omega_1$ and $u= \de_{\Omega_2}^c$ on ${\Omega_2}$ where $\Omega_1=u^{-1}((-\infty, 0))$ and $\Omega_2=u^{-1}((0,\infty))$. 

We only have to show $\partial \Omega_1=\partial \Omega_2= u^{-1}(\{0\})$. By symmetry we only have to prove the first equality. For that we set $\Omega_1=\Omega$.
We know that $\partial \Omega \subset u^{-1}(0)$. Pick $x\in u^{-1}(0)$. Then, there exist $x_i$ with $\de_{S^i}(x_i)=0$ such that $x_i\rightarrow x$. Since $\Omega_i$ satisfies a $\delta$-uniform outer/inner ball condition there exist geodesics $\gamma_i: [-\delta,\delta] \rightarrow X_i$ with $\gamma_i(0)=x_i$, $\gamma_i([-\delta, 0)) \subset \Omega$ and $\gamma_i((0,\delta])\subset \Omega^c_i$. Moreover $\gamma_i$ converges uniformily to geodesic $\gamma: [-\delta,\delta]\rightarrow X$ with $\gamma([-\delta, 0))\subset u^{-1}((-\infty, 0))$ and $\gamma((0,\delta])\subset u^{-1}((0,\infty))$. Hence $x\in \partial \Omega$.
\end{proof}
\begin{theorem}\label{th:min}
Let $K\in \R$, $D, \eta>0$ and $N\in [2,\infty)$. For $\epsilon>0$ there exists $\delta>0$ such that the following holds. 

Let $X_i$ be a sequence of $\RCD(K,N)$ spaces with $\diam_X\leq D$ and let $\Omega_i\subset X_i$ be open subsets that satisfy a $\eta$-uniform inner-outer ball condition and such that $\partial \Omega_i$ have a mean curvature  barrier $H\in \R$ in the sense of \eqref{ineq:min} and \eqref{ineq:min2}.

Then, there exists a measured GH converging subsequence of $X_i$ with a limit $\RCD(K,N)$ space $X$ such that a subsequence of $\de_{\partial \Omega_i}$ uniformly converges to $\de_{\partial \Omega}$ for an open subset $\Omega$ in $X$ that has a mean curvature barrier $H$.
\end{theorem}
\begin{proof}[Proof of Theorem \ref{th:min}]
The Theorem follows now from stability of mean curvature bounds together with the previous lemma.
\end{proof}
\begin{remark} \label{rem:effective}{
In general CMC hypersurfaces don't satisfy an effective $\delta$-uniform outer/inner ball condition with $\delta$ only depending on geometric information of $X$ and the mean curvature $H$. Counter-examples are families of catenoids in $\mathbb R^3$ with increasingly big second fundamental form.  On the other hand a regularity theory for perimeter minimizing sets and for isoperimetric sets in the context of $\RCD$ spaces was developped in recent work by Mondino and Semola \cite{ms21},  and Antonelli, Pasqualetto, Pozzetta and Semola \cite{apps}. 
 }
\end{remark}
\begin{example}\label{ex:ex} In the following we give two  examples:
\label{examp}
(1) 
The first example was suggested to the author by Daniele Semola. One can consider the metric (measure) space that is the result of gluing together two copies of $\overline{B_1(0)}\subset \R^2$  along their boundaries. This doubling space $X$ has Alexandrov curvature bounded from below by $0$ and is therefore an $\RCD(0,2)$ space by theorems of Perelman-Petrunin \cite{p, pg, palvs} and Lytchak-Stadler \cite{lytchakstadler}. There is an isometric copy of ${B_1(0)}=\Omega$ inside of $X$ such that $\Omega^c= \overline{B_1(0)}$ and $\partial \Omega \simeq \partial \Omega^c\simeq \partial B_1(0)=: S$. Then  $S$ has Laplace mean curvature bounded from below by $1$, seen both as boundary of $\Omega$ and  as boundary of $\Omega^c$.  Hence $S$ has a mean curvature barrier $H$ for every $H\in [-1, 1]$ in the sense that \eqref{ineq:min} and \eqref{ineq:min2} hold for every $H\in [-1, 1]$.  In particular, it is a generalized minimal surface because one can choose $H=0$.
{The space $X$ can be obtained  as a limit of smooth Riemannian manifolds $M_i$, and the distance function $\de_{\partial \Omega}$ as the limit of distance functions on $M_i$ corresponding to smooth domains $\Omega_i\subset M_i$. More precisely, as consequence of the proof of the double space theorem in smooth context  $X$ can be constructed as the $C^0$-limit of Riemannian spheres with curvature bounded from below by $0$, and $\Omega$ is the $\mathcal D$-limit of balls with constant mean curvature $H$ for a given  $H\in [-1,1]$. }

(2)
{Another example   the referee suggested is the double space  $X$ of two copies $D_1$ and $D_2$ of a convex domain $D$ with smooth boundary  in $\R^2$ such that the second fundamental form of $\partial D$ is non-negative and not necessarily positive.  Again $X$ is an $\RCD(0,2)$ space. In this situation $\partial D_1\simeq \partial D_2=S$ is Laplace mean convex  as the boundary of  $D_1$ but also  as the boundary of $D_2$. Hence, $S\subset X$ has a mean curvature barrier $0$.  Again one may obtain $X$ and $S$ as the limit of smooth Riemannian manifolds with Ricci curvature bounded from below and as the $\mathcal D$-limit of smooth minimal hypersurfaces, respectively. 
The same construction also works in higher dimensions. }

{The hypersurfaces presented in (1) and (2) are  not minimal hypersurfaces in the classical sense or locally perimeter minimizing in the sense of \cite{ms21}.
But they are  "equatorial" inside of the ambient space and may  emerge as the solution of a variational problem,  for instance a min-max problem, like the equator in a sphere of constant curvature.
}
\end{example}
\small{
\bibliography{new}

\bibliographystyle{amsalpha}}
\end{document}